\newcommand{\R}{\mathbb{R}}
\newcommand{\Ha}{\mathcal{H}}
\newcommand{\finmasscur}{\mathbf{M}}
\DeclareMathOperator{\Tan}{Tan}
\DeclareMathOperator{\ap}{ap}
\DeclareMathOperator{\dil}{dil}
\DeclareMathOperator{\dist}{dist}
\DeclareMathOperator{\dia}{diam}
\DeclareMathOperator{\Mass}{\mathbf{M}}
\DeclareMathOperator{\ald}{\ap\dil}
\DeclareMathOperator{\set}{set}
\DeclareMathOperator{\supp}{supp}
\DeclareMathOperator*{\esssup}{ess\,sup}
\newcommand{\N}{{\mathbb N}}
\newcommand{\Lip}{\mathrm{Lip}}
\newcommand{\lebmeas}{{\mathcal{L}}}
\numberwithin{equation}{section}
\newtheorem{theorem}{Theorem}[section]
\newtheorem{lemma}[theorem]{Lemma}
\newtheorem{proposition}[theorem]{Proposition}
\newtheorem{definition}[theorem]{Definition}
\newtheorem{corollary}[theorem]{Corollary}
\newtheorem{remark}[theorem]{Remark}
\title{Semicontinuity of eigenvalues under intrinsic flat convergence}
\begin{document}

\author{Jacobus W. Portegies}
\address{Max Planck Institute for Mathematics in the Sciences, Leipzig, Germany}
\email{jacobus.portegies@mis.mpg.de}

\begin{abstract}
We use the theory of rectifiable metric spaces to define a Dirichlet energy of Lipschitz functions defined on the support of integral currents. This energy is obtained by integration of the square of the norm of the tangential derivative, or equivalently of the approximate local dilatation, of the Lipschitz functions. We define min-max values based on the normalized energy and show that when integral current spaces converge in the intrinsic flat sense without loss of volume, the min-max values of the limit space are larger than or equal to the upper limit of the min-max values of the currents in the sequence. In particular, the infimum of the normalized energy is semicontinuous. On spaces that are infinitesimally Hilbertian, we can define a linear Laplace operator. We can show that semicontinuity under intrinsic flat convergence holds for eigenvalues below the essential spectrum, if the total volume of the spaces converges as well.
\end{abstract}

\maketitle

\section{Introduction}

Riemannian manifolds come with a natural definition of a Laplace operator, which is involved in describing heat flow or diffusion on the manifold.
The properties of the Laplace operator, such as its eigenvalues and eigenfunctions, give some information on the geometry of the manifold, and vice versa (see for instance \cite{berard_spectral_1986}, \cite{berger_spectre_1971} and \cite{barnett_spectral_2012}). In this context it is natural to ask whether the spectra of the Laplace operator on the manifolds converge when the manifolds do.

It is easy to see that the spectrum of the Laplace operator varies continuously under $C^2$-convergence of manifolds. As for weaker types of convergence, Fukaya has constructed examples demonstrating that under Gromov-Hausdorff convergence, the eigenvalues of the Laplace operator need not be continuous \cite{fukaya_collapsing_1987}. 
One way to retrieve continuity is to keep track of a measure on the space as well, that is, to consider Fukaya's metric measure convergence rather than just Gromov-Hausdorff convergence. Fukaya showed that with uniform bounds on the sectional curvature and the diameter, the eigenvalues of the Laplace operator on manifolds are continuous under measured Gromov-Hausdorff convergence \cite{fukaya_collapsing_1987}. 
He also conjectured that the assumption of uniform bounds on the sectional curvature could be replaced by a lower bound on the Ricci curvature. He proved that without curvature bounds, the eigenvalues are still upper semicontinuous \cite{fukaya_collapsing_1987}.

Cheeger and Colding proved Fukaya's conjecture, in that they showed that with a uniform lower bound on the Ricci curvature, a Laplace operator can be defined on the limit space, and the eigenvalues and eigenfunctions are continuous \cite{cheeger_structure_2000}. Moreover, these limit spaces are almost everywhere Euclidean.

Rather than considering metric measure convergence, we will study what happens to the eigenvalues of the Laplace operator when the underlying manifolds converge in the intrinsic flat sense. 

Ambrosio and Kirchheim showed how one can define currents on metric spaces \cite{ambrosio_currents_2000}, following an idea by De Giorgi \cite{de_giorgi_general_1995}. Sormani and Wenger \cite{sormani_intrinsic_2011} applied their work to define an intrinsic flat distance between oriented Riemannian manifolds of finite volume (and more generally, between integral current spaces). The flat distance was originally introduced by Whitney for submanifolds of Euclidean space, and extended to integral currents by Federer and Fleming \cite{federer_normal_1960}. Similar to how the Gromov-Hausdorff distance relates to the Hausdorff distance, the intrinsic flat distance between two manifolds is determined as the infimum of the flat distance between isometric embeddings of the manifolds taken over all isometric embeddings into all possible common metric spaces.

There are important differences between metric-measure convergence and intrinsic flat convergence. The metric measure limit of compact metric measure spaces is always compact, while this need not be the case under intrinsic flat convergence. Limits obtained under intrinsic flat convergence are always rectifiable.
Moreover, there are examples of sequences of spaces that do not have a Gromov-Hausdorff limit, but do have an intrinsic flat limit \cite{sormani_intrinsic_2011}. 

Sequences of manifolds that converge in the intrinsic flat sense occur naturally, since by Wenger's compactness theorem for integral currents on metric spaces \cite{wenger_compactness_2011}, a sequence of integral current spaces with bounded diameter, mass and mass of the boundary has a subsequence converging in the intrinsic flat sense. In other words, a sequence of oriented Riemannian manifolds with boundary, with uniform bounds on the volumes and the volumes of the boundary, will have a subsequence converging in the intrinsic flat sense to an integral current space.

The following theorem presents our first main result formulated for closed, oriented Riemannian manifolds, instead of more general integral currents.

\begin{theorem}
\label{th:main}
Let $M_i$ ($i=1,2,\dots$) and $M$ be closed, oriented Riemannian manifolds such that as $i \to \infty$, $M_i$ converges to $M$ in the intrinsic flat sense.
Moreover, assume that $\mathrm{Vol}(M_i) \to \mathrm{Vol}(M)$. 
Then, for $k=1,2,\dots$,
\begin{equation}
\label{eq:semicont}
\limsup_{i \to \infty} \lambda_k(M_i) \leq \lambda_k(M),
\end{equation}
where $\lambda_k(\tilde{M})$ is the $k$th eigenvalue of the Laplace operator on a manifold $\tilde{M}$.
\end{theorem}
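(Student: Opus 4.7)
The plan is to deduce Theorem~\ref{th:main} directly from the abstract semicontinuity statement described in the abstract. First I would regard each closed oriented Riemannian manifold $M_i$, and the limit $M$, as an integral current space in the canonical way: the orientation together with the Riemannian volume defines an integer rectifiable current of finite mass whose mass measure coincides with the Riemannian volume measure. Since these manifolds are smooth, they are infinitesimally Hilbertian in the sense of the paper, so the linear Laplace operator is well defined; and since $M$ is closed, its spectrum is purely discrete, so the essential spectrum is empty and every $\lambda_k(M)$ falls under the regime of the eigenvalue semicontinuity result.

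The second step is to identify the normalized energy defined in the paper with the classical Rayleigh quotient on each $M_i$ and on $M$. The Dirichlet energy of a Lipschitz $f$ in the paper equals $\int |Df|^2 \, d\Mass$, with $|Df|$ the norm of the tangential derivative (equivalently, the approximate local dilatation). On a smooth Riemannian manifold, the approximate dilatation of a Lipschitz function agrees almost everywhere with $|\nabla f|$, so the generalized energy coincides with $\int_{\tilde M} |\nabla f|^2 \, d\mathrm{vol}$. By the classical Courant--Fischer principle, $\lambda_k(\tilde M)$ is the min-max of this Rayleigh quotient over $(k+1)$-dimensional subspaces of Lipschitz functions, and this min-max is precisely the one defined in the paper. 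Combined with the hypotheses of Theorem~\ref{th:main}, the semicontinuity result for eigenvalues below the essential spectrum then gives $\limsup_{i\to\infty} \lambda_k(M_i) \leq \lambda_k(M)$. Here one uses that intrinsic flat convergence is given, and that by the classical lower semicontinuity of mass one has $\Mass(M) \leq \liminf_i \Mass(M_i)$, so that the assumption $\mathrm{Vol}(M_i) \to \mathrm{Vol}(M)$ forces equality, which is precisely the ``convergence without loss of volume'' condition demanded by the abstract result.

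The main technical point to be verified along the way is the identification between the generalized Dirichlet energy on an integral current space, defined via the approximate local dilatation with respect to the underlying rectifiable structure, and the classical Dirichlet energy on a smooth Riemannian manifold; the subtlety is matching the rectifiable tangent spaces coming from the Ambrosio--Kirchheim theory with the usual Riemannian tangent spaces, so that the approximate dilatation of a Lipschitz function recovers $|\nabla f|$ almost everywhere, and the mass measure recovers the Riemannian volume. Once this identification is in place, the theorem reduces to Courant--Fischer together with the abstract semicontinuity result, and no further analysis of the sequence $(M_i)$ is required.
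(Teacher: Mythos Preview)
Your proposal is correct and follows essentially the same route as the paper: the paper explicitly states that Theorem~\ref{th:main} is the translation of Theorem~\ref{th:SemContHilb} to closed oriented Riemannian manifolds, and the identification of the generalized Dirichlet energy with the classical Rayleigh quotient is exactly the content of the Corollary following Proposition~\ref{pr:behaviormaps}. The only minor remark is that the paper's min-max (equation~\eqref{eq:minmax}) is phrased over $k$ orthonormal mean-zero functions rather than $(k+1)$-dimensional subspaces, but in the infinitesimally Hilbertian setting these formulations agree with the classical Courant--Fischer characterization of the $k$th Neumann eigenvalue, so no gap arises.
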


The precise definition of min-max values $\lambda_k(M)$ when $M$ is \emph{not} a Riemannian manifold, but an integral current, is given through a min-max variational problem involving a Dirichlet energy in (\ref{eq:minmax}). We show semicontinuity of $\lambda_k$ in Theorem \ref{th:SemContMinMaxFlat} under flat convergence without loss of volume and in Theorem \ref{Th:SemContFirst} we prove semicontinuity of $\lambda_k$ under intrinsic flat convergence without loss of volume. In particular, it follows that the infimum of the normalized energy is semicontinuous.

It follows from the work by Kirchheim \cite{kirchheim_rectifiable_1994} and Ambrosio and Kirchheim \cite{ambrosio_rectifiable_2000} that rectifiable currents $T$ with associated mass measures $\|T\|$ are concentrated on a set that in a measure-theoretic sense is locally Finsler, and Lipschitz functions $f$ on these sets are almost-everywhere tangentially differentiable, with tangential derivative $df$. 
Even for Finsler spaces, there does not seem to be a canonical choice of a Laplace operator, see for instance the works by Bao and Lackey \cite{bao_hodge_1996}, Shen \cite{shen_non-linear_1998}, Centore \cite{centore_mean-value_1998} and Barthelm\'{e} \cite{barthelme_natural_????}.
In Section \ref{se:NormEn}, we will use the local Finsler structure and the tangential derivatives of Lipschitz functions to define a Dirichlet energy of Lipschitz functions $f$ defined on an integral current $T$ by
\begin{equation}
E_T(f) = \int_X |df|^2 d\|T\|.
\end{equation}
We will also give an alternative expression in terms of an approximate local dilatation.
For Riemannian manifolds, this energy corresponds to the usual Dirichlet energy. 
A similar definition of the energy for instance appears in \cite{gromov_dimension_1988}. This definition also corresponds to the energy that is at the basis of the non-linear Laplacian as introduced by Shen \cite{shen_non-linear_1998}, when restricted to Finsler manifolds, except that our definition differs from the one used by Gromov \cite{gromov_dimension_1988} and Shen \cite{shen_non-linear_1998} in in that we will use the mass measure as introduced by Ambrosio and Kirchheim \cite{ambrosio_currents_2000}, which is the natural measure on an integral current space, rather than the Hausdorff measure. 
The energy will also turn out to equal the Cheeger energy \cite{cheeger_differentiability_1999} for the metric measure space $(X, d_X, \|T\|)$, as we will see from comparing with the definition by Ambrosio, Gigli and Savar\'{e} \cite{ambrosio_calculus_2014}.

After defining the energy $E_T$, we consider the completion of bounded Lipschitz functions under the Sobolev norm $\|.\|_{W^{1,2}(\|T\|)} = \|. \|_{L^2(\|T\|)} + E_T(.)$. 
In Theorem \ref{th:TangDiffSob} we show that if $T$ is an integral current on a $w^*$-separable dual Banach space $Y$, the objects in this completion are also $\|T\|$-a.e. tangentially differentiable, with a natural definition of the tangential derivative. 
We will use this to conclude that for an integral current $T$ on a complete metric space $X$, $W^{1,2}(\|T\|)$ can be interpreted in a natural way as a subset of $L^2(\|T\|)$. 
Moreover, we will show in Theorem \ref{th:LowSCEnergy} that the energy $E_T$ is lower semicontinuous. 
In fact, we show in Theorem \ref{th:ApdilIsMinRelaxGrad} that $f \in W^{1,2}(\|T\|)$ if and only if it has a minimal relaxed gradient in the sense of Ambrosio, Gigli and Savar\'{e} \cite{ambrosio_calculus_2014}, and that the norm of the tangential derivative is equal to the minimal relaxed gradient. 

An important ingredient in the proof is a Poincar\'{e}-like inequality, that is quite technical and is shown in Appendix \ref{se:Poincare}. 
As a by-product, we also record a decomposition theorem for one-dimensional integral currents in Appendix \ref{se:Decomposition}, that is a simpler version of a recent decomposition theorem by Paolini and Stepanov for one-dimensional normal currents \cite{paolini_decomposition_2012,paolini_structure_2013}.

When the integral currents involved are supported on an infinitesimally Hilbertian rectifiable metric space, we can define an unbounded (linear) self-adjoint operator on their rectifiable sets. 
The min-max values of these operators correspond to the $\lambda_k$ defined before, and consequently, all $\lambda_k$ below the essential spectrum correspond to eigenvalues of the operators.

The structure of this paper is as follows. In Section \ref{se:background}, we review preliminaries from the work by Kirchheim \cite{kirchheim_rectifiable_1994}, Ambrosio and Kirchheim \cite{ambrosio_currents_2000,ambrosio_rectifiable_2000}, and Sormani and Wenger \cite{sormani_intrinsic_2011}. In Section \ref{se:NormEn}, we introduce the energy and give two possible expressions for it. 
In Section \ref{se:DefSobolev}, we introduce the Sobolev space $W^{1,2}(\|T\|)$ for integral currents $T$, and we will show that every element in the space is ($\|T\|$-a.e.) tangentially differentiable. 
It follows that the space can be understood as a subset of $L^2(\|T\|)$.
In Section \ref{se:LowSemEnergy}, we show the lower semicontinuity of the energy, and we show that the approximate local dilatation corresponds to the minimal relaxed gradient as introduced by Ambrosio, Gigli and Savar\'{e} \cite{ambrosio_calculus_2014}.
Subsequently, we introduce the min-max functionals $\lambda_k$ in Section \ref{se:minmax}, and show their semicontinuity. 
We then show semicontinuity of the $\lambda_k$ under intrinsic flat convergence in Section \ref{se:SemInfEn}, and the semicontinuity of eigenvalues for infinitesimally Hilbertian spaces in Section \ref{se:InfHilb}. Finally, in Section \ref{se:example}, we show by example that we cannot remove the condition on convergence of the volumes. 
In the appendix, we show the decomposition theorem for one-dimensional integral currents, and derive a Poincar\'{e}-like inequality.

\section*{Acknowledgments} 

I would like to thank Carolyn Gordon for suggesting that one should investigate the behavior of eigenvalues under intrinsic flat convergence and thank Kenji Fukaya for suggesting to prove semicontinuity of the eigenvalues. I am grateful to Christina Sormani, who recommended the problem to me and provided feedback on a draft version of the manuscript. 
I would like to thank Larry Guth and Or Hershkovits for helpful discussions. 
I would like to thank my advisor, Fanghua Lin, for helpful conversations and for teaching me geometric measure theory. 
The main part of the manuscript was written at the Courant Institute, but it was finished at the Max Planck Institute for Mathematics in the Sciences. I would like to thank both institutes for their hospitaility.
I am indebted to the referee, as he / she had very helpful comments.

\section{Background}
\label{se:background}

In this section, we will summarize the properties of currents on metric spaces that we will use. Currents on metric spaces were introduced by Ambrosio and Kirchheim \cite{ambrosio_currents_2000}, following an idea by De Giorgi \cite{de_giorgi_general_1995}. 

\subsection{Currents on metric spaces}

We first review the main concepts from the paper by Ambrosio and Kirchheim on currents on metric spaces \cite{ambrosio_currents_2000}.

Let $Z$ be a complete metric space. For a positive integer $n$, let $\mathcal{D}^n(Z)$ denote the set of all $(n+1)$-tuples $\omega = (f , \pi_1, \dots, \pi_n)$, such that $f$ is bounded and Lipschitz and the $\pi_i$ are Lipschitz. 
In a smooth setting, $\omega$ would correspond to $f d\pi_1 \wedge \dots \wedge d\pi_n$. 

The exterior differential $d$ is an action that creates an $(n+2)$-tuple out of an $(n+1)$-tuple as follows
\begin{equation}
d( f, \pi_1, \dots, \pi_n ) 
= (1, f, \pi_1, \dots, \pi_n).
\end{equation}
If $\phi$ is a Lipschitz map from $Z$ to another metric space $E$, we define the pullback $\phi^\# \omega \in \mathcal{D}^n(Z)$ of $\omega \in \mathcal{D}^n(E)$ by
\begin{equation}
\phi^\# \omega 
= \phi^\# (f, \pi_1, \dots, \pi_n) 
= (f \circ \phi, \pi_1 \circ \phi, \dots, \pi_n \circ \phi).
\end{equation}

An $n$-dimensional metric functional is a function $T: \mathcal{D}^n(Z) \to \mathbb{R}$ that is subadditive and positively $1$-homogeneous with respect to $f$ and the $\pi_i$, $i=1, \dots, n$.

The boundary $\partial T$ of a metric functional $T$ is defined by
\begin{equation}
\partial T (\omega) = T ( d \omega ), \qquad \omega \in \mathcal{D}^n(Z).
\end{equation}
and the push-forward of $T$ under a map $\phi$ is given by
\begin{equation}
\phi_\# T (\omega) = T( \phi^\# \omega ), \qquad \omega \in \mathcal{D}^n(Z).
\end{equation}
A metric functional $T$ is said to have finite mass if there exists a finite Borel measure $\mu$ such that for every $(f, \pi_1, \dots, \pi_n) \in \mathcal{D}^n(Z)$,
\begin{equation}
\label{eq:massbound}
|T(f, \pi_1, \dots, \pi_n) | \leq \prod_{i=1}^n \mathrm{Lip}(\pi_i) \int_Z |f| d \mu.
\end{equation}
The mass of $T$, which we denote by $\|T\|$, is defined as the minimal $\mu$ satisfying (\ref{eq:massbound}).

\begin{definition}[\cite{ambrosio_currents_2000}]
An $n$-current on a metric space $Z$ is defined to be a metric functional $T$ with the additional properties that
\begin{enumerate}
\item $T$ is multilinear in $(f, \pi_1, \dots, \pi_n)$,
\item $\lim_{i \to \infty} T(f, \pi_1^i, \dots, \pi_n^i) = T (f, \pi_1, \dots, \pi_n)$ whenever $\pi_j^i \to \pi_j$ pointwise in $Z$ with $\mathrm{Lip}(\pi_j^i) \leq C$ for some constant $C$.
\item $T(f, \pi_1, \dots, \pi_n) = 0$ if for some $i \in \{ 1, \dots, n \}$ the function $\pi_i$ is constant on a neighborhood of $\{ f \neq 0 \}$.
\end{enumerate}

We denote by $\mathbf{M}_n(Z)$ the Banach space of $n$-dimensional currents on $Z$ with finite mass, with norm $\mathbf{M}(T) = \|T\|(Z)$.
\end{definition}

A function $g \in L^1(\mathbb{R}^n)$ induces a current $\llbracket g \rrbracket \in \mathbf{M}_n(\mathbb{R}^n)$ by
\begin{equation}
\llbracket g \rrbracket (f, \pi_1 , \dots ,\pi_n) 
:= \int_{\mathbb{R}^n} g f \det( \nabla \pi ) \, dx.
\end{equation}

A normal current is a current $T \in \mathbf{M}_n(Z)$ such that $\partial T \in \mathbf{M}_{n-1}(Z)$. 
A current $T$ is called rectifiable if $\|T\|$ is concentrated on a countably $\Ha^n$-rectifiable set, and vanishes on $\Ha^n$-negligible Borel sets. 
We define 
\begin{equation}
\set(T) := \left\{ x \in Z \, | \, \liminf_{r \downarrow 0} \frac{\|T\|(B(x,r))}{r^n} > 0 \right\}.
\end{equation}
For rectifiable currents, $\set(T)$ is a rectifiable set.
It is said to be \emph{integer} rectifiable if, in addition, for any $\phi \in \Lip(Z, \R^n)$ and any open $O \subset Z$, $\phi_\# (T \llcorner O) = \llbracket g \rrbracket$ for some $g \in L^1 (\R^n, \mathbb{Z})$. 
\emph{Integral} currents are integer rectifiable currents that are also normal. 
We denote the class of $n$-dimensional integral currents on $Z$ by $\mathbf{I}_n(Z)$.
The boundary operator $\partial$ maps from $I_{n+1}(Z)$ to $I_n(Z)$.

We say that a sequence of $n$-currents $T_1, T_2, \dots$ converges weakly to an $n$-current $T$ if for all $\omega \in \mathcal{D}^n(Z)$, $T_i(\omega) \to T(\omega)$.
The mass is lower semicontinuous under weak convergence \cite{ambrosio_currents_2000}.
That is, if $T_i \rightharpoonup T$ weakly, then for every $O$ open,
\begin{equation}
\liminf_{i \to \infty} \|T_i\|(O) \geq \|T\|(O).
\end{equation}

\subsection{The flat distance}

Federer and Fleming \cite{federer_normal_1960} extended the concept of flat distance to Euclidean integral currents. 
In \cite{wenger_flat_2007}, Wenger explores the properties of the following analogous flat distance between integral currents on metric spaces.

\begin{definition}[\cite{wenger_flat_2007}]
The flat distance between $T_1 \in \mathbf{I}_n(Z)$ and $T_2 \in \mathbf{I}_n(Z)$ is given by
\begin{equation}
d_{F}^Z (T_1, T_2 ) = 
\inf\left\{ \mathbf{M}(U) + \mathbf{M}(V) \,| \, T_1 - T_2 = U + \partial V \right\}
\end{equation}
where the $\inf$ is taken over all $U \in \mathbf{I}_n(Z)$ and $V \in \mathbf{I}_{n+1}(Z)$.
\end{definition}

\subsection{The intrinsic flat distance}

Subsequently, Sormani and Wenger introduced the concept of integral current spaces, and an \emph{intrinsic flat distance} between them.

\begin{definition}[\cite{sormani_intrinsic_2011}]
An $n$-dimensional integral current space $(X,d,T)$ is a triple of a separable metric space $X$ with a distance $d$ and an integer rectifiable current $T\in \mathbf{I}_n(\overline{X})$ on the completion $\overline{X}$ of $X$, with the additional condition that 
\begin{equation}
X = \left\{ x \in \overline{X} \, | \, \liminf_{r \downarrow 0} \frac{\|T\|(B(x,r))}{r^n} > 0 \right\}.
\end{equation}
\end{definition}

An $n$-dimensional oriented manifold $M$ induces an integral current space denoted by $\llbracket M \rrbracket=(M,d_M,T)$, with $d_M$ the geodesic distance on $M$, by 
\begin{equation}
T(f,\pi_1, \dots, \pi_n) = \int f  \, 
d\pi_1 \wedge \dots \wedge d\pi_n.
\end{equation}

Imitating Gromov's definition of the Gromov-Hausdorff distance, Sormani and Wenger \cite{sormani_intrinsic_2011} introduced the intrinsic flat distance as follows.

\begin{definition}[\cite{sormani_intrinsic_2011}]
The intrinsic flat distance between two integral current spaces $M_i = (X_i, d_i, T_i)$, $i=1,2$, is defined by
\begin{equation}
d_\mathcal{F}( M_1, M_2 ) := 
\inf d_F^Z \left( (\phi_1)_\# T_1 , (\phi_2)_\# T_2 \right)
\end{equation}
where the infimum is over all complete metric spaces $Z$ and all \emph{isometric} embeddings $\phi_i : X_i \to Z$.
\end{definition}

As in Gromov's definition, by an \emph{isometric} embedding from a metric space $X$ with distance $d_X$ into a metric space $Z$ with distance $d_Z$ we mean a map $I:X \to Z$ such that for every $x, y \in X$,
\begin{equation}
d_X(x,y) = d_Z(I(x),I(y)).
\end{equation}

We will also use the following result by Sormani and Wenger \cite{sormani_intrinsic_2011}.

\begin{theorem}[\cite{sormani_intrinsic_2011}]
\label{th:embcommonspace}
Let $M_i = (X_i, d_i, T_i)$ be a sequence of integral current spaces converging in the intrinsic flat sense to a limit integral current space $M = (X,d,T)$. 
Then there exist a complete, separable metric space $Z$, and isometric embeddings $\phi_i: \overline{X_i} \to Z$, $\phi: \overline{X} \to Z$ such that $(\phi_i)_\# T_i \to \phi_\# T$ in the flat distance in $Z$.
\end{theorem}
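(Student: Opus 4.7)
\medskip

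\noindent\textbf{Proof proposal.} The plan is to build $Z$ by gluing together approximate witness spaces for the intrinsic flat distances $d_\mathcal{F}(M_i, M) \to 0$ along the common copy of $\overline{X}$ that each one contains. First, pass to a subsequence so that $d_\mathcal{F}(M_i,M) < 2^{-i}$ (the conclusion for the original sequence then follows by a diagonal argument). For each $i$ the definition of the intrinsic flat distance provides a complete metric space $Z_i$ together with isometric embeddings $\psi_i \colon \overline{X_i} \to Z_i$ and $\tilde\psi_i \colon \overline{X} \to Z_i$ satisfying
\begin{equation}
d_F^{Z_i}\bigl( (\psi_i)_\# T_i,\; (\tilde\psi_i)_\# T \bigr) < 2^{-i}.
\end{equation}

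Next, I would assemble these into a single ambient space. Form the disjoint union $W = \bigsqcup_{i \geq 1} Z_i$ and impose the equivalence relation generated by $\tilde\psi_i(x) \sim \tilde\psi_j(x)$ for every $x \in \overline{X}$ and all $i,j$. Equip the quotient with the gluing pseudometric
\begin{equation}
\rho([p],[q]) = \inf \sum_{k=1}^{N} d_{Z_{i_k}}(a_k, b_k),
\end{equation}
where the infimum is over finite chains $p = a_1, b_1 \sim a_2, b_2 \sim \dots \sim a_N, b_N = q$ with consecutive consecutive pairs $(a_k,b_k)$ lying in a common $Z_{i_k}$ and consecutive matchings $b_k \sim a_{k+1}$ given by the identification across the copies of $\overline{X}$. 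The key geometric point is that, since each $\tilde\psi_i$ is an isometric embedding of $\overline{X}$, any such chain can be shortcircuited without increase in length, so $\rho$ restricted to the image of any single $Z_i$ recovers $d_{Z_i}$; in particular $\rho$ is a genuine metric and every inclusion $Z_i \hookrightarrow Z_0 := (W/\sim,\rho)$ is isometric. Taking $Z$ to be the metric completion of the separable subspace generated by the countable union of $\phi_i(\overline{X_i})$ and $\phi(\overline{X})$ yields a complete separable metric space, where $\phi_i$ is the composition of $\psi_i$ with the inclusion and $\phi$ is the common map induced by any $\tilde\psi_i$.

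Finally, the flat-distance conclusion transfers for free: since each inclusion $Z_i \hookrightarrow Z$ is isometric, it pushes forward the integral fillings $U_i \in \mathbf{I}_n(Z_i)$ and $V_i \in \mathbf{I}_{n+1}(Z_i)$ witnessing $d_F^{Z_i} < 2^{-i}$ to integral currents in $Z$ with the same masses, giving
\begin{equation}
d_F^Z\bigl( (\phi_i)_\# T_i,\; \phi_\# T \bigr) \;\leq\; d_F^{Z_i}\bigl( (\psi_i)_\# T_i,\; (\tilde\psi_i)_\# T \bigr) \;<\; 2^{-i} \to 0.
\end{equation}

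The main obstacle is the verification that the gluing yields a bona fide metric space in which each $Z_i$ sits isometrically, i.e.\ that no extraneous collapsing occurs along chains passing through other $Z_j$'s. This reduces to the observation that, because the identifications are made only along a common isometric copy of $\overline{X}$, any chain can be replaced by a two-segment chain in a single $Z_i$ of no greater length, using the triangle inequality in $Z_i$ and the isometric identification of the $\tilde\psi_i$'s. Once this is in place, separability is handled by restricting to the closure of the countable union of images, completeness by one final metric completion, and the flat distance bound is then immediate.
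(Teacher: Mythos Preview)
The paper does not supply a proof of this theorem; it is quoted from Sormani--Wenger \cite{sormani_intrinsic_2011} as a background result, so there is nothing in the paper to compare your argument against. Your gluing construction is in fact the standard one, and it is essentially how the original reference proceeds.

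Two points in your write-up need adjustment. First, the step ``pass to a subsequence so that $d_{\mathcal F}(M_i,M)<2^{-i}$; the full sequence then follows by a diagonal argument'' is both unnecessary and does not do what you claim: the theorem asks for a \emph{single} ambient $Z$ containing \emph{all} the $M_i$, and no diagonal trick recovers this from a construction valid only along a subsequence. Simply drop the subsequence step and, for each $i$, choose a witness space $Z_i$ with $d_F^{Z_i}\bigl((\psi_i)_\#T_i,(\tilde\psi_i)_\#T\bigr)<d_{\mathcal F}(M_i,M)+1/i$; the rest of your gluing then works verbatim and gives $d_F^Z\bigl((\phi_i)_\#T_i,\phi_\#T\bigr)\to 0$.

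Second, when you cut down to a separable $Z$ by taking the closure of $\bigcup_i\phi_i(\overline{X_i})\cup\phi(\overline X)$, you lose the fillings $U_i,V_i$, which a priori live elsewhere in $Z_i$; without them you cannot certify the flat-distance bound inside $Z$. The fix is easy: since $U_i$ and $V_i$ have finite mass, each is concentrated on a $\sigma$-compact (hence separable) set, so enlarge your countable union to include these sets before completing. With these two corrections your argument is complete.
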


\begin{definition}
Let $X$ be a metric space and let $\Sigma$ denote its Borel $\sigma$-algebra.
We say that a sequence $\mu_n$ of finite measures on $(X, \Sigma)$ converges \emph{weakly} to a finite measure $\mu$ on $(X,\Sigma)$, and write $\mu_n \rightharpoonup \mu$ weakly, if for all bounded and continuous functions $f : X \to \R$,
\begin{equation}
\int_X f d \mu_n \to \int_X f d \mu.
\end{equation}
\end{definition}

We finally include the following simple lemma for later use.

\begin{lemma}
\label{le:weakcurweakmeas}
Suppose $Z$ is a complete metric space, and $T_i \in \finmasscur_n(Z)$, $(i=1,2,\dots$) and $T\in \finmasscur_n(Z)$ such that $T_i \rightharpoonup T$ weakly. 
Moreover, assume that $\Mass(T_i) \to \Mass(T)$. 
Then $\|T_i\| \rightharpoonup \|T\|$ weakly as measures.
\end{lemma}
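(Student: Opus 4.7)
The plan is to recognize the statement as a Portmanteau-style equivalence: for finite Borel measures on a metric space, lower semicontinuity on open sets together with convergence of total masses is equivalent to weak convergence. So the currents enter only through the already-stated fact that mass is lower semicontinuous under weak convergence.

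First I would invoke that lower semicontinuity to obtain $\liminf_{i \to \infty} \|T_i\|(O) \geq \|T\|(O)$ for every open $O \subset Z$. Combining this with the hypothesis $\|T_i\|(Z) \to \|T\|(Z)$ and passing to complements (i.e.\ subtracting the open-set inequality for $Z \setminus C$ from the convergence of total masses) yields the complementary bound $\limsup_{i \to \infty} \|T_i\|(C) \leq \|T\|(C)$ for every closed $C \subset Z$.

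Next, to verify the definition of weak convergence, I would fix a bounded continuous $f : Z \to \R$ and, after replacing $f$ by $f + \|f\|_\infty$, reduce to the case $f \geq 0$. The layer-cake identity $\int f \, d\mu = \int_0^{\|f\|_\infty} \mu(\{f > t\}) \, dt$ converts the problem into one about super-level sets: $\{f > t\}$ is open and $\{f \geq t\}$ is closed, and the two agree for a.e.\ $t$. Applying Fatou's lemma to the former and the reverse Fatou lemma to the latter --- the latter allowed because the integrands are uniformly bounded by $\|T_i\|(Z)$, which is itself uniformly bounded thanks to the assumed convergence of total masses --- gives
\begin{equation}
\int f \, d\|T\| \leq \liminf_{i \to \infty} \int f \, d\|T_i\| \leq \limsup_{i \to \infty} \int f \, d\|T_i\| \leq \int f \, d\|T\|,
\end{equation}
which is precisely the required convergence.

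I do not expect any serious obstacle. The only ingredient drawn from the theory of currents is the lower semicontinuity of mass under weak convergence, and everything else is a routine measure-theoretic manipulation in which the role of the hypothesis $\Mass(T_i) \to \Mass(T)$ is to upgrade the one-sided open-set inequality into the two-sided control needed to conclude convergence of integrals.
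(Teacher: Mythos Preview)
Your proposal is correct and follows essentially the same approach as the paper: both start from the lower semicontinuity $\liminf_i \|T_i\|(O) \geq \|T\|(O)$ on open sets, combine it with the hypothesis $\|T_i\|(Z)\to\|T\|(Z)$, and then conclude weak convergence of the mass measures via the portmanteau theorem. The only difference is that the paper simply cites the portmanteau theorem (referring to \cite{klenke_probability_2007}), whereas you spell out the relevant implication by hand using the layer-cake representation and Fatou's lemma; this is a harmless elaboration of the same idea.
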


\begin{proof}
Since $T_i \rightharpoonup T$ weakly, the mass is lower semicontinuous, in that 
\begin{equation}
\liminf_{i \to \infty} \|T_i\|(O) \geq \|T\|(O),
\end{equation}
for any $O \subset Z$ open \cite{ambrosio_currents_2000}. 
By assumption, $\|T_i\|(Z) \to \|T\|(Z)$. 
Since $Z$ is a complete metric space, it follows that $\|T_i\| \to \|T\|$ by for instance the portmanteau theorem (cf. \cite{klenke_probability_2007}).
\end{proof}

\subsection{Rectifiable sets in metric and Banach spaces}

In this section, we will review some important results on rectifiable sets on metric spaces, that were obtained mainly by Kirchheim \cite{kirchheim_rectifiable_1994} and Ambrosio and Kirchheim \cite{ambrosio_rectifiable_2000}. 
First of all, we review two concepts of differentiability, namely metric differentiability and $w^*$-differentiability.

\begin{definition}[{\cite[Definition 3.1]{ambrosio_rectifiable_2000}}]
Let $Z$ be a metric space. 
A function $g:\R^n \to Z$ is called metrically differentiable at a point $x \in \R^n$ if there is a seminorm $md_xg(.)$ on $\R^n$ such that
\begin{equation}
d(g(y), g(x)) - md_x g (y - x) = o(|y - x|), 
\end{equation}
as $y \to x$. 
We call $md_x g$ the metric differential of $g$ at $x$.
\end{definition}

\begin{theorem}[{\cite[Theorem 3.2]{ambrosio_rectifiable_2000}}]
Any function $g: \R^n \to Z$, with $Z$ a metric space, is metrically differentiable at $\lebmeas^n$-a.e. $x \in \R^n$.
\end{theorem}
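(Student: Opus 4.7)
The plan is to reduce the theorem to a countable family of scalar Lipschitz functions and then invoke the classical Rademacher theorem. I assume tacitly that $g$ is Lipschitz with constant $L$ (without such a hypothesis the conclusion is vacuous / meaningless, and this is the case in which the theorem is invoked later in the paper). Since $g(\R^n)$ is separable, fix a countable dense set $\{z_j\}_{j \in \N} \subset g(\R^n)$ and define $\psi_j : \R^n \to \R$ by $\psi_j(x) := d(g(x), z_j)$. Each $\psi_j$ is $L$-Lipschitz, hence by Rademacher's theorem differentiable $\lebmeas^n$-a.e. I would work on the full-measure set $E$ on which every $\psi_j$ is differentiable and every point is simultaneously a Lebesgue point of every $\nabla \psi_j$.

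The bridge from distances in $Z$ to the real-valued functions on $\R^n$ is the identity
\begin{equation*}
d(g(x), g(y)) = \sup_{j \in \N} |\psi_j(x) - \psi_j(y)|,
\end{equation*}
which is a consequence of the triangle inequality (the extremizer $z = g(x)$ gives $d(g(x), g(y))$) together with density of $\{z_j\}$. This motivates the candidate metric differential
\begin{equation*}
md_x g(v) := \sup_{j \in \N} |\langle \nabla \psi_j(x), v\rangle|, \qquad x \in E,\ v \in \R^n,
\end{equation*}
which is manifestly a seminorm (supremum of absolute values of linear functionals) and is dominated by $L|v|$.

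The directional lower bound $\liminf_{t \to 0} d(g(x+tv), g(x))/|t| \geq md_x g(v)$ follows immediately by applying the identity with $y = x + tv$, taking the $\liminf$ for each fixed $j$, and then the supremum over $j$. The matching upper bound is the main obstacle, because in
\begin{equation*}
\psi_j(x+tv) - \psi_j(x) = t \langle \nabla \psi_j(x), v\rangle + o_j(t)
\end{equation*}
the error depends on $j$, and $\sup_j$ cannot be interchanged with $\limsup_{t \to 0}$ for free. I would handle this in two steps. First, for a countable dense set $D \subset \R^n$ of test directions, combine an Egorov/Lusin-type argument with a Vitali differentiation at Lebesgue density points: on any set of large Lebesgue measure the differentiabilities of the $\psi_j$ can be made uniform in $j$, and at a density point of such a set the directional limit exists and equals $md_x g(v)$ for every $v \in D$. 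Second, extend the equality to all $v \in \R^n$ using the uniform $L$-Lipschitz estimate, since both $v \mapsto d(g(x+tv), g(x))/|t|$ and $v \mapsto md_x g(v)$ are $L$-Lipschitz. A final blow-up/measurable-selection argument at Lebesgue points of the map $x \mapsto md_x g$ upgrades the purely directional statement to the full metric differentiability quantifier $d(g(y), g(x)) - md_x g(y - x) = o(|y - x|)$.
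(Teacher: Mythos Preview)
The paper does not give its own proof of this statement: it is quoted as background from \cite{ambrosio_rectifiable_2000} (originally Kirchheim \cite{kirchheim_rectifiable_1994}) and used without argument. So there is nothing in the paper to compare your proposal against.

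That said, your outline is essentially Kirchheim's original proof. The reduction via $\psi_j(x)=d(g(x),z_j)$, the identity $d(g(x),g(y))=\sup_j|\psi_j(x)-\psi_j(y)|$, and the candidate $md_xg(v)=\sup_j|\langle\nabla\psi_j(x),v\rangle|$ are exactly the ingredients he uses. The one place where your sketch is genuinely soft is the upper bound: invoking ``an Egorov/Lusin-type argument with a Vitali differentiation at Lebesgue density points'' is the right shape, but the actual mechanism is more specific. Kirchheim's key observation is that for each rational direction $v$ the function $x\mapsto\sup_j\langle\nabla\psi_j(x),v\rangle$ is an \emph{approximate} upper bound for the directional difference quotients at a.e.\ $x$ (this uses that the supremum of countably many a.e.\ differentiable Lipschitz functions has an approximate derivative a.e.), and then a separate lemma upgrades ``approximate limit in each rational direction'' to the full $o(|y-x|)$ statement. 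Your Egorov formulation can be made to work, but as written it does not explain why the exceptional set where uniformity fails can be discarded \emph{at almost every point} rather than just on a set of large measure; making that precise is exactly the content of the density-point / approximate-limit step.
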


Before we recall the $w^*$-differentiability, let us first specify what we mean by a $w^*$-separable Banach space.

\begin{definition}
By a $w^*$-separable Banach space $Y$ we will denote a dual Banach space $Y = G^*$, for a separable Banach space $G$.
\end{definition}

An important example of a $w^*$-separable Banach space is the space $\ell^\infty$. 
Any separable metric space $X$ can be isometrically embedded into $\ell^\infty$ by the Kuratowski embedding.
If $x_i$ is a dense sequence in $X$, such an embedding is given by
\begin{equation}
(I(x))_j = d(x,x_j) - d(x_0, x_j).
\end{equation}

\begin{definition}[{\cite[Definition 3.4]{ambrosio_rectifiable_2000}}]
Let $Y$ be a $w^*$-separable dual space, and let $g: \R^n \to Y$. 
We say that $g$ is $w^*$-differentiable at $x\in\R^n$ if there is a linear map $wd_xg:\R^n \to Y$ such that
\begin{equation}
w^*-\lim_{y\to x} \frac{g(y) - g(x) - wd_xg (y-x)}{|y-x|} = 0.
\end{equation}
The map $wd_x g$ is called the $w^*$-differential of $g$ at $x$.
\end{definition}

\begin{theorem}[{\cite[Theorem 3.5]{ambrosio_rectifiable_2000}}]
Let $Y$ be a $w^*$-separable Banach space.
Any Lipschitz function $g: \R^n \to Y$ is metrically and $w^*$-differentiable and fulfils
\begin{equation}
md_xg(v) = \|wd_xg(v)\|, \qquad \text{ for all } v \in \R^n,
\end{equation}
for $\lebmeas^n$-a.e. $x \in \R^n$.
\end{theorem}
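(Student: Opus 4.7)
The plan is to reduce $w^*$-differentiability to Rademacher's theorem applied to the countable family of scalar projections of $g$, and then to derive the norm identity by comparing with the metric differential. Since metric differentiability almost everywhere is already provided by Theorem 3.2 of the excerpt, what remains is to construct $wd_x g$, verify $w^*$-differentiability, and prove $md_x g(v) = \|wd_x g(v)\|$ for almost every $x$.

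\emph{Construction of the candidate differential.} Let $G$ be the separable predual, so $Y = G^*$, and fix a countable dense subset $\{\varphi_k\}_{k \in \N}$ of the unit ball of $G$. For each $k$, the real-valued function $g_k := \langle g(\cdot), \varphi_k\rangle$ is Lipschitz with constant at most $\Lip(g)$, hence by Rademacher's classical theorem it is fully differentiable on the complement of a Lebesgue-null set $N_k$. Let $N$ be the union of the $N_k$ together with the null set on which metric differentiability fails. For $x \notin N$ and $v \in \R^n$, the assignment $\varphi_k \mapsto \nabla g_k(x) \cdot v$ is linear on $\{\varphi_k\}$ and bounded by $\Lip(g)|v| \|\varphi_k\|_G$, so it extends uniquely to a bounded linear functional on $G$, which defines $wd_x g(v) \in Y$. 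Linearity in $v$ is inherited from each $\nabla g_k(x) \cdot v$, and one has $\|wd_x g(v)\|_Y \leq \Lip(g)|v|$.

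\emph{Verification of $w^*$-differentiability.} For $\varphi \in G$ with $\|\varphi\|_G \leq 1$ and $\eps > 0$, pick $k$ with $\|\varphi - \varphi_k\|_G < \eps$, and split
\begin{equation*}
\frac{\langle g(y) - g(x) - wd_x g(y-x), \varphi \rangle}{|y - x|}
\end{equation*}
into the corresponding expression with $\varphi_k$ (which tends to $0$ as $y \to x$ by full Rademacher differentiability of $g_k$) plus the expression with $\varphi - \varphi_k$ (whose absolute value is bounded by $2\Lip(g)\eps$ via the Lipschitz estimate on $g$ and on $wd_x g$). Taking $\limsup_{y \to x}$ and then $\eps \downarrow 0$ yields the required $w^*$-differentiability.

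\emph{The norm identity.} One inequality is immediate: the difference quotient $[g(x+tv) - g(x)]/t$ converges $w^*$ to $wd_x g(v)$ while its $Y$-norm tends to $md_x g(v)$, so lower semicontinuity of the norm under $w^*$-convergence gives $\|wd_x g(v)\|_Y \leq md_x g(v)$. The reverse inequality is the main obstacle, since $w^*$-convergence does not in general upgrade to norm convergence. The strategy is to reduce to a one-dimensional integral representation: for fixed $v$, the curve $\gamma(t) := g(x + tv)$ is Lipschitz into $Y$ and, by the previous steps applied in $\R$, admits a $w^*$-derivative $\gamma'(t)$ at almost every $t$; pairing with $\varphi \in G$ and applying the fundamental theorem of calculus to the Lipschitz scalar function $\langle \gamma(\cdot), \varphi\rangle$ yields
\begin{equation*}
\langle \gamma(s) - \gamma(t), \varphi\rangle = \int_t^s \langle \gamma'(\tau), \varphi\rangle\, d\tau .
\end{equation*}
Taking the supremum over $\varphi$ in the unit ball of $G$ and bounding the integrand by $\|\gamma'(\tau)\|_Y$ gives $\|\gamma(s) - \gamma(t)\|_Y \leq \int_t^s \|\gamma'(\tau)\|_Y\, d\tau$. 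Dividing by $s - t$ and letting $s \downarrow t$ at a Lebesgue point of $\tau \mapsto \|\gamma'(\tau)\|_Y$ produces $md_{x+tv} g(v) \leq \|wd_{x+tv} g(v)\|_Y$ for almost every $t$. A Fubini reduction along lines in each direction drawn from a countable dense subset of $\R^n$, combined with the fact that both sides are seminorms in $v$ (and hence continuous on $\R^n$), upgrades this to the claimed identity for every $v$ at almost every $x$.
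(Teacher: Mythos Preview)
The paper does not prove this theorem; it is quoted as background from \cite{ambrosio_rectifiable_2000}, so there is no in-paper proof to compare against. Your argument is essentially the standard one and is correct in substance.

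One wording issue in the construction step: saying the assignment $\varphi_k \mapsto \nabla g_k(x)\cdot v$ is ``linear on $\{\varphi_k\}$'' is not meaningful, since $\{\varphi_k\}$ is merely a countable dense set with no linear structure. The clean formulation is that the difference quotients $(g(x+tv)-g(x))/t$ lie in $Y$ with norm at most $\Lip(g)|v|$ and converge when paired with each $\varphi_k$; uniform boundedness plus density then force $w^*$-convergence to some element of $Y=G^*$, which is automatically linear in $\varphi$. Linearity in $v$ follows because each pairing $\langle wd_xg(v),\varphi_k\rangle=\nabla g_k(x)\cdot v$ is linear in $v$, and density plus boundedness propagate this to all $\varphi\in G$.

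The rest of the argument is sound: the $w^*$-differentiability verification via the $3\eps$ splitting, the inequality $\|wd_xg(v)\|\le md_xg(v)$ from lower semicontinuity of the norm under $w^*$-limits, and the reverse inequality via the one-dimensional integral estimate $\|\gamma(s)-\gamma(t)\|\le\int_t^s\|\gamma'(\tau)\|\,d\tau$ followed by the Fubini reduction over a countable dense set of directions and continuity of seminorms in $v$. Just make sure to note that $\tau\mapsto\|\gamma'(\tau)\|$ is measurable (it equals $\sup_k\langle\gamma'(\tau),\varphi_k\rangle$) and that on almost every line the identification $\gamma'(t)=wd_{x+tv}g(v)$ holds at almost every $t$ because the full-space exceptional set is Lebesgue-null.
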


Next, we recall the definition of the Jacobian of a linear map between two Banach spaces.

\begin{definition}[{\cite[Definition 4.1]{ambrosio_rectifiable_2000}}]
Let $V$ and $W$ be Banach spaces, with $\dim V = n$. Let $L$ be a linear map $L:V \to W$. 
Define the $n$-Jacobian of $L$ by 
\begin{equation}
J_n(L) := \frac{\omega_n}
			   {\Ha^n(\{x \in V \, | \, \| L(x) \| \leq 1 \})}.
\end{equation}
Similarly, when $s$ is a seminorm on $\R^n$, define 
\begin{equation}
J_n(s) := \frac{\omega_n}
	 		   {\Ha^n(\{x \in \R^n \, | \, s(x) \leq 1 \})}.
\end{equation}
Here, $\omega_n$ is the volume of the Euclidean unit ball in $n$ dimensions.
\end{definition}

\begin{definition}
The upper and lower $n$-dimensional densities of a finite Borel measure $\mu$ at a point $x$ are defined respectively as
\begin{equation}
\Theta^*_n (\mu,x):= \limsup_{ r \downarrow 0} \frac{\mu(B_r(x)) }{\omega_n r^n}, \qquad 
\Theta_{*n}(\mu,x):= \liminf_{r \downarrow 0} \frac{\mu(B_r(x))}{\omega_n r^n}.
\end{equation}
When $\Theta^*_n(\mu,x) = \Theta_{*n}(\mu,x)$ in a point $x$ we define the $n$-dimensional density of $\mu$ at $x$ by $\Theta_n(\mu,x) = \Theta^*_n(\mu,x)$. 
\end{definition}

\begin{definition}
\label{de:rectifiability}
A subset $S\subset Z$ is called countably $\Ha^n$-rectifiable if there exists a sequence of Lipschitz functions $g_j: A_j \subset \R^n \to Z$ such that 
\begin{equation}
\Ha^n \left( S \backslash \bigcup_j g_j(A_j) \right) = 0.
\end{equation}
We say that a finite Borel measure $\mu$ is $n$-rectifiable if $\mu = \theta \Ha^n \llcorner S$ for a countably $\Ha^n$-rectifiable set $S$ and a Borel function $\theta:S \to (0,\infty)$.
\end{definition}

\begin{definition}[{\cite[Definition 5.5]{ambrosio_rectifiable_2000}}]
If $Y$ is a $w^*$-separable dual space, and $S\subset Y$ is countably $\Ha^n$-rectifiable, with functions $g_j$ as in Definition \ref{de:rectifiability}, the approximate tangent space to $S$ at a point $x$ is defined as 
\begin{equation}
\Tan(S,x) = wd_y g_i(\R^n),
\end{equation} 
when for some $i\in \mathbb{N}$, $y = g_i^{-1}(x)$ and $g_i$ is metrically and $w^*$-differentiable at $y$, with $J_n(wd_y g_i) > 0$. 
It is shown in \cite{ambrosio_rectifiable_2000} that this is a good definition for $\Ha^n$-a.e. $x \in S$.
\end{definition}

In case $X$ is an arbitrary separable metric space, the approximate tangent space can still be defined by using an isometric embedding $j:X \to Y$ of $X$ into a $w^*$-separable dual space $Y$, and setting
\begin{equation}
\Tan(S,x) = \Tan(j(S),j(x)).
\end{equation}
It is shown that this definition does not depend on the choice of $j$ and $Y$, in the sense that $\Tan(S,x)$ is uniquely determined $\mathcal{H}^k$-a.e. up to linear isometries \cite{ambrosio_rectifiable_2000,ambrosio_currents_2000}.

The next theorem by Ambrosio and Kirchheim \cite{ambrosio_rectifiable_2000} shows the existence of tangential derivatives of Lipschitz functions on rectifiable sets in $w^*$-separable dual spaces. 
In the formulation of the theorem, the distance $d_w$ metrizes the $w^*$-topology of a $w^*$-separable dual space $Y=G^*$, that is, for $x,y \in Y$,
\begin{equation}
d_w(x,y) := \sum_{j=0}^\infty 2^{-j} | \langle x-y, g_j \rangle|,
\end{equation}
where $(g_j)_{j=1}^\infty \subset G$ is a countable dense set in the unit ball of $G$.

\begin{theorem}[{\cite[Theorem 8.1]{ambrosio_rectifiable_2000}}]
\label{th:TangDiff}
Let $S\subset Y$ be a countably $\Ha^n$-rectifiable set of a $w^*$-separable dual space $Y$, and let $f$ be a Lipschitz function from $Y$ into another $w^*$-separable dual space $\tilde{Y}$. 
Let $\theta:S \to (0,\infty)$ be integrable with respect to $\Ha^n \llcorner S$ and let $\mu = \theta \Ha^n \llcorner S$ be the corresponding rectifiable measure.

Then for $\Ha^n$-almost every $x\in S$, there exist a $w^*$-continuous and linear map $L: Y \to \tilde{Y}$, and a Borel set $S^x \subset S$ such that $\Theta_n^*( \mu \llcorner S^x, x) = 0$ and 
\begin{equation}
\lim_{y \in S \backslash S^x \to x} \frac{d_w(f(y),f(x) + L(y-x))}{|y - x|} = 0.
\end{equation}
The map $L$ is uniquely determined on $\Tan(S,x)$. We denote its restriction to $\Tan(S,x)$ by
\begin{equation}
d_x^S f: \Tan(S,x) \to \tilde{Y},
\end{equation}
and call it the tangential differential. 
It is characterized by the property that for any Lipschitz map $g: D \subset \R^n \to S$,
\begin{equation}
wd_y(f \circ g) = d_{g(y)}^S f \circ wd_y g, \qquad \text{for } \mathcal{L}^n\text{-a.e. } y \in D.
\end{equation}
\end{theorem}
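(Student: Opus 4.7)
The plan is to follow the strategy of Ambrosio and Kirchheim: reduce to parametrised pieces of $S$, transport the $w^*$-differentiability of a Lipschitz map on $\R^n$ to a tangential differential on $S$, and control the approximation error off a density-zero subset.

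\textbf{Reduction to parametrised pieces.} Countable $\Ha^n$-rectifiability of $S$, together with the form $\mu = \theta \, \Ha^n \llcorner S$ of the measure, yields a Borel decomposition $S = N \cup \bigsqcup_j S_j$ with $\mu(N) = 0$ and $S_j \subset g_j(A_j)$ for Lipschitz $g_j : A_j \subset \R^n \to Y$. It suffices to establish the claim at $\mu$-a.e. $x \in S_j$; I fix $j$ and write $g = g_j$. Extending $g$ coordinatewise via McShane (after a Kuratowski embedding of $Y$ into $\ell^\infty$) gives a Lipschitz map $\R^n \to Y$, so Theorem \ref{th:TangDiff}'s predecessor (Theorem 3.5) provides $md_y g$ and $wd_y g$ at $\mathcal{L}^n$-a.e. $y$, and likewise $wd_y(f\circ g)$ for the Lipschitz composition $f\circ g : \R^n \to \tilde Y$. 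An area-formula identification of $\theta$ with the multiplicity of $g$ shows that at $\mu$-a.e. $x = g(y)$ one has $J_n(md_y g) > 0$, which forces $wd_y g$ to be a linear isomorphism onto $\Tan(S,x)$.

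\textbf{Construction of the differential.} At such a good point I set
\begin{equation*}
d_x^S f \bigl( wd_y g(v) \bigr) := wd_y(f \circ g)(v), \qquad v \in \R^n,
\end{equation*}
which defines a bounded linear map on the finite-dimensional subspace $\Tan(S,x) \subset Y$. Since $\Tan(S,x)$ is $w^*$-closed, I pick a basis of $\Tan(S,x)$ dual to functionals in the predual $G$ of $Y$, extending coordinatewise to a $w^*$-continuous linear $L : Y \to \tilde Y$; by construction its restriction to $\Tan(S,x)$ is unique.

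\textbf{The exceptional set.} The main obstacle is producing a Borel set $S^x$ with $\Theta_n^*(\mu \llcorner S^x, x) = 0$ off which the linear approximation holds. A Severini--Egorov / Lusin reduction yields a compact $A' \subset A$ on which $g$ is bi-Lipschitz and on which the difference quotients defining $wd_\cdot g$ and $wd_\cdot(f\circ g)$ converge uniformly. I work at a Lebesgue density-one point $y$ of $A'$ with $g(y) = x$ and set
\begin{equation*}
S^x := \bigl( S \setminus g(A') \bigr) \cup \bigcup_k E_k,
\end{equation*}
where $E_k \subset g(A')$ collects those $x' = g(y')$ for which the $d_w$-remainder of $f \circ g$ at $y$ exceeds $|y'-y|/k$. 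The first piece has vanishing $\mu$-density at $x$ because $A \setminus A'$ has Lebesgue density zero at $y$ and $g|_{A'}$ is bi-Lipschitz; each $E_k$ has vanishing density by the uniform $w^*$-differentiability of $f\circ g$, transported through $g|_{A'}^{-1}$, and a diagonal choice $k(r)\to\infty$ assembles a single $S^x$. The bi-Lipschitz property on $A'$ also identifies the increment $L(x'-x)$ with $wd_y(f\circ g)(y'-y)$ up to a $d_w$-term of order $o(|y'-y|) = o(|x'-x|)$, yielding the claimed limit. The chain-rule characterisation then follows by applying the same construction to an arbitrary Lipschitz $g : D \to S$, combined with uniqueness of $d_x^S f$ on $\Tan(S,x)$.
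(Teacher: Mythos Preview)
The paper does not prove this theorem: it is stated in the background section as a direct citation of \cite[Theorem~8.1]{ambrosio_rectifiable_2000}, with no accompanying argument. There is therefore nothing in the paper to compare your proposal against.

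That said, your sketch is a faithful outline of the Ambrosio--Kirchheim argument and is essentially correct. Two points could be tightened. First, in the step ``the bi-Lipschitz property on $A'$ also identifies the increment $L(x'-x)$ with $wd_y(f\circ g)(y'-y)$ up to a $d_w$-term of order $o(|y'-y|)$'', you are implicitly using that $L$ carries the $w^*$-remainder $g(y')-g(y)-wd_yg(y'-y)$ (which is $o(|y'-y|)$ only in the $d_w$-metric, not in norm) to a $d_w$-small quantity in $\tilde Y$; this is where $w^*$-continuity of $L$ is genuinely needed, and it would be worth making the estimate explicit. Second, the Egorov step is not strictly necessary for controlling the density of $S^x$ at the fixed point $x=g(y)$: once $f\circ g$ is $w^*$-differentiable at $y$ itself, the preimage of each $E_k$ is bounded away from $y$ and hence has Lebesgue density zero there automatically. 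Egorov is what one uses to pass from ``a.e.\ differentiability'' to a chart on which the approximation is uniform, which matters for other parts of the theory but is not the crux here.
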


Finally, we recall the area formula by Ambrosio and Kirchheim \cite{ambrosio_rectifiable_2000}.

\begin{theorem}[{Area formula \cite[Theorem 8.2]{ambrosio_rectifiable_2000}}]
Let $f:Z \to \tilde{Z}$ be a Lipschitz function and let $S \subset Z$ be a countably $\Ha^n$-rectifiable set. 
Then, for any Borel function $\theta: S \to [0,\infty]$,
\begin{equation}
\int_S \theta(x) J_n(d^S f) d\Ha^n(x) = \int_{\tilde{Z}} \sum_{x\in S \cap f^{-1}(y) } \theta(x) d\Ha^n(y).
\end{equation}
Moreover, for any Borel set $A$ and any Borel function $\theta: \tilde{Z} \to [0,\infty]$,
\begin{equation}
\int_A \theta(g(x)) J_n(d^S f) d\Ha^n(x) 
= \int_{\tilde{Z}} \theta(y) \Ha^0\left(A \cap f^{-1}(y) \right) d \Ha^n (y).
\end{equation}
\end{theorem}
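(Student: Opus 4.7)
The plan is to reduce to the Euclidean area formula by decomposing $S$ into countably many pieces, each bi-Lipschitz equivalent to a Borel subset of $\R^n$, and then to apply the chain rule of Theorem \ref{th:TangDiff} for the tangential differential. I would first embed $Z$ and $\tilde{Z}$ isometrically into $w^*$-separable dual spaces (e.g.\ via the Kuratowski embedding into $\ell^\infty$) so that the results of the preceding subsection apply verbatim. By Kirchheim's structure theorem for rectifiable sets in metric spaces \cite{kirchheim_rectifiable_1994,ambrosio_rectifiable_2000}, modulo an $\Ha^n$-null set $S$ can be partitioned into pairwise disjoint Borel subsets $S_k$, each of which equals $g_k(A_k)$ for some bi-Lipschitz $g_k:A_k\subset\R^n\to S_k$. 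On each piece, the chain rule from Theorem \ref{th:TangDiff} gives $wd_y(f\circ g_k) = d^S_{g_k(y)} f\circ wd_y g_k$ for $\lebmeas^n$-a.e.\ $y\in A_k$, whence the Jacobians multiply:
\begin{equation}
J_n(wd_y(f\circ g_k)) = J_n(d^S_{g_k(y)} f)\cdot J_n(wd_y g_k).
\end{equation}

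Next I would apply the area formula for Lipschitz maps $\R^n\to Y$ into a $w^*$-separable dual space, which is already available from \cite{ambrosio_rectifiable_2000}, both to $g_k$ and to $f\circ g_k$. Setting $\psi_k(y):=\theta(g_k(y))$, the area formula for $g_k$ combined with its injectivity yields
\begin{equation}
\int_{A_k} \psi_k(y)\, J_n(wd_y g_k)\, d\lebmeas^n(y) = \int_{S_k} \theta(x)\, d\Ha^n(x),
\end{equation}
while the area formula for $f\circ g_k$ gives
\begin{equation}
\int_{A_k} \psi_k(y)\, J_n(wd_y(f\circ g_k))\, d\lebmeas^n(y) = \int_{\tilde{Z}} \sum_{y\in A_k\cap (f\circ g_k)^{-1}(z)} \psi_k(y)\, d\Ha^n(z).
\end{equation}
Injectivity of $g_k$ identifies the fiber sum with $\sum_{x\in S_k\cap f^{-1}(z)} \theta(x)$, and the Jacobian product identity rewrites the left-hand side of the latter display as $\int_{S_k} \theta(x)\, J_n(d^S_x f)\, d\Ha^n(x)$. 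Summing over $k$ by monotone convergence delivers the first identity.

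The second identity follows immediately: substituting $\theta \leftarrow \mathbf{1}_A\cdot(\theta\circ f)$ into the first identity makes the inner fiber sum collapse to $\theta(y)\,\Ha^0(A\cap f^{-1}(y))$, because $\theta\circ f$ is constant on each fiber $f^{-1}(y)$.

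The main technical obstacle is the bi-Lipschitz decomposition of a general countably $\Ha^n$-rectifiable set in a metric space, which relies on a Lusin-type argument together with Kirchheim's structural analysis of metric differentials of Lipschitz parametrizations; however, this is precisely the content of the results cited earlier in this section and may be used as a black box. A secondary subtlety is handling the $\Ha^n$-null subset of $S$ on which $J_n(wd_y g_k)=0$ (equivalently, where $\Tan(S,g_k(y))$ degenerates): one simply discards this set before invoking the chain rule and the Jacobian product identity, so it contributes nothing to either side.
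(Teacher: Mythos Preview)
The paper does not give its own proof of this statement: it is recalled in Section~\ref{se:background} as a background result and attributed to Ambrosio and Kirchheim \cite[Theorem 8.2]{ambrosio_rectifiable_2000}. Your sketch is correct and is in fact the standard route taken in the original reference: isometrically embed into a $w^*$-separable dual space, use the bi-Lipschitz decomposition of $S$ (as in \cite[Lemma~4.1]{ambrosio_currents_2000} or Lemma~\ref{le:dividemeasure} here), invoke the chain rule of Theorem~\ref{th:TangDiff} to factor the Jacobian, and reduce to the already-established area formula for Lipschitz maps from $\R^n$ into a metric space \cite[Theorem~5.1]{ambrosio_rectifiable_2000}. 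One small point worth making explicit is why $J_n$ is multiplicative under composition: this uses that $wd_y g_k$ is a linear isomorphism onto $\Tan(S,g_k(y))$ whenever $J_n(wd_y g_k)>0$, together with the linear change-of-variables identity $\Ha^n_W(L(E)) = J_n(L)\,\lebmeas^n(E)$ for an injective linear map $L:\R^n\to W$; but this is routine.
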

In the Theorem above, $J_n(d^S f)$ is calculated after embedding $Z$ and $\tilde{Z}$ in $w^*$-separable metric spaces and calculating the tangential derivative of the appropriate lift of $f$. We would also like to mention that in special cases it may be easier to apply the less general version of the area formula \cite[Theorem 5.2]{ambrosio_rectifiable_2000}.

\subsection{The John Ellipsoid}
\label{se:JohnEllipsoid}

The John Ellipsoid associated to a convex body is the inscribed maximal-volume ellipsoid. 
We will use the results on this ellipsoid to obtain good charts, and to create comparisons of certain Banach spaces to Hilbert spaces.

\begin{theorem}[John's Ellipsoid Theorem \cite{john_extremum_1948}, see also \cite{ball_ellipsoids_1992}]
\label{th:JohnEllipsoid}
For any convex body in $\R^n$, the inscribed maximal-volume ellipsoid exists and is unique.
The Euclidean ball $B_1(0)$ is the ellipsoid of maximal volume contained in the convex body $C \subset \R^n$ if and only if $B_1(0) \subset C$ and, for some $m \geq n$, there are unit vectors $u_i$ on the boundary of $C$ and positive numbers $c_i$ ($i=1, \dots, m$), for which $\sum_i c_i u_i = 0$ and $\sum_i c_i u_i \otimes u_i = I$, the identity on $\R^n$.
\end{theorem}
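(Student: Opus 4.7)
The plan splits the proof into existence (via compactness) and the two directions of the characterization; the sufficient direction will also supply uniqueness. For \emph{existence}, every ellipsoid in $\R^n$ has the form $E = T(B_1(0)) + a$ with $T \in \mathrm{Sym}_n^+$, $a \in \R^n$, and volume $\omega_n \det T$. The subset of $(T, a)$ with $E \subset C$ is closed, and boundedness of $C$ keeps $\|T\|$ and $|a|$ bounded, so this subset is compact; the continuous function $\det T$ attains its maximum there.

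\emph{Necessary direction, which I expect to be the main obstacle.} By affine equivariance (affine maps preserve volume ratios), it suffices to treat the case in which $B_1(0)$ itself is the optimal inscribed ellipsoid and derive the contact-point identities. The contact set $K := \partial B_1(0) \cap \partial C$ is compact and nonempty (otherwise $B_1(0)$ could be enlarged). For $(A, v) \in \mathrm{Sym}_n \times \R^n$ and small $t > 0$, the perturbed ellipsoid $(I + tA)(B_1(0)) + tv$ has volume $\omega_n(1 + t\,\mathrm{tr}(A) + O(t^2))$, and its point in direction $u \in \partial B_1(0)$ is $u + t(Au + v) + O(t^2)$. Because $C$ admits $\{\langle \cdot, u_0\rangle = 1\}$ as a supporting hyperplane at each $u_0 \in K$, maximality rules out any $(A, v)$ for which $\mathrm{tr}(A) > 0$ while $\langle A u_0, u_0\rangle + \langle v, u_0\rangle \leq 0$ uniformly on $K$. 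Turning this exclusion into a positive combination is the heart of the argument: in $\mathrm{Sym}_n \oplus \R^n$ equipped with the Frobenius inner product, the Hahn-Banach separation theorem applied to the closed convex cone generated by $\{(u \otimes u,\, u) : u \in K\}$ forces $(I, 0)$ to lie inside this cone. Carath\'eodory's theorem then picks out finitely many contact points $u_1, \dots, u_m$ and weights $c_i > 0$ with $\sum_i c_i u_i \otimes u_i = I$ and $\sum_i c_i u_i = 0$. Tracing the matrix identity gives $\sum_i c_i = n$, and the rank $n$ of $I$ forces $m \geq n$.

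\emph{Sufficient direction and uniqueness.} Suppose $B_1(0) \subset C$ together with the John identities. For any inscribed ellipsoid $E = T(B_1(0)) + a \subset C$ with $T \in \mathrm{Sym}_n^+$, the supporting hyperplane at $u_i$ yields $\langle x, u_i\rangle \leq 1$ for every $x \in C \supset E$; taking the supremum over $E$ gives $|T u_i| + \langle a, u_i\rangle \leq 1$, hence $\langle u_i, T u_i\rangle \leq |T u_i| \leq 1 - \langle a, u_i\rangle$. Multiplying by $c_i$, summing, and using $\sum_i c_i u_i \otimes u_i = I$ together with $\sum_i c_i u_i = 0$,
\begin{equation*}
\mathrm{tr}(T) \;=\; \mathrm{tr}\Bigl(T \sum_i c_i\, u_i \otimes u_i\Bigr) \;=\; \sum_i c_i \langle u_i, T u_i\rangle \;\leq\; \sum_i c_i (1 - \langle a, u_i\rangle) \;=\; \sum_i c_i \;=\; n.
\end{equation*}
The AM-GM inequality then gives $\det T \leq (\mathrm{tr}(T)/n)^n \leq 1$, so $\mathrm{Vol}(E) \leq \omega_n$. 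Equality forces $T = I$, after which $\langle a, u_i\rangle \leq 0$ combined with $\sum_i c_i u_i = 0$ and $c_i > 0$ forces $\langle a, u_i\rangle = 0$ for every $i$; since the $u_i$ span $\R^n$, $a = 0$. Thus $B_1(0)$ is the unique maximum-volume inscribed ellipsoid, which also yields uniqueness in full generality by applying the argument after sending any candidate maximizer to $B_1(0)$ via an invertible affine map.
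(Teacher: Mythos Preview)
The paper does not prove this theorem at all: it is quoted from John \cite{john_extremum_1948} and Ball \cite{ball_ellipsoids_1992} as a known tool, so there is no ``paper's own proof'' to compare against. Your argument is correct and is essentially the standard one found in Ball's exposition---compactness for existence, a first-variation/Hahn--Banach argument for the necessary direction, and the trace/AM--GM inequality for sufficiency and uniqueness. One small technical point worth making explicit in the necessary direction: after separation gives $(A,v)$ with $\mathrm{tr}\,A>0$ and $\langle Au,u\rangle+\langle v,u\rangle\le 0$ on $K$, replace $A$ by $A-\epsilon I$ for small $\epsilon>0$ to make the inequality strict on $K$; compactness then guarantees the perturbed ellipsoid stays inside $C$ for small $t$, which is needed to actually contradict maximality.
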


The theorem has the following consequence for the existence of equivalent norms in finite-dimensional normed spaces.

\begin{corollary}[cf. \cite{ball_ellipsoids_1992}]
\label{co:JohnEllipsoid}
If $\|.\|$ is a norm on a finite-dimensional Banach space $V$, and $\| . \|_J$ is the Hilbert-space norm associated to the maximal-volume ellipsoid inscribed in the unit ball in $V$, then for all $v \in V$,
\begin{equation}
\frac{1}{\sqrt{n}} \| v \|_J \leq \| v \| \leq \|v\|_J.
\end{equation}
\end{corollary}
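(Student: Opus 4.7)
The plan is to apply John's Ellipsoid Theorem (Theorem \ref{th:JohnEllipsoid}) to the unit ball $C = \{v \in V : \|v\| \leq 1\}$, which is a symmetric convex body. First I would identify $V \simeq \R^n$ by a linear change of coordinates that sends the John ellipsoid of $C$ to the standard Euclidean unit ball $B_1(0)$; in these coordinates $\|\cdot\|_J$ becomes the Euclidean norm $|\cdot|$. The upper inequality $\|v\| \leq \|v\|_J$ is then immediate: if $|v| \leq 1$ then $v \in B_1(0) \subset C$, so $\|v\| \leq 1$, and the general case follows by homogeneity.

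For the lower inequality, invoke the characterization from Theorem \ref{th:JohnEllipsoid}: there exist unit vectors $u_1, \dots, u_m \in \partial B_1(0) \cap \partial C$ and positive weights $c_1, \dots, c_m$ with $\sum_i c_i u_i \otimes u_i = I$. Taking the trace yields $\sum_i c_i = n$, since $|u_i|^2 = 1$. For any $v \in V$, expand
\begin{equation}
|v|^2 = \langle v, I v \rangle = \sum_i c_i \langle v, u_i \rangle^2.
\end{equation}
To control the inner products, I would observe that $H_i := \{x : \langle x, u_i \rangle = 1\}$ is the unique supporting hyperplane of $B_1(0)$ at $u_i$. Since $B_1(0) \subset C$ and $u_i \in \partial C$, any supporting hyperplane of $C$ at $u_i$ must also support $B_1(0)$ at $u_i$, hence coincides with $H_i$. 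Therefore $\langle x, u_i \rangle \leq 1$ for every $x \in C$, and by central symmetry of $C$ in fact $|\langle x, u_i \rangle| \leq 1$. Applied to any $v$ with $\|v\| \leq 1$, this gives $|v|^2 \leq \sum_i c_i = n$, i.e.\ $\|v\|_J \leq \sqrt{n}$, and the estimate $\|v\|_J \leq \sqrt{n}\,\|v\|$ for all $v$ follows by scaling.

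The only delicate step is the supporting-hyperplane argument: one needs the smoothness of $\partial B_1(0)$ to conclude uniqueness of the tangent hyperplane at $u_i$, together with the convexity of $C$ to ensure that this same hyperplane supports $C$. Everything else reduces to bookkeeping with the John decomposition of the identity.
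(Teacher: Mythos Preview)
Your argument is correct and is precisely the standard proof (essentially Ball's); the paper itself does not give a proof of this corollary but simply records it as a consequence of Theorem~\ref{th:JohnEllipsoid} with a reference to \cite{ball_ellipsoids_1992}. Your supporting-hyperplane step is fine as written: since $B_1(0)\subset C$ and $u_i\in\partial B_1(0)\cap\partial C$, any supporting hyperplane of $C$ at $u_i$ also supports $B_1(0)$ there, and smoothness of the Euclidean sphere forces it to be $\{x:\langle x,u_i\rangle=1\}$.
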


\section{The Dirichlet energy}
\label{se:NormEn}

In this section we define a Dirichlet energy of functions on sets of integral currents on metric spaces. 
First, we introduce two quantities in Sections \ref{se:NormTangDer} and \ref{se:ApprLocDil}, respectively the norm of the tangential derivative and the approximate local dilatation. 
In Section \ref{se:EqQuant}, we prove that the two coincide. 
Subsequently, we integrate them in Section \ref{se:TheNormEn} to obtain the normalized energy.

\subsection{The norm of the tangential derivative}
\label{se:NormTangDer}

Let $Y$ be a $w^*$-separable dual space and let $\mu = \theta \Ha^n \llcorner S$ be an $n$-rectifiable measure on $Y$. 
Let $f$ be a Lipschitz function on $S$. 
Since $S$ is rectifiable, $f$ is $\mu$-a.e. tangentially differentiable, see \cite[Theorem 8.1]{ambrosio_rectifiable_2000}. 
If it exists, we denote the tangential derivative of $f$ to $S$ at $x$ by $d_x^Sf$. 
Note that $d_x^Sf$ is a linear functional on $\mathrm{Tan}(S,x)$. 
We denote by $|d_x^S f|$ its dual norm. 

If $X$ is an arbitrary separable metric space and $\mu = \theta \Ha^n \llcorner S$ an $n$-rectifiable measure on $X$, we first isometrically embed $X$ into a $w^*$-separable dual space $Y$. 
Consider two such embeddings: $j_1:X \to Y_1$ and $j_2:X \to Y_2$.
By \cite{ambrosio_rectifiable_2000}, for $\mu$-a.e. $x \in X$, the approximate tangent spaces $\Tan(j_1(S),j_1(x))$ and $\Tan(j_2(S),j_2(x))$ are isometric. 
Therefore, one can give a meaning to $\Tan(S,x)$.
The isometry between the approximate tangent spaces induces an isometry between the dual spaces. 
Consequently, the functionals $d_{j_1(x)}^{j_1(S)} (f \circ j_1^{-1})$ and $d_{j_1(x)}^{j_2(S)} (f\circ j_2^{-1})$ are linked through this isometry, and it makes sense to define
\begin{equation}
|d_x^S f| = |d_{j_1(x)}^{j_1(S)} (f\circ j_1^{-1})|,
\end{equation}
for $\mu$-a.e. $x \in X$. 
Sometimes we will just write $df$ for the tangential derivative, and $|df|$ for its dual norm.

\subsection{Approximate local dilatation}
\label{se:ApprLocDil}

We may also give a different definition that does not mention approximate tangent spaces.

\begin{definition}
\label{de:ApDil}
Let $\mu = \theta \Ha^n \llcorner S$ be a rectifiable measure on a metric space $X$. Let $f$ be a Lipschitz function defined on $S$.
Define the set $D_x(t)$ by
\begin{equation}
D_x(t) = \left\{y \in X\backslash \{x\} \, | \, \frac{|f(y) - f(x)|}{d(y,x)} > t  \right\}.
\end{equation}
Then, we define the approximate local dilatation of $f$ at a point $x \in X$ by
\begin{equation}
\ald_x f := \inf \left\{ t > 0 \, | \, \Theta_n^*(\|T\| \llcorner D_x(t) , x) = 0 \right\}.
\end{equation}
\end{definition}
We note that $\ald(f)$ is a bounded, measurable function. 

\subsection{Norm tangential derivative equals approximate local dilatation}
\label{se:EqQuant}

In this section, we will show that for almost every $x$, the norm of the tangential derivative and the approximate local dilatation actually coincide.

First, however, we will present two lemmas that give a nice parametrization of $S$, and an integral current $T$ respectively.

\begin{lemma}[dividing a rectifiable set]
\label{le:dividemeasure}
Let $\mu = \theta \Ha^n \llcorner S$ rectifiable measure on a $w^*$-separable Banach space $Y$. 
Then there exist compact sets $K_i \subset \mathbb{R}^n$ ($i=1,2,\dots$) and Lipschitz maps $g_i:\R^n \to Y$ such that $g_i |_{K_i}$ is bi-Lipschitz, $g_i(K_i) \subset S$, and

\begin{itemize}
\item $g_i(K_i) \cap g_j(K_j) = \emptyset$ for $i \neq j$.
\item The $g_i(K_i)$ cover $\Ha^n$-almost all of $S$, that is,
\begin{equation}
\Ha^n\left( S \backslash \bigcup_{i=1}^\infty g_i(K_i) \right) = 0.
\end{equation}
\item The function $\theta \circ g_i$ is continuous on $K_i$.
\item For every $x \in K_i$, $g_i$ is metrically differentiable in $x$, with $J_k(md_x g_i) > 0$, and $x \mapsto md_x g_i$ is continuous, for some moduli of continuity $\omega_i$ and all $y,x \in K_i$,
\begin{equation}
\label{eq:UnifApprDist}
| d(g_i(y),g_i(x)) - md_x g_i (y-x) | \leq \omega_i(d(g_i(y),g_i(x))) d(g_i(y),g_i(x)).
\end{equation}
and
\begin{subequations}
\begin{alignat}{2}
\frac{1}{ 2 \sqrt{n} }\|y-x\| & < md_xg_i(y-x) & < \| y - x \|, \\
\frac{1}{ 2 \sqrt{n} } \| y - x\| & < d(g_i(y), g_i(x)) & < \|y - x\|.
\end{alignat}
\end{subequations}
\item For every $i$, the map $g_i$ is $w^*$-differentiable in every $x \in K_i$, and the map $x \mapsto wd_x g_i(v)$ is $w^*$-continuous in $K_i$ for every $v \in \R^n$.
\end{itemize}

\end{lemma}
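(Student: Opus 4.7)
The plan is to combine the Ambrosio--Kirchheim decomposition of rectifiable sets into bi-Lipschitz pieces with a pointwise John-ellipsoid normalization and a Lusin--Egorov regularization.

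First I would invoke the standard decomposition: by rectifiability of $S$ there is a countable family of Borel sets $B_j \subset \R^n$ and Lipschitz maps $h_j : \R^n \to Y$ with $h_j(B_j) \subset S$ pairwise disjoint and $\bigcup_j h_j(B_j)$ covering $\Ha^n$-almost all of $S$. After restricting each $B_j$ to the full-measure subset of metric and $w^*$-differentiability points at which $J_n(md_x h_j) > 0$---which discards an $\Ha^n$-null image by the area formula---I may assume $h_j|_{B_j}$ is bi-Lipschitz and that both differentials are defined pointwise and depend Borel-measurably on $x$.

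Next comes the Euclidean normalization. At each $x$ the unit ball of $md_x h_j$ admits a unique maximal-volume inscribed ellipsoid (Theorem \ref{th:JohnEllipsoid}), determined by a symmetric positive-definite $L_x \in GL(n,\R)$ depending Borel-measurably on $x$; Corollary \ref{co:JohnEllipsoid} then gives $\frac{1}{\sqrt{n}}|v| \leq md_x h_j(L_x v) \leq |v|$. I would partition $B_j$ into countably many Borel pieces on each of which $L_x$ is sufficiently close to a fixed invertible $L$, and set $g := h_j \circ L$; the discrepancy between $L_x$ and $L$ is absorbed into the relaxed constants $1/(2\sqrt{n})$ and $1$ in the required bounds on $md_x g$ and on $d(g(y),g(x))$. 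I would then apply Lusin's theorem on each piece to extract a compact subset on which $\theta \circ g$ and $x \mapsto md_x g$ (valued in the separable metric space of norms on $\R^n$) are continuous, and $x \mapsto wd_x g(v_k)$ is continuous into $(Y,d_w)$ for a countable dense $(v_k) \subset \R^n$; the uniform operator bound inherited from $\Lip(g)$ promotes the last to $w^*$-continuity of $x \mapsto wd_x g(v)$ for every $v \in \R^n$. Egorov's theorem, applied to the $\lebmeas^n$-a.e.\ convergence
\begin{equation}
\frac{|d(g(y),g(x)) - md_x g(y-x)|}{|y-x|} \;\xrightarrow{y \to x}\; 0,
\end{equation}
produces a further compact subset on which this convergence is uniform, which (after converting $|y-x|$ to $d(g(y),g(x))$ via the bi-Lipschitz bound) gives the modulus $\omega_i$ in \pref{eq:UnifApprDist}. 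Disjointness of the resulting $g_i(K_i)$ is arranged inductively by removing $g_i^{-1}\bigl(\bigcup_{j<i} g_j(K_j)\bigr)$ from $K_i$ and reapplying Lusin/Egorov to restore compactness; iterating on the residual exhausts $\Ha^n$-almost all of $S$.

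The main obstacle is coordinating the four regularizations (John normalization, Lusin for $md g$ and $wd g$, Egorov for the uniform approximation) without any one step destroying the estimates obtained by the others. This is handled by carrying them out sequentially on nested compact subsets, using countable Borel decompositions in between, and observing that every discarded Borel subset is automatically $\mu$-null (since $\mu \ll \Ha^n$), so that iteration on the residual continues to make measure-theoretic progress until $\Ha^n$-full coverage is attained.
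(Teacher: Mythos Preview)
Your proposal is correct and follows essentially the same route as the paper: start from the Ambrosio--Kirchheim bi-Lipschitz decomposition, then successively refine via Lusin's theorem (for continuity of $\theta\circ g_i$, $md_x g_i$, and $wd_x g_i$), Egorov's theorem (for the uniform modulus in \pref{eq:UnifApprDist}), and a John-ellipsoid linear change of coordinates (for the two-sided comparison to the Euclidean norm). The only cosmetic differences are that the paper performs the John normalization last rather than in the middle, and that it phrases the Egorov step by explicitly introducing the sequence $h_k(x) = \sup_{y\in K_i,\, 0<d(g_i(x),g_i(y))<1/k} \bigl| md_x g_i(y-x)/d(g_i(y),g_i(x)) - 1 \bigr|$ rather than speaking of uniform convergence of a two-variable quantity as $y\to x$; your formulation is equivalent once unpacked.
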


\begin{proof}
Compact sets $K_i \subset \R^n$ and maps $g_i:K_i \to Y$ that satisfy the first two items can be obtained as in \cite[Lemma 4.1]{ambrosio_currents_2000}. Because $Y$ is a Banach space, we may extend $g_i$ to a function $g_i : \R^n \to Y$ \cite{johnson_extensions_1986}. 
By Lusin's theorem, we may replace each $K_i$ by a countable union of compact sets that cover $K_i$ up to a set of $\Ha^n$-measure zero such that on these new sets the restriction of $\theta \circ g_i$ is continuous. Next, the compact sets thus obtained can each be divided in countably many other compact sets satisfying the other bullet points, except technically (\ref{eq:UnifApprDist}) and (\ref{eq:ComparisonToEuclidean}), by the work of Ambrosio and Kirchheim \cite[Theorem 3.3 and Remark 3.6]{ambrosio_rectifiable_2000} and the area formula \cite[Theorem 8.2]{ambrosio_rectifiable_2000}.

Equation (\ref{eq:UnifApprDist}) is very similar to the conclusion of \cite[Theorem 3.3]{ambrosio_rectifiable_2000}, and is proven in a similar way: for fixed $i$, we apply Egorov's theorem to the sequence of functions 
\begin{equation}
h_k(x) := \sup_{\substack{y \in K_i, y \neq x \\ d(g_i(x),g_i(y)) < 1/k}} \left|\frac{md_x g_i(y-x)}{d(g_i(y),g_i(x))} - 1 \right|,
\end{equation}
that converge to zero on $K_i$, to find sets $K_{ij}$ the union of which covers $K_i$ almost everywhere, and such that for moduli of continuity $\omega_{ij}$ and all $y \in K_{i}$ and $x \in K_{ij}$,
\begin{equation}
| d(g_i(y),g_i(x)) - md_x g_i (y-x) | \leq \omega_{ij}(d(g_i(y),g_i(x))) d(g_i(y),g_i(x)).
\end{equation}

That the inequalities in (\ref{eq:ComparisonToEuclidean}) can be guaranteed, after possibly dividing each compact set thus far obtained into at most countably many other compact sets up to a set of $\Ha^n$-measure zero, then follows from John's Ellipsoid Theorem (see also Corollary \ref{co:JohnEllipsoid}) after a possible linear transformation of the sets $K_i \subset \R^n$. 
The lemma follows after reindexing.
\end{proof}

We also give a slight variation to the previous lemma.

\begin{lemma}
\label{le:GoodParametrization}
Let $Y$ be a $w^*$-separable Banach space and let $T \in I_n(Y)$.
Then $T$ has a parametrization
\begin{equation}
T = \sum_{\ell=1}^\infty \vartheta_\ell (g_\ell)_\# \llbracket \chi_{K_\ell} \rrbracket,
\end{equation}
with $\vartheta_\ell \in \N$, compact sets $K_\ell \subset \R^n$ and functions $g_\ell: \R^n \to Y$ which satisfy
\begin{itemize}
\item $g_i(K_i) \cap g_j(K_j) = \emptyset$ for $i \neq j$.
\item The $g_i(K_i) \subset \set(T)$ and they cover $\Ha^n$-almost all of $\set(T)$, that is,
\begin{equation}
\Ha^n\left( S \backslash \bigcup_{i=1}^\infty g_i(K_i) \right) = 0.
\end{equation}
\item For every $x \in K_i$, $g_i$ is metrically differentiable in $x$, with $J_k(md_x g_i) > 0$, and $x \mapsto md_x g_i$ is continuous, for some moduli of continuity $\omega_i$ and all $y,x \in K_i$,
\begin{equation}
| d(g_i(y),g_i(x)) - md_x g_i (y-x) | \leq \omega_i(d(g_i(y),g_i(x))) d(g_i(y),g_i(x)).
\end{equation}
and
\begin{subequations}
\label{eq:ComparisonToEuclidean}
\begin{alignat}{2}
\frac{1}{ 2 \sqrt{n} }\|y-x\| & < md_xg_i(y-x) & < \| y - x \|, \\
\frac{1}{ 2 \sqrt{n} } \| y - x\| & < d(g_i(y), g_i(x)) & < \|y - x\|.
\end{alignat}
\end{subequations}
\item For every $i$, the map $g_i$ is $w^*$-differentiable in every $x \in K_i$, and the map $x \mapsto wd_x g_i(v)$ is $w^*$-continuous in $K_i$ for every $v \in \R^n$.
\end{itemize}
\end{lemma}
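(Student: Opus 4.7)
The plan is to deduce this statement from Lemma \ref{le:dividemeasure} applied to the mass measure $\|T\|$, combined with the structure theorem for integer rectifiable currents of Ambrosio and Kirchheim. Since $T \in \mathbf{I}_n(Y)$ is integer rectifiable, its mass measure is of the form $\|T\| = \theta \Ha^n \llcorner \set(T)$ for a positive Borel density $\theta$, and there is an integer-valued Borel multiplicity $\vartheta : \set(T) \to \N$ together with an orienting simple $n$-vector field such that, after the area formula is applied, $T$ coincides with the current naturally associated to $(\set(T), \vartheta)$. Thus what remains is to partition $\set(T)$ into bi-Lipschitz images of compact pieces of $\R^n$ with the required regularity properties and on which $\vartheta$ is constant.

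First I would apply Lemma \ref{le:dividemeasure} to the $n$-rectifiable measure $\|T\|$ on $Y$ to obtain compact sets $K_i \subset \R^n$ and Lipschitz maps $g_i : \R^n \to Y$ such that the $g_i(K_i) \subset \set(T)$ are pairwise disjoint, cover $\Ha^n$-almost all of $\set(T)$, and satisfy the continuity of $md_\cdot g_i$ and $wd_\cdot g_i(v)$, the approximation estimate (\ref{eq:UnifApprDist}), and the two-sided comparison inequalities (\ref{eq:ComparisonToEuclidean}). Each $g_i|_{K_i}$ is bi-Lipschitz onto its image.

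Next I would subdivide each $K_i$ according to the integer multiplicity. Because $g_i|_{K_i}$ is bi-Lipschitz, the function $\vartheta \circ g_i : K_i \to \N$ is Borel, so $K_i$ decomposes as the disjoint union of the countably many level sets $\{ \vartheta \circ g_i = k \}$, $k \in \N$. By inner regularity of $\Ha^n$ on $K_i$, each level set differs from a countable union of compact subsets by an $\Ha^n$-null set; these subsets inherit every bullet point of Lemma \ref{le:dividemeasure} from the original $K_i$ by restriction. Reindexing the resulting doubly-indexed collection as a single sequence $(K_\ell, g_\ell, \vartheta_\ell)_{\ell \in \N}$, with $\vartheta_\ell \in \N$ the constant value of $\vartheta \circ g_\ell$ on $K_\ell$, gives a family satisfying all the listed regularity, disjointness, and covering properties.

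Finally I would verify the current identity $T = \sum_\ell \vartheta_\ell (g_\ell)_\# \llbracket \chi_{K_\ell} \rrbracket$. Evaluating the right-hand side on an arbitrary $(f,\pi_1,\dots,\pi_n) \in \mathcal{D}^n(Y)$ gives, by the definition of $\llbracket \chi_{K_\ell} \rrbracket$ and the pushforward, $\sum_\ell \vartheta_\ell \int_{K_\ell} f \circ g_\ell \; \det \nabla(\pi \circ g_\ell) \, dx$. Using Theorem \ref{th:TangDiff} (the chain rule $wd_x(\pi_j \circ g_\ell) = d^{\set(T)}_{g_\ell(x)}\pi_j \circ wd_x g_\ell$) together with the area formula, this sum transforms into an integral over $\set(T)$ of $f\,\vartheta\,\langle d^{\set(T)}\pi_1 \wedge \dots \wedge d^{\set(T)}\pi_n, \tau\rangle$ with respect to $\Ha^n$, which is exactly the integral representation of $T$ provided by the structure theorem for integer rectifiable currents. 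The main technical obstacle is precisely this matching of Jacobians: one must keep track of the orientation and of the chain rule for tangential derivatives so that the integer-multiplicity representation of $T$ agrees piecewise with the pushforward formula, but given Theorem \ref{th:TangDiff} and the area formula, this is essentially bookkeeping.
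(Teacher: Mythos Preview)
Your approach is correct in spirit but differs from the paper's route and leaves one point underspecified.

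The paper does \emph{not} start from Lemma~\ref{le:dividemeasure} applied to $\|T\|$. Instead it invokes the parametrization theorem for integer rectifiable currents \cite[Theorem~4.5]{ambrosio_currents_2000} directly, which already gives $T = \sum_i (\tilde f_i)_\# \llbracket \tilde\theta_i \rrbracket$ with $\tilde\theta_i \in L^1(\R^n,\mathbb{Z})$ and pairwise disjoint bi-Lipschitz images. It then subdivides each $\tilde K_i$ into level sets of $\tilde\theta_i$, reflects in a coordinate axis where $\tilde\theta_i < 0$ to force $\vartheta_{ij} \in \N$, and only \emph{afterwards} applies the arguments of Lemma~\ref{le:dividemeasure} to obtain the metric and $w^*$ differentiability properties. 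In other words: the paper goes current $\to$ regularity, while you go regularity (of the set) $\to$ current.

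Your route works, but the step you call ``essentially bookkeeping'' is exactly the place where the paper's approach is cleaner. The charts $g_\ell$ you obtain from Lemma~\ref{le:dividemeasure} carry no information about the orientation of $T$: the pushforward $(g_\ell)_\# \llbracket \chi_{K_\ell} \rrbracket$ is oriented by $wd g_\ell$, which need not agree with the orienting $n$-vector $\tau$ of $T$ on $g_\ell(K_\ell)$. You must therefore argue that on each piece the two orientations differ by a sign, and then reflect $K_\ell$ in a hyperplane where necessary (possibly after a further subdivision to make the sign constant). This is not hard, but it should be stated rather than absorbed into ``bookkeeping''; the paper's approach sidesteps the issue because the orientation is built into the $\tilde\theta_i$ from the outset.
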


\begin{proof}
By \cite[Theorem 4.5]{ambrosio_currents_2000}, there exist a sequence of compact sets $\tilde{K}_i$, bi-Lipschitz maps $\tilde{f}_i:\tilde{K}_i \to Y$ and functions $\tilde{\theta}_i  \in L^1(\R^n, \mathbb{Z})$ such that 
\begin{equation}
T = \sum_{i=1}^\infty (\tilde{f}_i)_\# \llbracket \tilde{\theta}_i \rrbracket, 
\qquad \Mass(T) = \sum_{i=1}^\infty \Mass( (\tilde{f}_i)_\# \llbracket \tilde{\theta}_i \rrbracket ).
\end{equation}
In the proof of \cite[Theorem 4.5]{ambrosio_currents_2000}, the $\tilde{f}_i$ are chosen such that the images $\tilde{f}_i(\tilde{K}_i)$ are pairwise disjoint.
As $Y$ is a Banach space, we may extend the $\tilde{f}_i$ to Lipschitz functions defined on $\R^n$.

Since $\tilde{\theta}_i$ is integer-valued $\Ha^n$-a.e. there are compact sets $\tilde{K}_{ij} \subset \tilde{K}_i$ such that
\begin{equation}
\Ha^n\left( \tilde{K}_i  \backslash \bigcup_{j=1}^ \infty K_{ij} \right) = 0,
\end{equation}
such that $\tilde{\theta}_i$ is constant, equal to some $\vartheta_{ij} \in \mathbb{Z}$ on $\tilde{K}_{i j }$ for all $j \in \N$.
Moreover, for instance by mirroring $\tilde{K}_{ij}$ in one of the coordinate axes, we can assume that $\vartheta_{ij} \in \N$.

The remaining conclusions follow as in Lemma \ref{le:dividemeasure}.
\end{proof}

It follows by \cite[Section 9]{ambrosio_currents_2000} that when $T$ has a parametrization
\begin{equation}
T = \sum_{i=1}^\infty \vartheta_i (g_i)_\# \llbracket \chi_{K_i} \rrbracket,
\end{equation}
as in Lemma \ref{le:GoodParametrization}, then 
\begin{equation}
\|T\| = \sum_{i=1}^\infty \vartheta_i \lambda_{\Tan(g(K_i),.)} \Ha^n \llcorner g(K_i),
\end{equation}
where $\lambda_V$ denotes the area factor associated to a finite-dimensional Banach space $V$, defined by
\begin{equation}
\lambda_V := \frac{2^n}{\omega_n} \sup \left\{ \frac{\Ha^n(B_1)}{\Ha^n(R)}\, | \, V \supset R \supset B_1 \text{ parallelepiped } \right\},
\end{equation}
where $B_1$ is the unit ball in $V$. 
For any $n$-dimensional Banach spave $V$, $n^{-n/2} \leq \lambda_V \leq 2^k / \omega_k$.

In particular, by (\ref{eq:ComparisonToEuclidean}) and the area formula, there exists constants $c(n)$ and $C(n)$ depending only on the dimension such that for all $i \in \N$ and $A \subset K_i$, 
\begin{equation}
c(n) \|T\|(g_i(A)) \leq \lebmeas^n(A)  \leq C(n) \|T\|(g_i(A)).
\end{equation}

The main part of the following theorem was proven by Kirchheim \cite{kirchheim_rectifiable_1994}. 
The formulation in  \cite{ambrosio_rectifiable_2000} is slightly stronger.

\begin{theorem}[\cite{ambrosio_rectifiable_2000}]
Let $\mu = \theta \Ha^n \llcorner S$ be an $n$-rectifiable measure on a metric space $X$. 
Then
\begin{equation}
\Theta_n(\mu,x) = \theta(x), \qquad \text{ for } \Ha^n\text{-a.e. }x \in S.
\end{equation}
\end{theorem}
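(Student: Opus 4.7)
The plan is to reduce to a $w^*$-separable dual Banach space via the Kuratowski embedding and then compute the density on each piece of a good bi-Lipschitz parametrization. Embed $X$ isometrically into some $Y = G^*$; since $\Theta_n(\mu,\cdot)$ depends only on the metric, this is harmless. Apply Lemma \ref{le:dividemeasure} to $\mu$ to obtain a countable disjoint decomposition $S \setminus N = \bigsqcup_i g_i(K_i)$ with $\Ha^n(N) = 0$, where each $g_i : \R^n \to Y$ is Lipschitz, $g_i|_{K_i}$ is bi-Lipschitz, $\theta \circ g_i$ is continuous on $K_i$, $md\, g_i$ exists and is continuous on $K_i$ with $J_n(md_\xi g_i) > 0$, and the uniform approximation (\ref{eq:UnifApprDist}) holds.

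Fix $i$ and restrict attention to $x \in g_i(K_i)$. For $\Ha^n$-a.e. such $x$, three properties hold simultaneously: (a) $y := g_i^{-1}(x)$ is a point of Lebesgue density one of $K_i$; (b) $\Theta_n^*\bigl(\mu \llcorner \bigcup_{j \neq i} g_j(K_j),\, x\bigr) = 0$; and (c) $s := md_y g_i$ is a nondegenerate seminorm, hence a norm on $\R^n$, so that $\lebmeas^n(\{v \in \R^n : s(v) < r\}) = \omega_n r^n / J_n(s)$. Property (b) is a standard consequence of the area formula applied to the pullbacks of the disjoint pieces $g_j(K_j)$ via $g_i$, combined with the Lebesgue differentiation theorem in $\R^n$: the $g_j(K_j)$ together form a rectifiable set disjoint from $g_i(K_i)$ and therefore have negligible $n$-dimensional density at $\Ha^n$-a.e.\ point of $g_i(K_i)$.

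Given $\eta \in (0,1)$, the uniform estimate (\ref{eq:UnifApprDist}) together with continuity of $\xi \mapsto md_\xi g_i$ at $y$ supplies an $r_0 > 0$ such that for every $r < r_0$,
\begin{equation*}
g_i\bigl( \{\xi \in K_i : s(\xi - y) < (1 - \eta) r \} \bigr) \subset g_i(K_i) \cap B(x, r) \subset g_i\bigl( \{\xi \in K_i : s(\xi - y) < (1 + \eta) r \} \bigr).
\end{equation*}
Applying the area formula to $g_i|_{K_i}$ and invoking continuity of $\theta \circ g_i$ and of $md\, g_i$ at $y$, together with the Lebesgue density property (a), yields
\begin{equation*}
\mu\bigl(g_i(K_i) \cap B(x, r)\bigr) = \theta(x)\, J_n(s) \cdot \frac{\omega_n r^n}{J_n(s)} \cdot (1 + o_\eta(1)) = \omega_n r^n\, \theta(x)\, (1 + o_\eta(1))
\end{equation*}
as $r \downarrow 0$, with an error controlled by a quantity that vanishes as $\eta \to 0$. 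Combining with (b), $\mu(B(x,r))/(\omega_n r^n) \to \theta(x)$, and letting $\eta \to 0$ gives $\Theta_n(\mu,x) = \theta(x)$.

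The chief technical obstacle is property (b): separating the contributions of the different pieces $g_j(K_j)$ at a typical point of $g_i(K_i)$. The cleanest route is to observe, possibly after a further Lusin-type refinement of the partition, that each $g_i(K_i)$ is compact, that $g_i^{-1}\bigl(\bigcup_{j \neq i} g_j(K_j)\bigr)$ is a Lebesgue-null subset of $K_i$ (it misses $K_i$ entirely by disjointness), and that the pullback measure on $\R^n$ has density zero at Lebesgue-a.e. point of its complement; the uniform bi-Lipschitz comparison (\ref{eq:ComparisonToEuclidean}) then transports this vanishing to a density statement in $Y$. Once (b) is in hand, the computation in the previous paragraph is a routine application of the area formula and makes the seminorm factor $J_n(s)$ cancel against the Lebesgue volume of its unit ball, leaving exactly $\theta(x)$.
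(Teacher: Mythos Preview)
First, a remark: the paper does not itself prove this statement; it is quoted from Kirchheim and Ambrosio--Kirchheim and used as a tool. So there is no paper proof to compare against, only the question of whether your argument is complete.

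The genuine gap is property (b), which is in fact the hard half of the density theorem in metric spaces. Your justification---``the pullback measure on $\R^n$ has density zero at Lebesgue-a.e.\ point of its complement''---does not make sense as written: the measure $\nu := \mu \llcorner (S\setminus g_i(K_i))$ is supported \emph{off} $g_i(K_i)$, and there is no meaningful way to pull it back to $\R^n$ via $g_i$. Points of $\bigcup_{j\ne i} g_j(K_j)$ need not lie in $g_i(\R^n)$ at all, and even when they do, the preimage carries no information about how much $\nu$-mass sits in a small $Y$-ball around a point of $g_i(K_i)$. Knowing that $g_i^{-1}\bigl(\bigcup_{j\ne i} g_j(K_j)\bigr)\cap K_i=\emptyset$ is vacuous here. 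Since a general Banach space admits no Besicovitch covering theorem, you also cannot invoke Lebesgue differentiation for $\nu$ directly in $Y$.

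What is actually needed is a Vitali argument run in $\R^n$ and transported through the bi-Lipschitz chart. Use outer regularity and $\nu(g_i(K_i))=0$ to find an open $U\supset g_i(K_i)$ with $\nu(U)<\delta$. For $y\in E_\epsilon:=\{y\in K_i:\Theta_n^*(\nu,g_i(y))>\epsilon\}$ choose arbitrarily small $r$ with $\nu(B_r(g_i(y)))>\epsilon\,\omega_n r^n$ and $B_r(g_i(y))\subset U$; the Euclidean balls $B_{2\sqrt n\,r}(y)$ then form a Vitali cover of $E_\epsilon$. Extract a disjoint subfamily $\{B_{2\sqrt n\,r_k}(y_k)\}$ covering $E_\epsilon$ up to a null set; by (\ref{eq:ComparisonToEuclidean}) the corresponding $Y$-balls $B_{r_k}(g_i(y_k))$ are pairwise disjoint as well, so $\epsilon\,\omega_n\sum_k r_k^n\le\nu(U)<\delta$ and hence $\lebmeas^n(E_\epsilon)\le (2\sqrt n)^n\delta/\epsilon$. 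Letting $\delta\to 0$ and then $\epsilon\to 0$ gives (b). Once (b) is established this way, your single-chart computation via the area formula and the uniform estimate (\ref{eq:UnifApprDist}) is correct and finishes the proof.
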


As a by-product, it follows with the area formula that for $\Ha^n$-almost every $x \in S$, in the sense of density the neighborhood of $x$ in $S$ is approximately given by the image of one of the parametrization maps.

\begin{corollary}
\label{co:ParamDens}
Let $\mu = \theta \Ha^n \llcorner S$ be a rectifiable measure on a metric space $X$, and let compact sets $K_i \subset \R^n$ and maps $g_i: K_i \to \R^n$ be given as in Lemma \ref{le:dividemeasure}. Then, for $\Ha^n$-almost every $x \in S$, there exists a unique $i_x \in \mathbb{N}$ such that $x \in g_{i_x} (K_{i_x}) $ and moreover,
\begin{equation}
\Theta_n(\mu , x ) = \Theta_n(\mu \, \llcorner g_{i_x}(K_{i_x}) , x ) = \theta(x).
\end{equation}
\end{corollary}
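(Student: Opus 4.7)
The plan is to reduce the corollary to two applications of the density identification $\Theta_n(\mu,x) = \theta(x)$ from the preceding theorem: once to $\mu$ itself and once to each restriction $\mu \llcorner g_i(K_i)$.

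First I would dispatch the existence and uniqueness of $i_x$. Since Lemma~\ref{le:dividemeasure} guarantees that the images $g_i(K_i)$ are pairwise disjoint and their union exhausts $S$ up to an $\Ha^n$-null set, for $\Ha^n$-a.e.\ $x \in S$ there is exactly one index $i$ with $x \in g_i(K_i)$; call this index $i_x$. The exceptional set here is $\Ha^n$-null, hence $\mu$-null by absolute continuity.

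Next, I would establish the two density equalities. The preceding theorem applied to $\mu$ directly yields $\Theta_n(\mu,x) = \theta(x)$ for $\Ha^n$-a.e.\ $x \in S$. For the remaining identity, I would observe that each $g_i(K_i)$ is countably $\Ha^n$-rectifiable (being the image under a Lipschitz map of a compact subset of $\R^n$), so that
\begin{equation}
\mu \llcorner g_i(K_i) = \theta \, \Ha^n \llcorner g_i(K_i)
\end{equation}
is itself an $n$-rectifiable measure on $X$ with Borel density $\theta|_{g_i(K_i)}$ taking values in $(0,\infty)$. Applying the preceding theorem to each such restriction separately gives $\Theta_n(\mu \llcorner g_i(K_i), x) = \theta(x)$ for $\Ha^n$-a.e.\ $x \in g_i(K_i)$. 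Taking the countable union of the exceptional sets, together with the exceptional set from the first application and the null set where the covering of $S$ fails, still leaves only an $\Ha^n$-null set, so all of the identities hold simultaneously at $\Ha^n$-a.e.\ $x \in S$. This is exactly the claim of the corollary.

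There is no serious obstacle here; essentially all the work is already inside the preceding theorem. The only minor point to verify is that when the theorem is invoked on $\mu \llcorner g_{i_x}(K_{i_x})$ the resulting density $\Theta_n$ is computed using balls in the ambient metric of $X$, which matches precisely the quantity $\lim_{r\downarrow 0}\mu(B_r(x) \cap g_{i_x}(K_{i_x}))/(\omega_n r^n)$ implicit in the statement; this is immediate from the fact that $(\mu \llcorner A)(B) = \mu(A \cap B)$ for any Borel sets $A, B$, so no comparison of intrinsic versus ambient metrics on $g_{i_x}(K_{i_x})$ is needed.
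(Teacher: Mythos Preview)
Your argument is correct. The paper itself gives no detailed proof, only the one-line remark that the corollary ``follows with the area formula'' from the preceding density theorem; your approach of applying that density theorem once to $\mu$ and once to each restriction $\mu\llcorner g_i(K_i)$ is a clean and complete way to fill in what the paper leaves implicit, and it is in the same spirit since the density theorem is itself an area-formula statement.
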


We will now show how to calculate the density of a set closely related to $D_x(t)$.

\begin{lemma}
\label{le:CalcCone}
Let $X$ be a separable metric space, $\mu = \theta \Ha^n \llcorner S$ be a rectifiable measure on $X$, and $f$ be a Lipschitz function defined on $S$.
Define for $x \in S$,
\begin{equation}
D_x^>(t) := \left\{x' \in X\backslash \{x\} \, | \, \frac{f(x')-f(x)}{d(x',x)} > t \right\}.
\end{equation}
Let $j:X \to Y$ be an isometric embedding of $X$ into a $w^*$-separable dual space $Y$.

Then, for $\mu$-a.e. $x \in X$, for all $t \in \R$ such that $|t| \neq |d_x^S f|$, there holds
\begin{equation}
\Theta_n( \mu \llcorner D_x^>(t) , x ) = \theta(x) \Ha^n(W_x^>(t) \cap B_1(0)),
\end{equation}
where $W_{j(x)}^>(t) \subset \Tan(j(S),j(x))$ is the cone given by
\begin{equation}
W_{j(x)}^>(t) := \left\{ w \in \Tan(j(S),j(x)) \, | \, \left\| d_{j(x)}^{j(S)} (f\circ j^{-1}) (w) \right\| > t\| w \| \right\}.
\end{equation}
\end{lemma}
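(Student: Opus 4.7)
The plan is to work in a single parametrization chart around $x$ and reduce the density computation to a linear one by rescaling. By Corollary~\ref{co:ParamDens}, for $\mu$-a.e.\ $x \in S$ there is a unique index $i = i_x$ with $x = g_i(y)$ for some $y \in K_i$, and the density of $\mu$ at $x$ is carried entirely by $\mu \llcorner g_i(K_i)$; hence it suffices to compute $\Theta_n(\mu \llcorner g_i(K_i) \cap D_x^>(t), x)$. After discarding a further $\mu$-null set I may assume at the remaining $x = g(y)$ (writing $g := g_{i_x}$, $K := K_{i_x}$) that: (a) $\theta \circ g$ is continuous at $y$; (b) $y$ is a Lebesgue density point of $K$; (c) $g$ is metrically and $w^*$-differentiable at $y$ with $J_n(wd_y g) > 0$; and (d) the chain rule $wd_y(f \circ g) = d^S_x f \circ wd_y g$ from Theorem~\ref{th:TangDiff} holds. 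Embedding $X$ into a $w^*$-separable dual space at the outset lets me suppress $j$.

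Next I would pull everything back to $\mathbb{R}^n$ via $g$. By the area formula,
\begin{equation*}
\mu(g(A) \cap D_x^>(t) \cap B_r(x)) = \int_{A \cap g^{-1}(D_x^>(t) \cap B_r(x))} (\theta \circ g)(z) \, J_n(md_z g) \, d\lebmeas^n(z),
\end{equation*}
and the continuity of $\theta \circ g$ and of $z \mapsto J_n(md_z g)$ at $y$ allow both factors to be replaced by their values at $y$ with error $o(\lebmeas^n(K \cap B_r(y)))$. Using the uniform approximation (\ref{eq:UnifApprDist}), the bi-Lipschitz bounds (\ref{eq:ComparisonToEuclidean}), the identity $md_y g = \|wd_y g\|$, and the chain rule, one obtains, uniformly in $v$ with $|v| \leq C r$,
\begin{equation*}
\frac{f(g(y+v)) - f(g(y))}{d(g(y+v), g(y))} = \frac{d^S_x f(wd_y g(v))}{\|wd_y g(v)\|} + o(1)
\end{equation*}
as $r \downarrow 0$. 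Consequently the rescaled set $r^{-1}(g^{-1}(B_r(x) \cap D_x^>(t)) - y)$ converges in $\lebmeas^n$-measure, as $r \downarrow 0$, to
\begin{equation*}
U_y^>(t) := \bigl\{ v \in \mathbb{R}^n \,\big|\, \|wd_y g(v)\| \leq 1,\ d^S_x f(wd_y g(v)) > t \|wd_y g(v)\| \bigr\}.
\end{equation*}

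A final linear change of variables through $wd_y g$ pushes $U_y^>(t)$ onto $W_x^>(t) \cap B_1(0) \subset \Tan(S,x)$ with Jacobian $J_n(wd_y g) = J_n(md_y g)$, which cancels exactly the $J_n(md_y g)$ from the area formula. Collecting these pieces yields
\begin{equation*}
\Theta_n(\mu \llcorner D_x^>(t), x) = \lim_{r \downarrow 0} \frac{\mu(D_x^>(t) \cap B_r(x))}{\omega_n r^n} = \theta(x) \, \Ha^n(W_x^>(t) \cap B_1(0)),
\end{equation*}
once the normalization of $\Ha^n$ on the Banach tangent is interpreted through the same area formula applied to the linear chart $wd_y g$.

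The main obstacle is the passage from pointwise differentiability to convergence of \emph{Lebesgue measures of the pulled-back sets} after rescaling. This requires the uniform estimate (\ref{eq:UnifApprDist}), the continuity of $z \mapsto md_z g$, and $y$ being a Lebesgue density point of $K$ -- precisely the technical content built into Lemma~\ref{le:dividemeasure}. The hypothesis $|t| \neq |d^S_x f|$ enters exactly here: it ensures that the boundary of $U_y^>(t)$ is $\lebmeas^n$-null, so that small perturbations of the defining strict inequality do not alter the limiting measure. If $|t| = |d^S_x f|$, a positive-measure set of vectors could saturate the inequality and the rescaling limit would become ambiguous.
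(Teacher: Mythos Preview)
Your proof is correct and follows essentially the same route as the paper's: reduce to a single chart via Corollary~\ref{co:ParamDens}, pull back by the area formula, rescale, identify the pointwise limit of the characteristic function of $D_x^>(t)\cap B_r(x)$ (using that the boundary set where $\|wd_y g(v)\|=1$ or $d_x^S f(wd_y g(v)) = t\|wd_y g(v)\|$ is $\lebmeas^n$-null exactly when $|t|\neq |d_x^S f|$), and then recover $\Ha^n$ on the tangent space via the Jacobian of the linear chart $wd_y g$. The paper writes out the rescaled integral explicitly and passes to the limit by dominated convergence where you phrase it as convergence in measure of the pulled-back sets, but the content is the same.
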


\begin{proof}
Without loss of generality, we may assume that $X = \overline{S}$, and therefore, that $X$ is separable. 
Let $j: X \to Y$ be an isometric embedding of $X$ into a $w^*$-separable dual space $Y$. 
Define $\bar{f}:j(S) \to \R$ by $\bar{f} = f \circ j^{-1}$. 
Let us apply Lemma \ref{le:dividemeasure} to $j_\# \mu$ to obtain compact sets $K_i \subset \R^n$ and Lipschitz maps $g_i: \R^n \to Y$ with the properties mentioned in the Lemma. 
Let $y \in j(S)$.
By Corollary \ref{co:ParamDens}, we may assume that there exists an $i_y \in \mathbb{N}$ such that $y \in g_{i_y}(K_{i_y})$ and 
\begin{equation}
\label{eq:FullDens}
\Theta_n( j_\# \mu, y ) = \Theta_n(j_\# \mu \llcorner g_{i_y}(K_{i_y}) , y ) = \theta(x),
\end{equation}
where $x = j^{-1}(y)$. 
Without loss of generality, we may assume that $g_{i_y}^{-1}(y) = 0 \in \R^n$, and that $K_{i_y}$ has Lebesgue density $1$ at $0$.

Let $t \in \R$ be such that $|t| \neq |d_{y}^{j(S)} \bar{f}|$. 
Define the cone $C^>(t) \subset \R^n$ as
\begin{equation}
C^>(t) := wd_0 g_{i_y}^{-1} (W_y^>(t)),
\end{equation}
and let $B$ be the induced unit ball on $\R^n$,
\begin{equation}
B := wd_0 g_{i_y}^{-1} (B_1(0)).
\end{equation}
We calculate
\begin{equation}
\begin{split}
\Theta_{n} (\mu \llcorner D_x(t),x) 
	&= \lim_{r \downarrow 0} \frac{\mu \left( B_r(x) \cap D_x(t)\right) }{\omega_n r^n} \\
	&= \lim_{r \downarrow 0} \frac{j_\# \mu \left( g_{i_y}(K_{i_y}) \cap j(D_x(t)) \cap B_r(y) \right)}{\omega_n r^n} \\
	&= \lim_{r \downarrow 0} \frac{1}{\omega_n r^n} \int_{K_{i_y}} \theta(g_{i_y}(z)) \chi_{D_x(t) \cap B_r(x) }(j^{-1}(g_{i_y}(z))) 
						J_n(md_{z}g_{i_x}) dz\\
	&= \lim_{r \downarrow 0} \frac{1}{\omega_n} \int_{K_{i_y}/r} \theta( g_{i_y} ( rp ) ) 
						\chi_{ D_x(t) \cap B_r(x) } ( j^{-1}(g_{i_x}( rp) ) ) J_n(md_{rp}g_{i_x}) dp.
\end{split}
\end{equation}
Here, $\chi_A$ denotes the characteristic function of a set $A$.
For $p \in \R^n$ write $w_p := wd g_{i_y}(p)$.
Note that
\begin{equation}
\lim_{r \downarrow 0} \frac{d_X(j^{-1}(g_{i_y}(r p)),x)}{r}
= \lim_{r \downarrow 0} \frac{\|g_{i_y} (r p) - y\|_Y}{r} = \| w_y \|,
\end{equation}
and
\begin{equation}
\lim_{r \downarrow 0} \frac{f( j^{-1}(g_{i_y}(rp))) - f(x)}{d_X(j^{-1}(g_{i_y}(r p)),x)}
= \lim_{r \downarrow 0} \frac{\bar{f}(g_{i_y}(rp)) - \bar{f}(y)}{\|g_{i_y}(r p) - y\|_Y} 
  = \frac{d_{y}^{j(S)} f(w_p)}{\| w_p \|}. 
\end{equation}
Consequently,
\begin{equation}
\lim_{r \downarrow 0} \chi_{D_x(t) \cap B_r(x)} (j^{-1} (g_{i_y} (rp))) = 
  \begin{cases} 
    1 & \text{ if } \| w_p \| < 1 \text{ and } | d_{y}^{j(S)} \bar{f}(w_p) | > t \| w_p \|, \\
    0 & \text{ if } \| w_p \| > 1 \text{ or } |d_{y}^{j(S)} \bar{f}(w_p)| < t \| w_p \|.
  \end{cases}
\end{equation}
Although the limit may not exist for other values of $p$, the set 
\begin{equation}
\left\{ p \in \R^n \,| \, \| w_p\| = 1 \right\} 
  \cup \left\{ p \in \R^n \, | \, |d_{y}^{j(S)} \bar{f}(w_p)| = t \| w_p \| \text{ and } \| w_p \| \leq 1 \right\}
\end{equation}
has zero $n$-dimensional Lebesgue measure when $t \neq |d_{j(x)}^{j(S)}\bar{f}|$.
Since $\theta \circ j^{-1} \circ g_{i_y}$ and $z \mapsto md_zg_{i_y}$ are continuous and the set $K_{i_y}$ has Lebesgue density one at $0$, we find 
\begin{equation}
\begin{split}
\Theta_{n}& (\mu \llcorner D_x(t),x) \\
 &= \lim_{r \downarrow 0} \frac{1}{\omega_n} \int_{K_{i_x}/r} \theta(j^{-1} ( g_{i_y} ( rp )) ) 
						\chi_{ D_x(t) \cap B_r(x) } (j^{-1}( g_{i_y}( rp)) ) J_n(md_{rp}g_{i_y}) dp\\
  &= \lim_{r \downarrow 0} \frac{1}{\omega_n} \int_{C^>(t) \cap B} \theta(x) J_n(md_0 g_{i_y}) dp \\
  &= \frac{ \theta(x) }{\omega_n} J_n(md_0g_{i_y}) \mathcal{L}^n(C^>(t) \cap B) \\
  &= \theta(x) \Ha^n(W_{j(x)}^>(t) \cap B_1(0) ).
\end{split}
\end{equation}
\end{proof}

It follows that for almost every $x$, the approximate local dilatation and the norm of the weak tangential derivative coincide:

\begin{theorem}
Let $X$ be a separable metric space, and let $\mu = \theta \Ha^n \llcorner S$ be a rectifiable measure on $X$. 
Let $f$ be a Lipschitz function defined on $S$.
Then, for $\mu$-a.e. $x \in S$, $|d_x^S f| = \ald_x f$.
\end{theorem}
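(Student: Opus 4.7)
The plan is to deduce the theorem as an almost immediate corollary of Lemma \ref{le:CalcCone}. Fix a point $x \in S$ at which the conclusion of that lemma holds, at which $\theta(x) > 0$, and at which $f$ is tangentially differentiable; these conditions are jointly satisfied at $\mu$-a.e. point of $S$. Let $j : X \to Y$ be an isometric embedding into a $w^*$-separable dual space, and set $L := d_{j(x)}^{j(S)}(f \circ j^{-1})$, so that by definition $|d_x^S f|$ equals the dual norm $\sup_{0 \neq w} |L(w)|/\|w\|$ of the linear functional $L$ on the finite-dimensional normed space $\Tan(j(S), j(x))$.

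The core of the argument is a short monotonicity analysis of
\begin{equation}
W_{j(x)}^>(t) \cap B_1(0) = \bigl\{ w \in \Tan(j(S), j(x)) : \|w\| < 1,\ |L(w)| > t \|w\| \bigr\}
\end{equation}
as a function of $t > 0$. If $t > |d_x^S f|$, then $|L(w)| \leq |d_x^S f| \cdot \|w\| < t\|w\|$ for every $w \neq 0$, so the set is empty and has $\Ha^n$-measure zero. If $0 < t < |d_x^S f|$, the definition of the dual norm furnishes a $w_0 \neq 0$ with $|L(w_0)|/\|w_0\| > t$; since $L$ and $\|\cdot\|$ are continuous on the finite-dimensional space $\Tan(j(S), j(x))$, the displayed set is open and nonempty, hence has strictly positive $\Ha^n$-measure.

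Inserting this dichotomy into the conclusion of Lemma \ref{le:CalcCone}, namely $\Theta_n(\mu \llcorner D_x(t), x) = \theta(x) \Ha^n(W_{j(x)}^>(t) \cap B_1(0))$ for every $t$ with $|t| \neq |d_x^S f|$, and using $\theta(x) > 0$, we conclude that $\Theta_n^*(\mu \llcorner D_x(t), x)$ is strictly positive for $0 < t < |d_x^S f|$ and equals zero for $t > |d_x^S f|$. Comparing with the definition $\ald_x f = \inf \{ t > 0 : \Theta_n^*(\mu \llcorner D_x(t), x) = 0 \}$ then yields $\ald_x f = |d_x^S f|$.

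The main obstacle has already been absorbed into Lemma \ref{le:CalcCone}, whose proof performs the blow-up using the charts supplied by Lemma \ref{le:dividemeasure} together with the area formula in order to identify the density of $D_x(t)$ at $x$ with a purely infinitesimal quantity involving the tangential differential. Once that density formula is in hand, what remains is an elementary fact about linear functionals on finite-dimensional normed spaces. The only bookkeeping point is that Definition \ref{de:ApDil} is phrased in terms of a mass measure $\|T\|$, while here $\mu$ is a general rectifiable measure; this is harmless because only the zero/non-zero dichotomy of $\Theta_n^*$ enters, and at a point where $\theta(x) > 0$ the rescaling by $\theta$ preserves this dichotomy.
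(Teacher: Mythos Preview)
Your proof is essentially the same as the paper's: both deduce the result directly from Lemma \ref{le:CalcCone} by observing that the cone $W_{j(x)}^>(t)\cap B_1(0)$ has positive $\Ha^n$-measure for $0<t<|d_x^Sf|$ and vanishes for $t>|d_x^Sf|$, and then reading off the infimum in Definition \ref{de:ApDil}. One small point: Lemma \ref{le:CalcCone} as stated computes $\Theta_n(\mu\llcorner D_x^>(t),x)$ for the \emph{one-sided} set $D_x^>(t)=\{x':(f(x')-f(x))/d(x',x)>t\}$, whereas you quote it as giving the density of the two-sided set $D_x(t)$; the paper inserts the trivial bridge $\Theta_n(\mu\llcorner D_x(t),x)=2\,\Theta_n(\mu\llcorner D_x^>(t),x)$ via the symmetry $w\mapsto -w$ of the cone, which you should mention for precision but which changes nothing in the zero/non-zero dichotomy you use.
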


\begin{proof}
By Definition \ref{de:ApDil} we need to show that
\begin{equation}
\ald_x f = \inf\left\{ t>0 \, | \, \Theta_n(\mu\llcorner D_x(t),x) = 0\right\}  = |d_x^S f|.
\end{equation}
This equality immediately follows from Lemma \ref{le:CalcCone}, since by symmetry of the cone $W_{j(x)}$, for $0 < t < | d_x^S f |$, for $\mu$-a.e. $x \in S$,
\begin{equation}
\Theta_n(\mu\llcorner D_x(t) ,x) = 2 \Theta_n(\mu\llcorner D_x^>(t),x) = 2\theta(x) \Ha^n(W_{j(x)}^>(t) \cup B_1(j(x))) > 0,
\end{equation}
while for $t > |d_x^S f|$, 
\begin{equation}
\Theta_n(\mu\llcorner D_x(t) ,x) = 2 \Theta_n(\mu\llcorner D_x^>(t),x) = 2\theta(x) \Ha^n(W_{j(x)}^>(t) \cup B_1(j(x))) = 0.
\end{equation}
\end{proof}

\subsection{The Dirichlet energy}
\label{se:TheNormEn}

\begin{definition}
\label{de:NormEn}
Let $X$ be a complete metric space, and let $T \in I_n(X)$. Denote $S = \set(X)$ and let $f$ be a Lipschitz function defined on $S$. Then we define the energy of $f$ by
\begin{equation}
E_T(f) := \int_X |d_x^S f |^2 \, d\|T\| = \int_X \ald(f)^2 \, d\|T\|
\end{equation}
and the renormalized energy of $f$ by
\begin{equation}
\mathcal{E}_T(f) := \frac{E_T(f)}{\int_X |f|^2 \, d\|T\|},
\end{equation}
if $f$ is not $\|T\|$-a.e. equal to zero, and where $\ald(f)$ is determined with respect to the $n$-rectifiable measure $\|T\|$.
\end{definition}

The definition behaves well under isometric embeddings, as expressed by the following proposition.

\begin{proposition}
\label{pr:behaviormaps}
Suppose $X$ is a metric space and $T \in I_n(X)$. Let $j:X \to Z$ be an isometric embedding of $X$ into another metric space $Z$. Let $f$ be a Lipschitz function on $Z$. Then
\begin{equation}
\mathcal{E}_T(j^{\#} f) = \mathcal{E}_{j_{\#} T} (f).
\end{equation}

\end{proposition}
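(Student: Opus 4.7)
The plan is to reduce the statement to two separate identities, one for the numerator $E_T$ and one for the denominator $\int|f|^2\,d\|T\|$, and to exploit the fact that the approximate-local-dilatation formulation from Section \ref{se:ApprLocDil} depends only on the metric and the mass measure, not on an ambient Banach space.

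First I would record the auxiliary fact that for an isometric embedding $j:X\to Z$ of complete metric spaces and $T\in I_n(X)$, the mass measure of the push-forward equals the push-forward of the mass measure,
\begin{equation}
\|j_\# T\| = j_\# \|T\|.
\end{equation}
The inequality $\|j_\#T\|\le j_\#\|T\|$ is immediate from the mass bound \pref{eq:massbound} and $\Lip(j)=1$, while equality follows because both measures have the same total mass (isometric embeddings preserve mass by the area formula, cf. \cite{ambrosio_currents_2000}) and the first is dominated by the second. From this identity the denominator identity
\begin{equation}
\int_X |j^\# f|^2\, d\|T\| = \int_Z |f|^2\, d(j_\# \|T\|) = \int_Z |f|^2\, d\|j_\# T\|
\end{equation}
is a standard change-of-variables.

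Next I would prove the pointwise identity $\ald_x(j^\#f) = \ald_{j(x)} f$ for $\|T\|$-a.e.\ $x$, which then yields $E_T(j^\# f) = E_{j_\# T}(f)$ via the same push-forward. Writing $D_x^{j^\#f}(t)\subset X$ and $D_{j(x)}^f(t)\subset Z$ for the sets in Definition \ref{de:ApDil}, the isometry condition $d_X(y,x)=d_Z(j(y),j(x))$ gives
\begin{equation}
j\bigl(D_x^{j^\#f}(t)\bigr) = D_{j(x)}^f(t) \cap j(X),
\end{equation}
and $B_r^Z(j(x))\cap j(X) = j(B_r^X(x))$. Combining these with $\|j_\#T\|=j_\#\|T\|$, and using that $\|j_\#T\|$ is concentrated on $j(\set(T))\subset j(X)$ (hence the intersection with $j(X)$ costs nothing), one obtains
\begin{equation}
\|j_\#T\|\bigl(B_r^Z(j(x)) \cap D_{j(x)}^f(t)\bigr) = \|T\|\bigl(B_r^X(x) \cap D_x^{j^\#f}(t)\bigr)
\end{equation}
for every $r>0$. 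Dividing by $\omega_n r^n$ and taking $\limsup$ shows that the upper densities appearing in Definition \ref{de:ApDil} agree, which forces equality of the two infima defining $\ald$.

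Finally I would assemble the two identities: the numerator identity $E_T(j^\#f)=E_{j_\#T}(f)$ from integrating $(\ald_x(j^\#f))^2 = (\ald_{j(x)}f)^2$ against $\|T\|$ and changing variables, and the denominator identity above. Dividing gives $\mathcal{E}_T(j^\#f)=\mathcal{E}_{j_\#T}(f)$. The main subtle point, which is also the only nonroutine step, is the identity $\|j_\#T\|=j_\#\|T\|$ for isometric embeddings; once this is in hand, everything else reduces to the definition of $\ald$ and elementary measure-theoretic manipulations, and the fact that $\ald$ is defined purely in terms of distances makes the argument much cleaner than one working through tangent spaces and their tangential differentials would be.
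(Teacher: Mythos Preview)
Your argument is correct, and it takes a genuinely different route from the paper's. The paper works with the tangential-derivative formulation of the energy: it invokes the fact (set up in Section~\ref{se:NormTangDer}) that an isometric embedding induces an isometry of approximate tangent spaces, so that $|d_x^S (j^\# f)| = |d_{j(x)}^{j(S)} f|$ for $\|T\|$-a.e.\ $x$, together with $j_\#\|T\|=\|j_\#T\|$, and then concludes in one line. You instead exploit the approximate-local-dilatation formulation from Section~\ref{se:ApprLocDil}, observing that the sets $D_x(t)$ and the upper densities in Definition~\ref{de:ApDil} are built only from the distance and the mass measure, and hence transform transparently under isometric embeddings once $j_\#\|T\|=\|j_\#T\|$ is known. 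Your route avoids any reference to approximate tangent spaces or $w^*$-differentials and in fact yields the pointwise identity $\ald_x(j^\#f)=\ald_{j(x)}f$ for \emph{every} $x$, not merely almost every; the price is that it is slightly longer to write out. Both proofs hinge on the same nontrivial input, namely $\|j_\#T\|=j_\#\|T\|$, and your justification of this (domination plus equality of total masses) is the standard one.
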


\begin{proof}
Since $j:X \to Z$ is an isometric embedding, for $\|T\|$-a.e. $x\in X$
\begin{equation}
|d_x^S j^{\#} f|^2 = j^\# |d_{j(x)}^{j(S)} f | ^2
\end{equation}
and 
\begin{equation}
j_\# \|T\| = \| j_\# T \|.
\end{equation}
From this, the statement follows immediately.
\end{proof}

\begin{corollary}
If $\llbracket M \rrbracket$ is an integral current space induced by a Riemannian manifold $M$, the renormalized energy coincides with the usual Rayleigh quotient, that is, for every Lipschitz function $f$,
\begin{equation}
\mathcal{E}_{\llbracket M \rrbracket }(f) = \frac{\int_M |\nabla_M f|^2 d \mathrm{Vol}_M}{ \int_M |f|^2 d \mathrm{Vol}_M } .
\end{equation}
\end{corollary}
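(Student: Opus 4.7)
The plan is to reduce the identity to a pointwise statement and a measure identification, both of which follow from the fact that Riemannian tangent spaces are Hilbertian.

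First, I would use Proposition \ref{pr:behaviormaps} together with an isometric embedding $j : M \hookrightarrow \ell^\infty$ (Kuratowski, or any ambient Banach space) so that all of $M$, the current $T$ and the function $f$ sit inside a $w^*$-separable dual space; this reduces everything to a computation on $j(M)$. By Lemma \ref{le:GoodParametrization}, the set $\set(T)$ can be covered (up to $\mathcal{H}^n$-null sets) by images $g_i(K_i)$ of bi-Lipschitz charts with weakly differentiable parametrizations; on a smooth Riemannian manifold the standard smooth charts (or Riemannian normal coordinates) trivially qualify, so I can refine the charts of the lemma to smooth charts compatible with the Riemannian structure.

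Next I would identify $\|T\|$ with the Riemannian volume measure $\mathrm{Vol}_M$. By the formula
\begin{equation}
\|T\| = \sum_{i=1}^\infty \vartheta_i \, \lambda_{\Tan(g_i(K_i),\cdot)} \, \mathcal{H}^n \llcorner g_i(K_i)
\end{equation}
recalled after Lemma \ref{le:GoodParametrization}, the mass measure depends only on the Hausdorff measure and the area factor $\lambda_V$ of the tangent spaces $V = \Tan(j(M),j(x))$. For $M$ Riemannian, these tangent spaces, viewed as normed spaces inherited from the embedding, are isometric to $(T_xM, g_x)$, which is a Hilbert space. For a Hilbertian $n$-dimensional norm the inscribed John ellipsoid of the unit ball equals the unit ball itself, so $\lambda_V = 1$, and by the area formula applied to the charts $g_i$ (which, in the smooth Riemannian setting, reduces to the classical area formula $d\mathrm{Vol}_M = \sqrt{\det g_{ij}}\, dx$), we obtain $\|T\| = \mathrm{Vol}_M$ with multiplicity one.

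Next I would identify $|d_x^S f|$ with $|\nabla_M f|_{g_x}$. Theorem \ref{th:TangDiff} characterizes $d_x^S f$ by the compatibility $wd_y(f\circ g_i) = d_{g_i(y)}^S f \circ wd_y g_i$. For a smooth chart on $M$, this says precisely that $d_x^S f$, viewed as a linear functional on $T_xM$, coincides with the classical differential $df_x$. The norm on $\Tan(S,x)$ coming from the ambient embedding agrees with the Riemannian inner product (again because the embedding is isometric on the manifold and hence on tangent spaces), so the dual norm $|d_x^S f|$ equals $\sup_{|v|_{g_x}=1} |df_x(v)| = |\nabla_M f|_{g_x}$ by the Riesz representation theorem on the Hilbert space $T_xM$.

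Combining these two identifications in the defining formula
\begin{equation}
\mathcal{E}_{\llbracket M\rrbracket}(f) = \frac{\int_X |d_x^S f|^2\, d\|T\|}{\int_X |f|^2\, d\|T\|}
\end{equation}
yields the classical Rayleigh quotient. The only mildly delicate step is the identification $\|T\|=\mathrm{Vol}_M$; everything else is linear algebra on tangent spaces. I expect this to be the main technical point, but it is essentially bookkeeping once one observes $\lambda_V=1$ for Hilbertian $V$ and invokes the standard area formula on charts.
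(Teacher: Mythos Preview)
Your proposal is correct and follows essentially the same approach as the paper, which simply asserts that the result follows from Proposition~\ref{pr:behaviormaps} together with the pointwise identity $|\nabla_M f| = |df|$ for a.e.\ $x \in M$. You supply considerably more detail than the paper---in particular the measure identification $\|T\| = \mathrm{Vol}_M$ via $\lambda_V = 1$ for Hilbertian $V$, which the paper leaves implicit---but the underlying argument is the same (one small remark: the John-ellipsoid observation is not itself the reason $\lambda_V = 1$; that follows directly from the definition of the area factor when the unit ball is Euclidean).
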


\begin{proof}
This follows immediately from Proposition \ref{pr:behaviormaps} since $|\nabla_M f| = |d f|$ for almost every $x \in M$.
\end{proof}

\begin{remark}
In some cases, as in Gromov's work \cite{gromov_dimension_1988}, the normalized energy is introduced with the local dilatation rather than with the \emph{approximate} local dilatation. 
The local dilatation of a function $f:X \to \mathbb{R}$ in a point $x$ is defined as
\begin{equation}
\dil_x f = \limsup_{r \downarrow 0} \sup_{y \in B_r(x)\backslash\{x\}} \frac{|f(y) - f(x)|}{d(y,x)}.
\end{equation}
We note that these may have different values. Consider for instance the following example.

For a point $x \in \R^3$, and $r>0$, let $D_r(x)$ denote the disk
\begin{equation}
D_r(x) = \{(y_1,y_2,x_3) \in \R^3 \, | \, (y_1 - x_1)^2 + (y_2 - x_2)^2 < r^2 \}.
\end{equation}
Pick a dense sequence of points $p_i \in B_1(0)$, and a sequence $r_i \downarrow 0$ as $i \to \infty$ such that the current $T$ associated to 
\begin{equation}
D_1(0) \cup \bigcup_{i=1}^\infty D_{r_i}(p_i) 
\end{equation}
is normal. Now consider the function $f(x) = x_3$ restricted to $\supp T$. Then, for almost every $x\in D_1(0)$, $\dil_x f = 1$, while $\ald_x f = 0$.
\end{remark}

\section{Completion of bounded Lipschitz functions under $W^{1,2}$-norm}
\label{se:DefSobolev}

Let $X$ be a complete metric space, and let $T \in I_n(X)$. 
By the space $L^2(\|T\|)$ we mean the usual Hilbert space of (equivalence classes of) functions on $X$, that are square-integrable (w.r.t. $\|T\|$) with inner product
\begin{equation}
(f, g)_{L^2(\|T\|)} := \int_X f g \,  d\|T\|.
\end{equation}

We will denote by $\mathcal{T}_2^*(\|T\|)$, the Banach space of (equivalence classes of) covector fields endowed with the norm
\begin{equation}
\| \psi \|_{\mathcal{T}^*_2(\|T\|)}^2 := \int_X | \psi(x) |^2_{\Tan^*(\set(T),x)} d\|T\|.
\end{equation}
We note that for a general complete metric space $X$ (not necessarily a $w^*$-separable Banach space), this is still well-defined. 

We define the function space $W^{1,2}(\|T\|)$ as the completion of the set of bounded Lipschitz functions on $\supp T$ with respect to the norm $\|.\|_{W^{1,2}}$ given by
\begin{equation}
\begin{split}
\|f \|_{W^{1,2}}^2 
& := \int_X f^2 d\|T\| + \int_X \ald(f)^2 d\|T\| \\
& = \int_X f^2 d\|T\| + \int_X |df|^2 d\|T\| \\
& = \|f\|_{L^2(\|T\|)}^2 + \| df \|_{\mathcal{T}_2^*(T)}^2.
\end{split}
\end{equation}

We would like to identify $W^{1,2}(\|T\|)$ as a subset of $L^2(\|T\|)$. 
Let $\iota : W^{1,2}(\|T\|) \to L^2(\|T\|)$ denote the natural map of the completion to $L^2(\|T\|)$. 
We will record in Corollary \ref{co:InjectivityNaturalMap} below that $\iota$ is injective. 

Although the injectivity itself can be proven more directly, we will first derive the stronger result that any function $\iota(f)$, with $f \in W^{1,2}(\|T\|)$, is tangentially differentiable $\|T\|$-a.e.. 
The main ingredient in the proof is a Poincar\'{e}-like inequality that is stated in Theorem \ref{th:PoincareLike}.
The proof is quite technical, and is therefore postponed to the appendix. 
The main idea, however, is that if $T$ is an \emph{integral} current, generically, on a small enough scale, $T$ has only small boundary, and one-dimensional slices contain a connected component over which one can integrate to obtain Poincar\'{e} inequalities on sets that are hit by these components.

Every $f$ in $W^{1,2}(\|T\|)$ can be assigned a covector field $df$ in a natural way.

\begin{definition}
Every $f \in W^{1,2}(\|T\|)$ can be represented by a Cauchy sequence $f_i$ of bounded Lipschitz functions. 
We write $d f$ for the limit of $df_i$ in $\mathcal{T}^*_2(\|T\|)$.
\end{definition}

We now state and prove the $\|T\|$-a.e. tangential differentiability of $f$. 

\begin{theorem}[Tangential differentiability of $W^{1,2}(\|T\|)$-functions]
\label{th:TangDiffSob}
Suppose $Y$ is a $w^*$-separable Banach space and $T \in I_n(Y)$. 
Then, for $\|T\|$-a.e. $p \in Y$ there exists a $w^*$-continuous and linear map $L_p$, whose restriction to $\Tan(\set(T),p)$ reduces to $df$, and a Borel set $K_p \subset Y$, such that $\Theta_n^*(\|T\| \llcorner Y \backslash K_p, p) = 0$, and 
\begin{equation}
\lim_{R \to 0} \esssup_{q \in B_R(p) \cap K_p} \frac{1}{R}|\iota(f) (q) - (\iota(f)(p)  - L_p(q - p) )| = 0.
\end{equation}
\end{theorem}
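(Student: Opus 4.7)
My plan is to combine three ingredients: the tangential differentiability of Lipschitz functions (Theorem~\ref{th:TangDiff}), a canonical $w^*$-continuous linear extension obtained via the good parametrization of Lemma~\ref{le:GoodParametrization}, and the Poincar\'e-like inequality of Theorem~\ref{th:PoincareLike} to convert the $W^{1,2}$-Cauchy behavior of the Lipschitz approximants into essential-supremum control of the approximation error on small balls. I would begin by fixing a Cauchy representative $(f_n)$ of $f$ by bounded Lipschitz functions and passing to a fast subsequence with $\|df_{n+1} - df_n\|_{\mathcal{T}_2^*(\|T\|)} \leq 2^{-n}$, so that by a Borel--Cantelli argument on the tangent fibres one has $|d_p^S f_n - df(p)| \to 0$ for $\|T\|$-a.e.\ $p$. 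Using Lemma~\ref{le:GoodParametrization}, I would decompose $\set(T)$ (up to a $\|T\|$-null set) into disjoint images $g_\ell(K_\ell)$ of $w^*$-differentiable, bi-Lipschitz maps, and at $\|T\|$-a.e.\ $p$ fix the unique pre-image $y = g_\ell^{-1}(p) \in K_\ell$, which I can take to be a simultaneous Lebesgue density point of $K_\ell$ and of each function $|df_n - df|^2 \circ g_\ell$.

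Next I would construct $L_p$ explicitly. Because $wd_y g_\ell: \R^n \to Y$ is $w^*$-continuous and has rank $n$, selecting a dual basis in the predual $G$ (with $Y = G^*$) to its image vectors yields a $w^*$-continuous linear left inverse $P_y : Y \to \R^n$. I set
\[
L_p^n := (d_p^S f_n \circ wd_y g_\ell) \circ P_y, \qquad L_p := (df(p) \circ wd_y g_\ell) \circ P_y.
\]
Both are $w^*$-continuous linear functionals on $Y$, they restrict to $d_p^S f_n$ (respectively $df(p)$) on $\Tan(\set(T),p) = \mathrm{im}(wd_y g_\ell)$, and the operator-norm convergence of the tangential derivatives yields $\|L_p^n - L_p\|_{Y^*} \to 0$.

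The heart of the argument is then a combined estimate on the error at scale $R$. For each $n$, Theorem~\ref{th:TangDiff} applied to the Lipschitz $f_n$ provides a set $S_n^p$ with $\Theta_n^*(\|T\| \llcorner S_n^p, p) = 0$ and a modulus $\omega_n(R) \to 0$ such that
\[
|f_n(q) - f_n(p) - L_p^n(q-p)| \leq \omega_n(R)\, R \qquad \text{on } B_R(p) \cap (\set(T) \setminus S_n^p).
\]
Applying Theorem~\ref{th:PoincareLike} to the Lipschitz differences $f_m - f_n$ and letting $m \to \infty$ (using $f_m \to \iota(f)$ in $L^2(\|T\|)$ and $df_m \to df$ in $\mathcal{T}_2^*(\|T\|)$) should yield, at each small scale $R$, a good subset $G_{n,R,p} \subset B_R(p)$ with $\|T\|(B_R(p) \setminus G_{n,R,p}) = o(R^n)$ and
\[
\esssup_{q \in G_{n,R,p}} |(\iota(f) - f_n)(q) - (\iota(f) - f_n)(p)| \leq C R \left( \frac{1}{R^n} \int_{B_{CR}(p)} |df - df_n|^2 \, d\|T\| \right)^{1/2}.
\]
Lebesgue differentiation at $p$ sends the parenthesized average to $|df(p) - df_n(p)|^2$ as $R \to 0$, which vanishes in $n$. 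A diagonal choice $n(R) \to \infty$ slow enough that $\omega_{n(R)}(R)$ and the Lebesgue average at scale $R$ both tend to zero, combined with $|L_p^{n(R)}(q-p) - L_p(q-p)| \leq \|L_p^{n(R)} - L_p\|_{Y^*}\, R$ and a triangle inequality, produces the desired $o(R)$ control.

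Finally I would define $K_p$ by an Egorov-type diagonal: for a discrete sequence $R_k \downarrow 0$ with $n_k := n(R_k)$, set
\[
K_p := \bigl(\set(T) \setminus \textstyle\bigcup_n S_n^p\bigr) \cap \bigcap_k \bigl(G_{n_k, R_k, p} \cup (Y \setminus B_{R_k}(p))\bigr),
\]
and verify that the scale-by-scale loss $o(R_k^n)$ forces $\Theta_n^*(\|T\| \llcorner (Y \setminus K_p), p) = 0$. The hardest part will be precisely this diagonal/Egorov construction: Theorem~\ref{th:TangDiff}'s exceptional sets $S_n^p$ have zero density but depend on $n$, while the Poincar\'e good subsets depend on both $n$ and scale, so merging them into a single full-density $K_p$ carrying a uniform $o(1)$ estimate across all small scales demands careful calibration of the rates $n(R)$, $\omega_n$, and the Lebesgue error. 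A subsidiary technicality is the measurability of $p \mapsto P_y$, which I expect to follow from the $w^*$-continuity of $y \mapsto wd_y g_\ell$ in Lemma~\ref{le:GoodParametrization} together with a standard finite-dimensional linear-algebra selection.
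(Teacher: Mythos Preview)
Your overall architecture is sound, but there is a genuine gap at the step where you claim
\[
\esssup_{q \in G_{n,R,p}} |(\iota(f) - f_n)(q) - (\iota(f) - f_n)(p)| \leq C R \left( \frac{1}{R^n} \int_{B_{CR}(p)} |df - df_n|^2 \, d\|T\| \right)^{1/2}
\]
directly from Theorem~\ref{th:PoincareLike}. That theorem gives only an $L^1$ oscillation bound around the \emph{mean} $(\tilde h)_G$, not an $L^\infty$ bound around the value at $p$. Two separate mechanisms are missing. First, passing from the mean over $G_r$ to the pointwise value $\iota(f)(p)$ requires a telescoping argument across dyadic scales $r_j = 2^{-j}$: from the $L^1$ Poincar\'e estimate one first shows that the means $((\iota(f)-L_p)\circ g_1)_{G_{r_j}}$ form a Cauchy sequence and identify the limit (using that $p$ is a Lebesgue point). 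Second, converting the resulting $L^1$ bound at scale $r$, say of size $h(r)\cdot r$ with $h(r)\to 0$, into an essential-sup bound on a full-density set is a Chebyshev step: one keeps only the points where the integrand is below $\sqrt{h(r)}\,r$, losing at most $\sqrt{h(r)}(2r)^n$ in measure. Your displayed inequality asserts the outcome of both steps at once, with a threshold equal to the $L^1$ average itself, and Chebyshev at that threshold gives no measure gain at all.

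The paper's proof circumvents your diagonal $n(R)$ device entirely by subtracting the target functional rather than a Lipschitz approximant: it applies Theorem~\ref{th:PoincareLike} to $f_i - L_p$ (which is still Lipschitz) and splits the right-hand side as $|df_i - df| + |df - L_p|_{\Tan}$ plus off-chart terms. Sending $i\to\infty$ kills the first piece at every fixed scale, and Lebesgue differentiation makes the remaining pieces $o(1)$ as $r\to 0$; no coordination between $i$ and $r$ is needed. This yields directly the $L^1$ bound $o(r)$ that feeds the telescoping and the $\sqrt{\,}$-Chebyshev selection of $K_p$. Your route via the tangential differentiability of each $f_n$ and a diagonal choice $n(R)$ could in principle be made to work, but only after you reinstate the telescoping and Chebyshev steps and argue uniformity of the Lebesgue-point rates in $n$, which is exactly the bookkeeping the paper's approach avoids.
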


\begin{proof}
Let $T$ have the representation
\begin{equation}
T = \sum_{\ell=1}^\infty \vartheta_\ell (g_\ell)_\# \llbracket \chi_{K_\ell} \rrbracket,
\end{equation}
for $\vartheta_\ell \in \N$, Lipschitz $g_\ell:\R^n \to Y$ and $K_\ell \subset \R^n$ compact that satisfy the conclusions of Lemma \ref{le:GoodParametrization}. 
It suffices to show that the statement holds with $p = g_1(x)$ for $\lebmeas^{n}$-a.e. $x\in K_1$.

First note that for $\lebmeas^{n}$-a.e. $x \in K_1$, by a standard argument that involves the Vitali Covering Theorem (parallel to for instance \cite[Section 2.3, Theorem 1]{evans_measure_1991}),
\begin{equation}
\limsup_{R\to 0} \frac{1}{R^n} \int_{B_R(g_1(x))\backslash g_1(K_1)} |df|^2 d\|T\| = 0.
\end{equation}
By Cauchy-Schwarz also for $\lebmeas^n$-a.e. $x \in K_1$,
\begin{equation}
\lim_{R\to 0} \frac{1}{R^n} \int_{B_R(g_1(x))\backslash g_1(K_1)} |df| d\|T\| = 0.
\end{equation}
By the Lebesgue differentiation theorem, for $\lebmeas^n$-a.e. $x \in K_1$,
\begin{equation}
\limsup_{r \to 0} \frac{1}{r^n} \int_{K_1 \cap B_r(x)} | (g_1^\# df) - (g_1^\# df)(x) |  d\lebmeas^n = 0,
\end{equation}
where $(g_1^\# df)(x) = d_{g_1(x)} f \circ wd_x g_1 \in (\R^n)^*$ is the pull-back of $df$ under $g_1$ evaluated at $x$, and the norm in the integrand is the dual Euclidean norm induced by the Euclidean norm on $\R^n$. 

At such a point $x \in K_1$, that is also a Lebesgue point for $\iota(f)$, let $L_{g_1(x)}:Y \to \R$ be a linear and $w^*$-continuous extension of $df$ defined on $\Tan(\set(T),g_1(x))$. 
By the continuity of the map $z \to wd_z g_1$ on $K_1$, guaranteed by the last item in Lemma \ref{le:GoodParametrization}, it immediately follows that also
\begin{equation}
\limsup_{r \to 0} \frac{1}{r^n} \int_{K_1 \cap B_r(x)} | g_1^\# (L_{g_1(x)}|_{\Tan(\set{T}, g_1(.))}) - (g_1^\# df )(x)| d \lebmeas^n = 0.
\end{equation} 

Since $g_1$ satisfies (\ref{eq:ComparisonToEuclidean}) on $K_1$, it also follows that, with $R = (3 + \sqrt{n})r$,
\begin{equation}
\limsup_{r \to 0} \frac{1}{r^n} \int_{B_R(g_1(x)) \cap g_1(K_1)} | (L_{g_1(x)} |_{\Tan(\set T, .)}) - df  | d \|T\| = 0.
\end{equation}

We will use the notation
\begin{equation}
(h)_G := \frac{1}{\lebmeas^n(G) }\int_G h d\lebmeas^n,
\end{equation}
for the average of an integrable function $f : h \to \R$ on a Borel set $G$.

Let $\epsilon > 0$. 
It follows from the above and the Poincar\'{e}-like inequality in Theorem \ref{th:PoincareLike} that for $\lebmeas^n$-a.e. $x \in K_1$, there exists a function $r \mapsto \delta(r)$ such that $\delta(r) \downarrow 0$ as $r \to 0$, and for small enough $r$ there exists a $G_r \subset Q_r(x)$ with $\Ha^n(G_r) \geq (1- \delta(r)) (2r)^n$, such that for all $i= 1, 2, \dots$, with $R = (3 + \sqrt{n}) r$,
\begin{equation}
\begin{split}
\frac{2}{(2r)^n r} \int_{G_r} & | (\iota(f_i) - L_{g_1(x)})\circ g_1 - ((\iota(f_i) - L_{g_1(x)}) \circ g_1)_{G_r} | d\lebmeas^n \\
& \leq \frac{C(n) r}{r \|T\|(B_R(g_1(x)))} \int_{B_R(g_1(x))} |df_i - (L_{g_1(x)}|_{\Tan(\set T, .)})| d\|T\| \\
&  \leq \frac{\tilde{C}(n)}{r^{n}} \int_{B_R(g_1(x))} |df_i -df| d\|T\| \\
&\quad + \frac{\tilde{C}(n)}{r^{n}} \int_{B_R(g_1(x)) \cap g_1(K_1)} |df - (L_{g_1(x)}|_{\Tan(\set T, .)}) | d\|T\| \\
& \quad + \frac{\tilde{C}(n)}{r^{n}} \int_{B_R(g_1(x)) \backslash g_1(K_1)} |d (L_{g_1(x)}|_{\Tan(\set T, .)}) | d\|T\|\\
&\quad + \frac{\tilde{C}(n)}{r^{n}} \int_{B_R(g_1(x)) \backslash g_1(K_1)} |df| d\|T\|\\
& < \frac{\tilde{C}(n)}{r^{n}} \int_{B_R(g_1(x))} | df_i - df | d\| T \|+ \epsilon.
\end{split}
\end{equation}
Since $f_i \to \iota(f)$ in $L^2(\|T\|)$, and $df_i \to df$ in $\mathcal{T}^*_2(\|T\|)$, it holds in particular that
\begin{equation}
\limsup_{r \to 0} \frac{2}{(2r)^n r} \int_{G_r} |(\iota(f) - L_{g_1(x)}) \circ g_1 - ((\iota(f) - L_{g_1(x)} )\circ g_1)_{G_r}| d\lebmeas^n = 0.
\end{equation}
Set $r_j = 2^{-j}$. 
For large enough $j$, that is for small enough $\delta(r_j)$, the previous formula implies that
\begin{equation}
| ((\iota(f) - L_{g_1(x)}) \circ g_1)_{G_{r_{j+1}}} - ((\iota(f) - L_{g_1(x)}) \circ g_1)_{G_{r_j}}| \leq \omega(r_{j}) r_j ,
\end{equation}
for a function $\omega: (0,\infty) \to (0,\infty)$ with $\omega(r) \downarrow 0$ as $r \downarrow 0$.
By a telescoping argument, it holds that also
\begin{equation}
\frac{ ((\iota(f) - L_x) \circ g_1)_{G_r} - \iota(f)(g_1(x))}{r} \to 0, 
\end{equation}
so that $\lim_{r' \to 0} h(r') = 0$, with
\begin{equation}
h(r') := \sup_{0 < r \leq r' } \frac{1}{(2r)^{n} r} \int_{G_r} | \iota(f) \circ g_1 - \iota(f)(g_1(x)) - L_{g_1(x)}(g_1(.) - g_1(x)) | d \lebmeas^n = 0.
\end{equation}
We finally construct the set $K_{g_1(x)}$ mentioned in the lemma. 
Let still $r_j = 2^{-j}$. 
We may select compact sets $K_{r_j} \subset G_{r_j}$, such that for all $y \in K_{r_j}$,
\begin{equation}
\frac{1}{r_j} |\iota(f)(g_1(y)) - \iota(f)(g_1(x)) - L_{g_1(x)}(g_1(y) - g_1(x)) | < \sqrt{h(r_j)},
\end{equation}
while $\lebmeas^n(G_{r_j} \backslash K_{r_j}) \leq \sqrt{h(r_j)}(2r_j)^n$.
We define 
\begin{equation}
K_{g_1(x)} := g_1\left( \bigcup_j K_{r_j} \cap ( Q_{r_j}(x) \backslash Q_{r_{j+1}} (x) ) \right).
\end{equation}
\end{proof}

\begin{corollary}
\label{co:InjectivityNaturalMap}
Let $X$ be a complete metric space, and let $T \in I_n(X)$. 
Then, the natural map $\iota: W^{1,2}(\|T\|) \to L^2(\|T\|)$ is injective.
\end{corollary}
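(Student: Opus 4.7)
The plan is to deduce the injectivity of $\iota$ directly from Theorem~\ref{th:TangDiffSob}: it suffices to show that every $f \in W^{1,2}(\|T\|)$ with $\iota(f)=0$ in $L^2(\|T\|)$ satisfies $df = 0$ in $\mathcal{T}^*_2(\|T\|)$, for then $\|f\|_{W^{1,2}}=0$. First I would reduce to the Banach case by isometrically embedding $X$ into a $w^*$-separable Banach space $Y$ (for instance into $\ell^\infty$ via Kuratowski). Because $j_\#\|T\|=\|j_\#T\|$ and $|d(j^\#f)|=j^\#|df|$, the argument used to prove Proposition~\ref{pr:behaviormaps} promotes to the full $W^{1,2}$-norm, so $W^{1,2}(\|T\|)$, $L^2(\|T\|)$, and $\iota$ transfer isometrically to the pushforward current on $Y$.

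Next I apply Theorem~\ref{th:TangDiffSob} to $f$: for $\|T\|$-a.e.\ $p\in Y$ there are a $w^*$-continuous linear $L_p:Y\to\R$ with $L_p|_{\Tan(\set T,p)} = df(p)$ and a Borel set $K_p$ with $\Theta_n^*(\|T\|\llcorner Y\setminus K_p,p)=0$ satisfying the essential-supremum convergence. After redefining $\iota(f)$ on a $\|T\|$-null set so that $\iota(f)\equiv 0$ on $\supp\|T\|$, at $\|T\|$-a.e.\ such $p$ both $\iota(f)(p)=0$ and $\iota(f)$ vanishes $\|T\|$-a.e.\ on $B_R(p)\cap K_p$, so the conclusion of the theorem collapses to
\begin{equation}
\lim_{R\to 0}\ \esssup_{q \in B_R(p)\cap K_p} \frac{|L_p(q-p)|}{R} = 0.
\end{equation}

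The core step is then to infer $L_p|_{\Tan(\set T,p)}=0$. I use the good parametrization from Lemma~\ref{le:GoodParametrization}: at $\|T\|$-a.e.\ $p$, write $p=g_i(x)$ with $x$ a Lebesgue density point of $K_i$, at which $g_i$ is $w^*$-differentiable and $wd_xg_i(\R^n)=\Tan(\set T,p)$. The distance comparison (\ref{eq:ComparisonToEuclidean}) and the fact that $\|T\|$ is comparable to $(g_i)_\#\lebmeas^n\llcorner K_i$ on $g_i(K_i)$ allow me to pull the previous display back to a Euclidean essential supremum
\begin{equation}
\lim_{r\to 0}\ \esssup_{y \in K_i \cap Q_r(x)} \frac{|L_p(g_i(y)-g_i(x))|}{r} = 0.
\end{equation}
Since $L_p$ is $w^*$-continuous and linear, the $w^*$-differentiability of $g_i$ gives $L_p(g_i(y)-g_i(x)) = (L_p\circ wd_xg_i)(y-x) + o(|y-x|)$, where $L_p\circ wd_xg_i$ is a genuine linear functional on $\R^n$. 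Rescaling by $r$ and using that $K_i$ has Lebesgue density one at $x$, the sets $(K_i\cap Q_r(x)-x)/r$ exhaust the unit cube in Lebesgue measure, so the vanishing of the essential supremum forces $L_p\circ wd_xg_i\equiv 0$ on $\R^n$, hence $L_p$ vanishes on $\Tan(\set T,p)$. Therefore $df(p)=0$ for $\|T\|$-a.e.\ $p$, so $df=0$ in $\mathcal{T}^*_2(\|T\|)$, proving $\iota$ is injective.

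The chief obstacle is the last transition: turning an essential-supremum condition on $B_R(p)\cap K_p$ measured against $\|T\|$ into the vanishing of a linear functional on the approximate tangent space. The good parametrization is exactly the device that makes this possible: it simultaneously provides a bi-Lipschitz-type chart, a comparison between $\|T\|$ and Lebesgue measure, and the $w^*$-differentiability needed to freeze $L_p$ to a linear functional on $\R^n$, at which point the density-one property of $K_i$ eliminates the ``essential'' qualifier.
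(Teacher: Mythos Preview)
Your proposal is correct and follows exactly the paper's approach: reduce to a $w^*$-separable Banach space and invoke Theorem~\ref{th:TangDiffSob} to conclude that $\iota(f)=0$ forces $df=0$ $\|T\|$-a.e. The paper's proof is a single sentence (``the previous lemma immediately implies that $df=0$''), and what you have written is precisely the unpacking of that sentence; the only minor imprecision is that in the pulled-back essential supremum you should intersect with $g_i^{-1}(K_p)$ as well (which still has Lebesgue density one at $x$ by the density condition on $K_p$), but this does not affect the argument.
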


\begin{proof}
Without loss of generality, we may assume that $X$ is a $w^*$-separable Banach space. 
If $\iota(f) = 0$, the previous lemma immediately implies that $df = 0$ in $\|T\|$-a.e. $p \in X$. 
\end{proof}

From now on, we will occasionally identify $W^{1,2}(\|T\|)$ with its image under  the map $\iota$ in $L^2(\|T\|)$.

\begin{corollary}
\label{co:GeneralEqQuant}
Let $X$ be a complete metric space and let $T \in I_n(X)$. For every $f \in W^{1,2}(\|T\|)$,
$|df| = \ald(f)$, $\|T\|$-a.e..
\end{corollary}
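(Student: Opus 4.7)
The strategy is to reduce to the Lipschitz case treated in Section \ref{se:EqQuant} by exploiting the essential tangential expansion from Theorem \ref{th:TangDiffSob}. First, without loss of generality we may assume $X=Y$ is a $w^*$-separable Banach space, by isometrically embedding $X$ via the Kuratowski embedding and invoking Proposition \ref{pr:behaviormaps} together with the observation that $\ald$ depends only on $\|T\|$ and the values of the function on $\set(T)$, so both sides transform consistently under isometric embedding.

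Take $f\in W^{1,2}(\|T\|)$, and apply Theorem \ref{th:TangDiffSob}. This produces, for $\|T\|$-a.e.\ $p$, a $w^*$-continuous linear map $L_p$ whose restriction to $\Tan(\set(T),p)$ equals $df(p)$, together with a Borel set $K_p$ with $\Theta_n^*(\|T\|\llcorner(Y\setminus K_p),p)=0$, satisfying
\begin{equation*}
\lim_{R\to 0}\esssup_{q\in B_R(p)\cap K_p}\frac{1}{R}\bigl|\iota(f)(q)-\iota(f)(p)-L_p(q-p)\bigr|=0.
\end{equation*}
Next, fix such a $p$ that is additionally a point of density of one of the compact pieces $g_i(K_i)$ from Lemma \ref{le:GoodParametrization} (so that $\Tan(\set(T),p)$ and the normalized cone counts enter as in Corollary \ref{co:ParamDens}), and also satisfies the conclusions used throughout Lemma \ref{le:CalcCone}.

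The computation of $\ald_p(\iota(f))$ now proceeds almost verbatim as in Lemma \ref{le:CalcCone}, with the difference that one works with $\|T\|\llcorner K_p$ in place of the whole measure. Since $\Theta_n^*(\|T\|\llcorner(Y\setminus K_p),p)=0$, the contribution of $Y\setminus K_p$ to the densities of the sets $D_p(t)$ vanishes, so $\Theta_n^*(\|T\|\llcorner D_p(t),p)=\Theta_n^*(\|T\|\llcorner(D_p(t)\cap K_p),p)$. For $t<|L_p|_{\Tan(\set(T),p)}|=|df(p)|$, the cone $W_p^>(t)$ has positive $\Ha^n$-measure in $B_1(0)\subset\Tan(\set(T),p)$; up to an asymptotically negligible error controlled by the essential expansion, the change of variables of Lemma \ref{le:CalcCone} gives $\Theta_n^*(\|T\|\llcorner D_p(t),p)>0$. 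For $t>|df(p)|$, the essential expansion implies $\|T\|\bigl((D_p(t)\cap K_p)\cap B_R(p)\bigr)=0$ for all sufficiently small $R$, yielding $\Theta_n^*(\|T\|\llcorner D_p(t),p)=0$. Passing to $t\to|df(p)|$ from both sides forces $\ald_p(\iota(f))=|df(p)|$, which proves the claim.

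The main obstacle is the shift from genuine pointwise expansions available for Lipschitz functions to the merely essential expansion on $K_p$ available for $W^{1,2}$ functions: the set $D_p(t)$ can, a priori, be altered on a $\|T\|$-null set by changing the representative, and the expansion need not hold at any individual point. The key technical observation that defuses this issue is that the definition of $\ald$ uses only $\|T\|$-densities, which are insensitive to $\|T\|$-null modifications, and that $\Theta_n^*(\|T\|\llcorner(Y\setminus K_p),p)=0$ allows us to discard the set where the expansion fails. Once this is recognized, the remainder of the argument is a straightforward adaptation of the cone density computation already carried out for Lipschitz functions.
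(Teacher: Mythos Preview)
Your proposal is correct and follows precisely the route the paper indicates: the paper's own proof is the single sentence ``This corollary follows from Theorem \ref{th:TangDiffSob} by an argument as in Section \ref{se:EqQuant},'' and you have unpacked exactly that---reduce to a $w^*$-separable Banach space, invoke the essential tangential expansion of Theorem \ref{th:TangDiffSob}, and rerun the cone-density computation of Lemma \ref{le:CalcCone}, using $\Theta_n^*(\|T\|\llcorner(Y\setminus K_p),p)=0$ to discard the exceptional set and the fact that $\ald$ is a density quantity to absorb the passage from pointwise to essential control. Your identification of the main technical wrinkle (the expansion is only in $\esssup$ and scaled by $R$ rather than $d(q,p)$) and its resolution is accurate.
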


This corollary follows from Theorem \ref{th:TangDiffSob} by an argument as in Section \ref{se:EqQuant}.

\section{Lower-semicontinuity of the energy}
\label{se:LowSemEnergy}

By John's Ellipsoid Theorem (see also Corollary \ref{co:JohnEllipsoid}), on almost every dual to the approximate tangent space, there exists a Hilbert space norm $\| . \|_J$, the unit ball of which is the John ellipsoid inside the unit ball with respect to the norm on the dual tangent space, such that
\begin{equation}
\frac{1}{\sqrt{n}} \| v \|_J \leq | v | \leq \| v \|_J, \qquad v \in \Tan^*(S,x).
\end{equation}
We define the energy
\begin{equation}
E_T^J(f) = \int_X \|d f\|_J^2 d\|T\|.
\end{equation}
Clearly, $E_T^J$ is a convex quadratic form, moreover
\begin{equation}
\frac{1}{n} E_T^J(f) \leq E_T(f) = \int_X | d f|^2 d\|T\| \leq E_T^J(f).
\end{equation}

We define $W^{1,2}_J(\|T\|)$ as the completion of the set of bounded Lipschitz functions under the norm $\|.\|_{W^{1,2}_J}$ given by
\begin{equation}
\| f \|_{W^{1,2}_J}^2 := \|f\|^2_{L^{2}(\|T\|)} + E_T^J(f).
\end{equation} 
Note that $W^{1,2}_J(\|T\|)$ is a Hilbert space, and
\begin{equation}
\frac{1}{\sqrt{n}} \| . \|_{W_J^{1,2}} \leq \| . \|_{W^{1,2}} \leq  \| . \|_{W_J^{1,2}}.
\end{equation}
\begin{theorem}[lower semicontinuity of the energy]
\label{th:LowSCEnergy}
Let $X$ be a complete metric space and let $T \in I_n(X)$.
Let $f_j$ be a uniformly bounded sequence in $W^{1,2}(\|T\|)$, such that $\iota(f_j) \rightharpoonup f \in L^2(\|T\|)$ weakly in $L^2(\|T\|)$. 
Then, in fact, there is a unique $\tilde{f} \in W^{1,2}(\|T\|)$ such that $\iota(\tilde{f}) = f$. Moreover,
\begin{equation}
\liminf_{j \to \infty} E_T(f_j) = \liminf_{j \to \infty} \int_X | df_j |^2 d\| T \| \geq \int_X |d \tilde{f}|^2 d\|T\| = E_T(\tilde{f}).
\end{equation}
\end{theorem}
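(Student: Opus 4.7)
The plan is to use the Hilbertian companion $E_T^J$ coming from John's Ellipsoid Theorem as a stepping stone. The inequalities $\tfrac{1}{n} E_T^J(g) \leq E_T(g) \leq E_T^J(g)$ imply that $\|\cdot\|_{W^{1,2}}$ and $\|\cdot\|_{W^{1,2}_J}$ are equivalent norms on the set of bounded Lipschitz functions, so the two completions are canonically the same Banach space—but $W^{1,2}_J(\|T\|)$ is Hilbert, whereas $W^{1,2}(\|T\|)$ in general is not. Because the norms are equivalent they induce the same weak topology, and this is the mechanism that will let me exploit Hilbert-space weak compactness while working with $E_T$.

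After passing to a subsequence of $\{f_j\}$ realizing $\liminf_{j\to\infty} E_T(f_j)$, the uniform $W^{1,2}$-bound together with the above norm equivalence makes this subsequence bounded in the Hilbert space $W^{1,2}_J(\|T\|)$. I then extract a further subsequence $\{f_{j_k}\}$ converging weakly in $W^{1,2}_J$, and hence weakly in $W^{1,2}$, to some $\tilde f$. The inclusion $\iota: W^{1,2}(\|T\|) \to L^2(\|T\|)$ is bounded and linear, hence weakly continuous, so $\iota(f_{j_k}) \rightharpoonup \iota(\tilde f)$ weakly in $L^2(\|T\|)$. Uniqueness of weak $L^2$-limits then forces $\iota(\tilde f) = f$, and uniqueness of $\tilde f$ follows from the injectivity of $\iota$ recorded in Corollary~\ref{co:InjectivityNaturalMap}.

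It remains to show $E_T(\tilde f) \leq \liminf_j E_T(f_j)$. The functional
\begin{equation}
F(g) := \|dg\|_{\mathcal{T}_2^*(\|T\|)} = E_T(g)^{1/2}
\end{equation}
is a continuous seminorm on $W^{1,2}(\|T\|)$: the seminorm property follows from the linearity of the extended differential on $W^{1,2}$ together with Minkowski's inequality in $L^2$, while continuity is immediate because $F \leq \|\cdot\|_{W^{1,2}}$. Continuous convex functionals on Banach spaces are weakly lower semicontinuous, so the weak convergence $f_{j_k} \rightharpoonup \tilde f$ in $W^{1,2}$ gives $F(\tilde f) \leq \liminf_k F(f_{j_k})$; squaring yields the desired bound along the subsequence, and the choice of subsequence transfers it to the original sequence.

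The main subtlety, I expect, lies not in the lower semicontinuity step—which is a standard consequence of convexity once one has weak convergence in the right space—but in the identification step: matching the $L^2$-weak limit $f$ (given by hypothesis) with the $W^{1,2}$-weak limit $\tilde f$ (produced only along a subsequence). That identification hinges both on the norm equivalence supplied by John's ellipsoid, which is what transfers Hilbert-space reflexivity to $W^{1,2}$, and on the injectivity of $\iota$, which ensures that $\tilde f$ is unambiguously characterized by its image in $L^2(\|T\|)$.
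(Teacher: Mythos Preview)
Your proof is correct and follows essentially the same approach as the paper: both use the John-ellipsoid norm equivalence to pass to the Hilbert space $W^{1,2}_J(\|T\|)$, extract a weakly convergent subsequence there, identify its limit with $f$ via the injectivity of $\iota$, and then conclude lower semicontinuity from convexity of the energy. The only cosmetic difference is that the paper spells out Mazur's lemma explicitly (producing strongly convergent convex combinations of the $f_j$ and using convexity of $E_T$ on those), whereas you invoke the equivalent general principle that a continuous seminorm is weakly lower semicontinuous.
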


\begin{proof}
Note that since $E_T(f_j)$ is uniformly bounded, $E_T^J(f_j)$ is uniformly bounded as well. 
Therefore, as $W_J^{1,2}(\|T\|)$ is a Hilbert space, a subsequence of $f_j$ weakly converges to an element $\tilde{f}$ in $W^{1,2}_J(\|T\|)$.
By Mazur's lemma, a sequence of convex combinations $g_j$ of the $f_j$'s converges strongly in $W^{1,2}_J$ to $\tilde{f}$.
More precisely, there is a function $N:\mathbb{N} \to \mathbb{N}$ and nonnegative coefficients $\alpha(k)_\ell$ ($\ell = k, \dots, N(k))$, with 
\begin{equation}
\sum_{\ell = k}^{N(k)} \alpha(k)_\ell = 1, 
\end{equation}
such that the sequence $g_k$ defined by
\begin{equation}
g_k := \sum_{\ell = k}^{N(k)} \alpha(k)_\ell f_\ell,
\end{equation}
converges strongly to the function $\tilde{f} \in W_J^{1,2}(\|T\|)$.
In particular, $\iota(\tilde{f}) = f$.
The uniqueness of $\tilde{f}$ follows from the injectivity shown in Corollary \ref{co:InjectivityNaturalMap}.

Moreover, $g_j$ also converges strongly to $\tilde{f}$ in $W^{1,2}(\|T\|)$, and since $E_T(.)$ is convex,
\begin{equation}
\begin{split}
\liminf_{j\to\infty} \int_X |df_j|^2 d\|T\|
&\geq \liminf_{j\to\infty} \int_X |d g_j|^2 \|T\| \\
&= \int_X |d \tilde{f}|^2 \|T\|. 
\end{split}
\end{equation}
\end{proof}

\subsection{Approximate local dilatation is minimal relaxed gradient}

We will now compare the definition of the approximate local dilatation, with that of the minimal relaxed gradient as introduced by Ambrosio, Gigli and Savar\'{e} in \cite[Definition 4.2]{ambrosio_calculus_2014}. 
In fact, we will show in Theorem \ref{th:ApdilIsMinRelaxGrad} below that for integral currents, the two quantities coincide $\|T\|$-a.e.. It follows that the energy $E_T$ coincides with the Cheeger energy for $(X , d_X, \|T\|)$ \cite{ambrosio_calculus_2014,cheeger_differentiability_1999}.

Let us first recall the definition of a minimal relaxed gradient, and simplify it for the case at hand. Recall that for a function $f : X \to \R$ on a metric space $X$, we mean by $\dil_x f$ the quantity
\begin{equation}
\dil_x f := \limsup_{r \to 0} \, \sup_{y \in B_r(x) \backslash \{x\}} \frac{|f(y) - f(x)|}{d_X(x,y)}.
\end{equation}

\begin{definition}[Specified version of {\cite[Definition 4.2]{ambrosio_calculus_2014}}]
We say that $G \in L^2(\|T\|)$ is a relaxed gradient of $f \in L^2(\|T\|)$, if there exists a sequence $f_1, f_2, \dots$ of Lipschitz functions in $L^2(\|T\|)$ such that 
\begin{enumerate}
\item $f_n \to f$ in $L^2(\|T\|)$ and $\dil f_n$ weakly to $\tilde{G} \in L^2(\|T\|)$,
\item $\tilde{G} \leq G$ $\|T\|$-a.e. in $X$.
\end{enumerate}
\end{definition}

A function $G \in L^2(\|T\|)$ is called the minimal relaxed gradient if its $L^2(\|T\|)$ norm is minimal among relaxed gradients. The minimal relaxed gradient is guaranteed to exist \cite{ambrosio_calculus_2014}.

\begin{theorem}
\label{th:ApdilIsMinRelaxGrad}
Let $X$ be a complete metric space, and let $T \in I_n(X)$. 
Let $f \in L^2(\|T\|)$.
Then $f$ has a relaxed gradient in the sense of Ambrosio-Gigli-Savar\'{e} \cite[Definition 4.2]{ambrosio_calculus_2014}, if and only if $f \in W^{1,2}(\|T\|)$.
Moreover, the minimal relaxed gradient equals both $|df|$ and $\ald(f)$, $\|T\|$-a.e..
\end{theorem}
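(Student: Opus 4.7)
The plan is to exploit the uniqueness of the minimal relaxed gradient in the sense of Ambrosio--Gigli--Savar\'{e}. Since Corollary \ref{co:GeneralEqQuant} already identifies $|df|$ with $\ald(f)$ $\|T\|$-a.e.\ for every $f \in W^{1,2}(\|T\|)$, the substantive content of the theorem is to show that the minimal relaxed gradient $G_{\min}$ coincides $\|T\|$-a.e.\ with $|df|$. I would establish this through two inclusions: (i) if $f \in W^{1,2}(\|T\|)$, then $|df|$ is itself a relaxed gradient of $f$; and (ii) if $f$ admits any relaxed gradient $G$, then $f \in W^{1,2}(\|T\|)$ and $\bigl\||df|\bigr\|_{L^2(\|T\|)} \leq \|G\|_{L^2(\|T\|)}$. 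Assertions (i) and (ii), combined with the uniqueness of $G_{\min}$, then force $|df| = G_{\min}$ $\|T\|$-a.e.

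For (ii), I would start with a witnessing Lipschitz sequence $\{f_n\}$: $f_n \to f$ in $L^2(\|T\|)$ and $\dil f_n \rightharpoonup \tilde{G} \leq G$ weakly in $L^2$. Mazur's lemma applied to $\{\dil f_n\}$ produces convex combinations $h_n = \sum_\ell \alpha^n_\ell \dil f_\ell$ with $h_n \to \tilde{G}$ strongly in $L^2$; setting $g_n = \sum_\ell \alpha^n_\ell f_\ell$ yields Lipschitz functions with $g_n \to f$ in $L^2$, and the subadditivity of $\dil$ gives $\dil g_n \leq h_n$, so $\limsup_n \|\dil g_n\|_{L^2} \leq \|\tilde G\|_{L^2} \leq \|G\|_{L^2}$. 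For each Lipschitz $g_n$ the pointwise inequality $|dg_n| = \ald(g_n) \leq \dil(g_n)$ (the first equality from Corollary \ref{co:GeneralEqQuant}, the second from the definitions) shows $\{g_n\}$ is bounded in $W^{1,2}(\|T\|)$, so Theorem \ref{th:LowSCEnergy} yields $f \in W^{1,2}(\|T\|)$ together with $\bigl\||df|\bigr\|_{L^2} \leq \liminf_n \|dg_n\|_{L^2} \leq \|G\|_{L^2}$.

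For (i), by the density of bounded Lipschitz functions in $W^{1,2}(\|T\|)$ and a diagonal argument, it suffices to show that every bounded Lipschitz $\phi$ admits Lipschitz $\phi_j \colon X \to \R$ with $\phi_j \to \phi$ in $L^2(\|T\|)$ and $\dil \phi_j \to |d\phi|$ in $L^2(\|T\|)$. The natural approach uses Lemma \ref{le:GoodParametrization}: decompose $T$ into bi-Lipschitz pieces $g_\ell(K_\ell) \subset \set(T)$, pull $\phi$ back to $K_\ell \subset \R^n$, mollify by standard Euclidean convolution, and push forward. The comparability estimates \pref{eq:ComparisonToEuclidean} and \pref{eq:UnifApprDist} ensure that on each parametrized piece the pushforward has local dilatation asymptotically equal to the norm of its tangential derivative, which converges in $L^2$ to $|d\phi|$ as the mollification parameter tends to zero. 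I expect the main obstacle to lie in the globalization of these piecewise constructions to a single Lipschitz function on $X$: the $g_\ell(K_\ell)$ only cover $\set(T)$ up to an $\Ha^n$-null set, and the local dilatation at a point $x \in \set(T)$ samples $\phi_j$ on a full ball $B_r(x) \subset X$, including points outside the parametrized pieces and outside $\set(T)$ altogether. The plan for this step is to truncate to finitely many pieces, glue by a partition of unity on $X$, extend off the pieces by a McShane-type Lipschitz extension, and argue that the sets where $\dil \phi_j$ receives an uncontrolled contribution from the extension have $\|T\|$-measure that vanishes as $j \to \infty$.

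Combining (i) and (ii) concludes the proof: by (i), $|df|$ is a relaxed gradient, so the minimality of $G_{\min}$ gives $\|G_{\min}\|_{L^2(\|T\|)} \leq \bigl\||df|\bigr\|_{L^2(\|T\|)}$, while applying (ii) with $G = G_{\min}$ reverses this inequality; the two $L^2$-norms therefore coincide. The uniqueness of the minimal relaxed gradient in \cite{ambrosio_calculus_2014} forces $|df| = G_{\min}$ $\|T\|$-a.e., and Corollary \ref{co:GeneralEqQuant} completes the identification with $\ald(f)$.
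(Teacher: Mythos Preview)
Your overall strategy is exactly the paper's: establish (i) that $|df|$ is itself a relaxed gradient, and (ii) that $\bigl\||df|\bigr\|_{L^2} \le \|G\|_{L^2}$ for every relaxed gradient $G$, then invoke uniqueness of the minimal relaxed gradient. Your argument for (ii) via Mazur's lemma is correct and gives a self-contained alternative to the paper's one-line appeal to \cite[Lemma~4.3]{ambrosio_calculus_2014}, which already furnishes a Lipschitz sequence with $\dil f_j \to G_{\min}$ \emph{strongly} in $L^2(\|T\|)$; either route feeds into Theorem~\ref{th:LowSCEnergy} in the same way.

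The gap is in your plan for (i). Mollifying $\phi \circ g_\ell$ on $K_\ell$ and pushing forward controls the \emph{tangential derivative} of the result, but not its local dilatation in $X$. After a McShane extension with the piece's Lipschitz constant $c_\ell := \Lip(\phi_j|_{g_\ell(K_\ell)})$, one obtains $\dil_p \phi_j \le c_\ell$ for $p$ in the piece, and $c_\ell$ is governed by $\sup_{g_\ell(K_\ell)} |d\phi_j|$, not by $|d\phi|(p)$. Since $\dil$ is a pointwise supremum over \emph{all} nearby points in $X$, a density argument (``the bad set has small $\|T\|$-measure'') does not help here: a single nearby point in the extension region already forces $\dil_p \phi_j$ up to $c_\ell$. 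Mollification does nothing to make $c_\ell$ close to $|d\phi|(p)$.

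What the paper does instead is drop mollification entirely and invoke Lemma~\ref{le:dividecurrent}: given the bounded Lipschitz approximants $\tilde f_i$ to $f$, one refines the chart decomposition so that on each piece $g_\ell(K_\ell)$ the oscillation of $|d\tilde f_i|$ is controlled, namely $(c^\ell_i)^2 - \epsilon \le |d\tilde f_i|^2 \le (c^\ell_i)^2$ with $c^\ell_i = \Lip(\tilde f_i|_{g_\ell(K_\ell)})$. Then the McShane-type extension carried out in the proof of Theorem~\ref{th:SemContMinMaxFlat} yields a bounded Lipschitz $f_i$ on $Y$ agreeing with $\tilde f_i$ on each piece and satisfying $|d\tilde f_i| \le \dil f_i \le c^\ell_i$ there, hence $\bigl|(\dil f_i)^2 - |d\tilde f_i|^2\bigr| \le \epsilon$ on $\bigcup_\ell g_\ell(K_\ell)$, while $\dil f_i \le 2\Lip(\tilde f_i)$ on the complement of mass $< \epsilon$. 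Diagonalizing over $i$ and $\epsilon$ gives $\dil f_i \to |df|$ in $L^2(\|T\|)$. The essential point your sketch misses is that the pieces must be chosen \emph{adapted to $\phi$} so that the piecewise Lipschitz constant already matches $|d\phi|$ pointwise to within $\epsilon$; this is precisely the content of Lemma~\ref{le:dividecurrent}, and it replaces your mollification step.
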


\begin{proof}
Without loss of generality, we may assume that $X$ is $w^*$-separable.

If $f \in W^{1,2}(\|T\|)$ there is a sequence of bounded Lipschitz functions $\tilde{f}_i$ such that $\tilde{f}_i \to f$ in $L^2(\|T\|)$ and $d \tilde{f}_i \to df$ in $\mathcal{T}^*_2(\|T\|)$. 
We may use Lemma \ref{le:dividecurrent} below, combined with a construction as in the proof of Theorem \ref{th:SemContMinMaxFlat}, to approximate every $\tilde{f}_i$ by a bounded Lipschitz function $f_i$, such that in fact $f_i \to f$ and $\dil f_i \to |df|$ in $L^2(\|T\|)$. 
This shows that $|df|$ is a relaxed gradient of $f$. 
Hence $f$ also has a minimal relaxed gradient.

Suppose $f \in L^2(\|T\|)$ has a relaxed gradient. 
Then it also has a minimal relaxed gradient, $G$ say.
According to \cite[Lemma 4.3]{ambrosio_calculus_2014}, there is a sequence of bounded Lipschitz functions $f_j \in L^2(\|T\|)$ such that $f_j \to f$ strongly in $L^2(\|T\|)$ and $\dil f_j \to G$ in $L^2(\|T\|)$.
In particular, $\dil f_j$ is uniformly bounded in $L^2(\|T\|)$, so that $E_T(f_j)$ is uniformly bounded, and therefore, by the semicontinuity shown in Theorem \ref{th:LowSCEnergy}, $f \in W^{1,2}(\|T\|)$, and
\begin{equation}
\int_X |df|^2 d\|T\| \leq \liminf_{j \to \infty} \int_X |df_j|^2 d\|T\| = \int_X G^2 d \|T\|.
\end{equation}
By the minimality of $G$, $G = |df|$, $\|T\|$-a.e.. By Corollary \ref{co:GeneralEqQuant}, also $|df| = \ald(f)$ $\|T\|$-a.e..
\end{proof}

\section{Semicontinuity of min-max values}
\label{se:minmax}

We define
\begin{equation}
\mathcal{V}(\|T\|) = \left\{f \in W^{1,2}(\|T\|) \, | \, \int_X f \, d \| T \| = 0 \right\},
\end{equation}
and the first eigenvalue of a current $T$, $\lambda_1$, as the smallest critical point of $\mathcal{E}_T$. 
That is,
\begin{equation}
\label{eq:InfEn}
\lambda_1 = \inf_{f \in \mathcal{V}(\|T\|)} \mathcal{E}_T(f).
\end{equation}
We also define higher order min-max values $\lambda_k$
\begin{equation}
\label{eq:minmax}
\lambda_k(T) := \inf_{\substack{\{\phi_1 , \dots , \phi_{k}\} \subset \mathcal{V}(\|T\|) \\ L^2(\| T\| )-\text{orthonormal} }} \sup_{i=1,\dots,k} \mathcal{E}_T(\phi_i).
\end{equation}
Recall that this corresponds to the Rayleigh quotient on Riemannian manifolds. 
Semicontinuity of these values holds in the general case as well. Before we prove this, we first state a helpful lemma.

\begin{lemma}[dividing a rectifiable set, the domain of Lipschitz functions]
\label{le:dividecurrent}
Let $Y$ be a $w^*$-separable dual space, let $T \in I_n(Y)$ and let $f_1, \dots, f_k \in \Lip(S)$, where $S=\set(T)$. 
Moreover, let $\epsilon > 0$. 
Then there exist compact sets $K_i \subset \mathbb{R}^n$ and bi-Lipschitz maps $g_i: K_i \to Y$ such that the $g_i$ and $K_i$ have all the properties of Lemma \ref{le:dividemeasure} with respect to $\mu = \|T\|$, and additionally, for every $i \in \mathbb{N}$, $m = 1, \dots, k$, the functions $x \mapsto |d_x^S f_m(x)|$ restricted to $g_i(K_i)$ are continuous, and for moduli of continuity $\tilde{\omega}_i$, and all $y,z \in K_i$,
\begin{multline}
\label{eq:unimod}
\left| (f_m\circ g_i)(y) - (f_m\circ g_i)(z) - d_z(f_m\circ g_i)(y-z) \right| \\ \leq \tilde{\omega}_i(\| g_i(y) - g_i(z)\|) \, \|g_i(y) - g_i(z)\|, 
\end{multline}
and therefore in particular, it can be assured that, with $c_m^i := \Lip(f_m|_{g_i(K_i)})$,
\begin{equation}
\label{eq:oscbound}
(c_m^i)^2 - \epsilon \leq |d_x^S f_m|^2 \leq (c_m^i)^2, \qquad \text{ for all } x \in g_i(K_i).
\end{equation}
\end{lemma}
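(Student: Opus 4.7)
The strategy is to start from the decomposition furnished by Lemma~\ref{le:dividemeasure} applied to $\mu=\|T\|$, and then successively refine each compact piece $K_i$ by cutting out at most an $\mathcal{L}^n$-negligible subset and reindexing at the end. All properties listed in Lemma~\ref{le:dividemeasure} are hereditary under passage to compact subsets, so it suffices to add the three new requirements without disturbing the old ones. The first refinement handles the continuity of $x\mapsto|d^S_{g_i(x)}f_m|$: each of these $k$ functions is bounded and Borel on $K_i$, so by Lusin's theorem (applied finitely many times), we may replace each $K_i$ by countably many compact subsets whose union covers $K_i$ up to an $\mathcal{L}^n$-null set and on which all $k$ maps are simultaneously continuous.

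The second refinement yields (\ref{eq:unimod}) via an Egorov argument that is a direct analogue of the one used in the proof of Lemma~\ref{le:dividemeasure} for (\ref{eq:UnifApprDist}). By Theorem~\ref{th:TangDiff}, the Lipschitz composition $f_m\circ g_i$ is classically differentiable at $\mathcal{L}^n$-a.e.\ $x\in K_i$ with $wd_x(f_m\circ g_i)=d^S_{g_i(x)}f_m\circ wd_x g_i$, so the error quotients
\begin{equation*}
h^{m,i}_j(x) := \sup_{\substack{y\in K_i,\ y\neq x\\ \|g_i(y)-g_i(x)\|<1/j}} \frac{|(f_m\circ g_i)(y)-(f_m\circ g_i)(x)-wd_x(f_m\circ g_i)(y-x)|}{\|g_i(y)-g_i(x)\|}
\end{equation*}
tend to zero $\mathcal{L}^n$-a.e.\ on $K_i$ as $j\to\infty$. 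Applying Egorov simultaneously to the $k$ sequences $(h^{m,i}_j)_j$ and restricting to compact subsets of the resulting measurable sets produces countably many compact subsets covering $K_i$ up to an $\mathcal{L}^n$-null set, on which the convergence is uniform for every $m$; the common modulus $\tilde\omega_i$ obtained this way certifies (\ref{eq:unimod}).

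The third and genuinely new step, which I expect to be the main obstacle, is to enforce (\ref{eq:oscbound}). The upper bound $|d_x^S f_m|^2\leq(c_m^i)^2$ is automatic, since the approximate local dilatation is always dominated by any local Lipschitz constant. For the lower bound, I use that after the previous steps $|d^S_{g_i(\cdot)}f_m|^2$ is continuous on $K_i$, so $K_i$ can be partitioned into compact subsets of Euclidean diameter at most $\delta$ on which each of the $k$ continuous functions $|d^S_{g_i(\cdot)}f_m|^2$ oscillates by at most $\epsilon/2$. Combining (\ref{eq:unimod}) with the comparison (\ref{eq:ComparisonToEuclidean}), for any two points $y,z$ in such a small piece,
\begin{equation*}
|f_m(g_i(y))-f_m(g_i(z))| \leq \bigl(|d^S_{g_i(z)}f_m|+\tilde\omega_i(\|g_i(y)-g_i(z)\|)\bigr)\,\|g_i(y)-g_i(z)\|,
\end{equation*}
so taking the supremum over such pairs gives $c_m^i\leq\sup_{g_i(K_i)}|d^Sf_m|+\tilde\omega_i(\delta)$. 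Choosing $\delta$ so small that the resulting square-error is below $\epsilon/2$, and combining with the oscillation bound on $|d^Sf_m|^2$, delivers $(c_m^i)^2-\epsilon\leq|d^S_xf_m|^2\leq(c_m^i)^2$ at every $x$ in the piece. Reindexing the resulting countable family of final compact pieces concludes the argument.
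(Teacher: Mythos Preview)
Your proposal is correct and follows essentially the same route as the paper: start from the decomposition of Lemma~\ref{le:dividemeasure}, refine via Lusin for continuity of $|d^S f_m|$, refine via Egorov for the uniform remainder estimate \eqref{eq:unimod}, and finally subdivide into pieces of small diameter to force \eqref{eq:oscbound}. One small slip: in your displayed bound for $|f_m(g_i(y))-f_m(g_i(z))|$ you should invoke \eqref{eq:UnifApprDist} rather than \eqref{eq:ComparisonToEuclidean}, which produces an extra factor $(1+\omega_i(\|g_i(y)-g_i(z)\|))$ on $|d^S_{g_i(z)}f_m|$; this is harmless for the argument since it is absorbed when you shrink $\delta$.
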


\begin{proof}
After obtaining the $K_i$ and $g_i$ as in Lemma \ref{le:dividemeasure}, we can apply Lusin's Theorem repeatedly to find compact sets $K_{ij}\subset K_i$ that for fixed $i$ cover the sets $K_i$ up to a set of measure zero, that is
\begin{equation}
\Ha^n\left( K_i \backslash \bigcup_{j=1}^\infty K_{ij} \right) = 0,
\end{equation}  
and such that $x \mapsto |d_x^S f_m|$ is continuous on $g_i(K_{ij})$.

Note that on the other hand, for all $z \in K_i$,
\begin{equation}
\label{eq:DerivUnif}
\lim_{K_i \ni y \to z} \left( \frac{(f_m\circ g_i)(y) - (f_m\circ g_i)(z)}{\|g_i(y)-g_i(z)\|} - \frac{d_z(f_m\circ g_i)(y-z)}{\|g_i(y)-g_i(z)\|} \right) = 0,
\end{equation}
so that by Egorov's Theorem, there are compact sets $\tilde{K}_{ij} \subset K_i$, $j=1, 2, \dots$, again covering $K_i$ up to measure zero, such that (\ref{eq:unimod}) holds for $\tilde{\omega}_i$ replaced by a modulus of continuity $\omega_{ij}$. After taking intersections and reindexing, we have shown the first part of the lemma.

Now let $\epsilon > 0$, let $y, z \in K_i$. From (\ref{eq:unimod}), we find
\begin{equation}
|(f_m\circ g_i)(y) - (f_m\circ g_i)(x)| \leq |d_z(f_m\circ g_i)(y-z)| + \tilde{\omega}_i( \|g_i(y)-g_i(z)\|)\|g_i(y)-g_i(z)\|.
\end{equation}
Since $d_z(f_m \circ g_i) = d_{g_i(z)}^S f_m \circ wd_zg_i$ and $\|wd_z g_i(y-z)\| = md_z g_i(y-z)$, we find also using (\ref{eq:UnifApprDist}),
\begin{equation}
\begin{split}
|(f_m\circ g_i)(y) - (f_m\circ g_i)(z)| & \leq |d_{g_i(z)}^S f_m| \|wd_z g_i (y-z)\|  \\
&\qquad  + \tilde{\omega}_i( \|g_i(y)-g_i(z)\|)  \|g_i(y)-g_i(z)\| \\
& \leq \Big( |d_{g(z)}^S f_m| (1 + \omega_i(\|g_i(y) - g_i(z)\|)) \\ 
&\qquad+ \tilde{\omega}_i ( \|g_i(y)-g_i(z)\|) \Big) \|g_i(y)-g_i(z)\|.
\end{split}
\end{equation}
Again replacing each set $K_i$ by a countable collection of compact subsets, with small enough diameters, the union of which covers $K_i$ up to a set of measure zero, we ensure (\ref{eq:oscbound}).
\end{proof}

\begin{theorem}
\label{th:SemContMinMaxFlat}
Suppose $Y$ is a $w^*$-separable dual space and $T_i \in I_n(Y)$ converge in the flat distance on $Y$ to a current $T \in I_n(Y)$ such that $\Mass(T_i) \to \Mass(T)$ as $i \to \infty$. 
Then
\begin{equation}
\limsup_{i \to \infty} \lambda_k( T_i ) \leq \lambda_k( T ).
\end{equation}
\end{theorem}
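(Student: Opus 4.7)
The argument combines two ingredients: first, Lemma \ref{le:weakcurweakmeas} converts the hypotheses into weak convergence $\|T_i\| \rightharpoonup \|T\|$ of mass measures on $Y$; second, Lemma \ref{le:dividecurrent} partitions $\set(T)$ into compacta on which the local Lipschitz constants of fixed test functions match the norms of their tangential derivatives up to $\eps$. The plan is to lift near-optimal test functions for $\lambda_k(T)$ to bounded Lipschitz functions on $Y$ whose energy against $\|T_i\|$ can be controlled piece by piece, and then restore $\mathcal{V}(\|T_i\|)$-orthonormality by Gram--Schmidt.

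Fix $\eps > 0$. By density of bounded Lipschitz functions in $W^{1,2}(\|T\|)$ (Theorem \ref{th:ApdilIsMinRelaxGrad}), choose $L^2(\|T\|)$-orthonormal bounded Lipschitz functions $\phi_1,\dots,\phi_k \in \mathcal{V}(\|T\|)$ on $\set(T)$ realizing $\max_m \mathcal{E}_T(\phi_m) \leq \lambda_k(T) + \eps$. Apply Lemma \ref{le:dividecurrent} to $T$ with $f_m = \phi_m$ to obtain pairwise disjoint compacta $g_\ell(K_\ell) \subset \set(T)$ exhausting $\|T\|$-almost all of $\set(T)$, on each of which $\phi_m$ has Lipschitz constant $c_m^\ell$ satisfying $(c_m^\ell)^2 - \eps \leq |d\phi_m|^2 \leq (c_m^\ell)^2$ pointwise. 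Pick $N$ with $\|T\|(\set(T)\setminus\bigcup_{\ell\leq N}g_\ell(K_\ell)) \leq \eps$, and by Radon regularity of $\|T\|$ and pairwise disjointness of the compacta, select nested open sets $g_\ell(K_\ell) \subset V_\ell \subset \overline{V_\ell} \subset U_\ell \subset Y$ with $U_1,\dots,U_N$ pairwise disjoint, $\|T\|(\partial V_\ell) = \|T\|(\partial U_\ell) = 0$, and $\|T\|(U_\ell\setminus V_\ell) \leq \eps/N$. For each $m,\ell$ let $\psi_m^\ell$ be a McShane extension of $\phi_m|_{g_\ell(K_\ell)}$ to $Y$ with $\Lip(\psi_m^\ell)=c_m^\ell$ and $\|\psi_m^\ell\|_\infty \leq \|\phi_m\|_\infty$, let $\chi_\ell : Y \to [0,1]$ be Lipschitz with $\chi_\ell \equiv 1$ on $V_\ell$ and $\supp\chi_\ell \subset U_\ell$, and set
\begin{equation*}
\tilde\phi_m := \sum_{\ell=1}^N \chi_\ell \psi_m^\ell, \qquad m = 1,\dots,k.
\end{equation*}
On $V_\ell$ this equals $\psi_m^\ell$, of Lipschitz constant $c_m^\ell$; on $U_\ell\setminus V_\ell$ its Lipschitz constant is at most $c_m^\ell + \Lip(\chi_\ell)\|\phi_m\|_\infty$; outside $\bigcup_\ell U_\ell$ it vanishes.

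Since the $\tilde\phi_m \tilde\phi_n$ and $\tilde\phi_m$ are bounded continuous on $Y$ and $\|T_i\| \rightharpoonup \|T\|$, the inner products $\int_Y \tilde\phi_m\tilde\phi_n\,d\|T_i\|$ and means $\int_Y \tilde\phi_m\,d\|T_i\|$ converge to their counterparts for $\|T\|$, which are $O(\eps)$-close to $\delta_{mn}$ and $0$ respectively (using that $\tilde\phi_m = \phi_m$ on $\bigcup_{\ell\leq N} g_\ell(K_\ell)$ and the bounds on the exceptional set and the collars). For the energy, pointwise $|d\tilde\phi_m|^2 \leq \Lip(\tilde\phi_m|_{V_\ell})^2$ on $V_\ell$ and $|d\tilde\phi_m|^2 \leq \Lip(\tilde\phi_m|_{U_\ell\setminus V_\ell})^2$ on $U_\ell\setminus V_\ell$, so by portmanteau applied to the continuity sets $V_\ell, U_\ell$,
\begin{equation*}
\limsup_{i\to\infty} \int_Y |d\tilde\phi_m|^2\,d\|T_i\| \leq \sum_{\ell=1}^N (c_m^\ell)^2 \|T\|(V_\ell) + \sum_{\ell=1}^N \bigl(c_m^\ell + \Lip(\chi_\ell)\|\phi_m\|_\infty\bigr)^2 \|T\|(U_\ell\setminus V_\ell),
\end{equation*}
which is $\leq E_T(\phi_m) + O(\eps)$ by Lemma \ref{le:dividecurrent} and the bound $\|T\|(U_\ell\setminus V_\ell)\leq\eps/N$. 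For $i$ sufficiently large, the Gram matrix of $\{\tilde\phi_m\}_{m=1}^k$ in $L^2(\|T_i\|)$ is $O(\eps)$-close to the identity, so subtracting $\|T_i\|$-means and applying Gram--Schmidt produces $L^2(\|T_i\|)$-orthonormal $\psi_1^i,\dots,\psi_k^i \in \mathcal{V}(\|T_i\|)$ with $\max_m \mathcal{E}_{T_i}(\psi_m^i) \leq \lambda_k(T) + O(\eps)$ by the standard perturbation bound for the quadratic form $E_{T_i}$ under an $O(\eps)$-perturbation of the identity on the test space. Taking $\limsup_i$ and then $\eps \to 0$ yields the claim. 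The main obstacle is Lipschitz control: a naive McShane extension of $\phi_m$ to all of $Y$ could force the global Lipschitz constant to be $\Lip(\phi_m) \gg \|\,|d\phi_m|\,\|_{L^\infty(\|T\|)}$, destroying the energy bound against $\|T_i\|$; it is precisely Lemma \ref{le:dividecurrent} that localizes the problem onto pieces where the Lipschitz constant and the pointwise norm of the tangential derivative agree up to $\eps$, while weak convergence of $\|T_i\|$ absorbs the collar contributions coming from the cutoffs $\chi_\ell$.
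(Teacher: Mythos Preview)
Your overall strategy matches the paper's: apply Lemma~\ref{le:dividecurrent} to near-optimal test functions, extend them to $Y$ with controlled local Lipschitz constants, invoke $\|T_i\|\rightharpoonup\|T\|$ via Lemma~\ref{le:weakcurweakmeas}, and restore orthonormality by Gram--Schmidt. The difference lies in how you build the extension, and that is where a gap appears.

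You define $\tilde\phi_m=\sum_\ell\chi_\ell\psi_m^\ell$ using Lipschitz cutoffs $\chi_\ell$ supported in $U_\ell$ and equal to $1$ on $V_\ell$, and then assert that the collar contribution
\[
\sum_{\ell=1}^N\bigl(c_m^\ell+\Lip(\chi_\ell)\|\phi_m\|_\infty\bigr)^2\,\|T\|(U_\ell\setminus V_\ell)
\]
is $O(\eps)$. This would require the $\Lip(\chi_\ell)$ to be bounded independently of $\eps$, but nothing in your construction guarantees that: forcing $\|T\|(U_\ell\setminus V_\ell)\leq\eps/N$ may well push $\overline{V_\ell}$ close to $\partial U_\ell$ and make $\Lip(\chi_\ell)$ blow up. Concretely, if one takes $U_\ell,V_\ell$ to be the $s$- and $s/2$-neighborhoods of $g_\ell(K_\ell)$, then $\Lip(\chi_\ell)\sim 1/s$, while the total collar $\|T\|$-mass is only bounded by the tail mass $\tau(N)$; the product $(1/s)^2\tau(N)$ need not tend to zero as $N\to\infty$ (and $s=s(N)\to 0$). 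So the $O(\eps)$ claim is unjustified, and the argument as written does not close.

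The paper sidesteps this entirely by avoiding cutoffs. It takes the $U_\ell$ to be $\delta/10$-neighborhoods of the compacta (hence pairwise disjoint), McShane-extends $\phi_m$ separately on each $U_\ell$ with Lipschitz constant exactly $c_m^\ell$, and then observes that the resulting function on $\bigcup_\ell U_\ell$ has \emph{global} Lipschitz constant at most $2\Lip(\phi_m)$ (a short triangle-inequality computation using that distinct $U_\ell$ are at distance $\geq 4\delta/5$). One more McShane extension then yields a bounded Lipschitz $\hat f_m$ on all of $Y$ with $\Lip(\hat f_m|_{U_\ell})\leq c_m^\ell$ and $\Lip(\hat f_m)\leq 2\Lip(\phi_m)$. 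Now the energy splits as
\[
\int_Y|d\hat f_m|^2\,d\|T_i\|\leq\sum_{\ell=1}^N(c_m^\ell)^2\|T_i\|(U_\ell)+4\Lip(\phi_m)^2\,\|T_i\|\Bigl(Y\setminus\bigcup_\ell U_\ell\Bigr),
\]
and the second term is controlled by a constant depending only on the fixed functions $\phi_m$ times the tail mass. Your cutoff construction trades a bounded error on the tail for an unbounded error on the collars; replacing it with the paper's global extension repairs the argument with no other changes.
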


\begin{proof}
Let  $\sigma > 0$ and choose bounded Lipschitz functions $f_1, \dots, f_k$ such that they are $L^2(\|T\|)$-orthonormal and for $j = 1, \dots, k$
\begin{equation}
\mathcal{E}_T(f_j) \leq \mathcal{E}_T(f_k) 
	\leq \lambda_k(T) + \sigma.
\end{equation}
Let $\epsilon > 0$ and apply Lemma \ref{le:dividecurrent} to the current $T$ and the functions $f_j$, to obtain functions $g_\ell$ and sets $K_\ell$ ($\ell = 1, 2, \dots$) as in the Lemma.

Select $N$ large enough such that
\begin{equation}
\label{eq:KlCovEps}
\|T\| \left( \set T \backslash \bigcup_{\ell=1}^N g_\ell(K_\ell) \right) < \epsilon.
\end{equation}
The sets $g_\ell(K_\ell)$ are compact and disjoint, so the minimal distance $\delta$ between the sets is positive,
\begin{equation}
\delta := \min_{\substack{ i,j = 1, \dots N \\ i < j} } \dist\left(g_i(K_i), g_j(K_j) \right) > 0.
\end{equation}
Define the open sets $U_j\subset Y$ as the $\delta/10$ neighborhoods of $K_j$. 

We claim that we can extend the functions $f_j$ to bounded Lipschitz functions $\hat{f}_j$ on the whole of $Y$, so that $\Lip(\hat{f}_j) \leq 2 \Lip( f_j )$, $\sup |\hat{f}_j| \leq 2 \sup |f_j| =: M_j$, and
\begin{equation}
\label{eq:BoundLipRestr}
\Lip\left(\hat{f}_j|_{U_j}\right) \leq \Lip\left(f_j|_{g_\ell(K_\ell)}\right) =: c_j^\ell.
\end{equation}
This can be done as follows. First, we define 
\begin{equation}
\bar{f}_j^\ell(x) := \inf_{a\in g_\ell(K_\ell) } f_j(a) + c_j^\ell \| a - x \|,\qquad x \in U_\ell,
\end{equation}
and if necessary, we truncate $\hat{f}_j^\ell := (\bar{f}_j^\ell \land M_j ) \lor (-M_j)$. 
Note that $\Lip(\hat{f}_j^\ell) \leq c_j^\ell$.
Subsequently, we consider the functions $\hat{f}_j: \cup_\ell U_\ell \to \R$, given by
\begin{equation}
\hat{f}_j(x) = \hat{f}_j^\ell(x), \qquad \text{ if } x \in U_\ell.
\end{equation}
Note that the Lipschitz constant of $\hat{f}_j$ is less than $2 \Lip(f_j)$. Indeed, if $x\in U_{\ell_1}$, $y \in U_{\ell_2}$, $\ell_1 \neq \ell_2$, there exist $x_0 \in g_{\ell_1}(K_{\ell_1})$ and $y_0 \in g_{\ell_2}(K_{\ell_2})$ such that $|x-x_0| < \delta/5$ and $|y-y_0| < \delta/5$ and therefore
\begin{equation}
\begin{split}
|\hat{f}_j(x) - \hat{f}_j(y)| &\leq |\hat{f}_j(x) - \hat{f}_j(x_0)| + |\hat{f}_j(x_0) - \hat{f}_j(y_0)| + |\hat{f}_j(y_0) - \hat{f}_j(y)| \\
&\leq c_j^{\ell_1} |x-x_0| + \Lip(f_j) |x_0 - y_0| + c_j^{\ell_2} |y_0 - y| \\
&\leq \Lip(f_j)\left( \frac{1}{4} |x-y| + \frac{5}{4}  |x - y| + \frac{1}{4} |x-y| \right) \\
&< 2 \Lip(f_j) |x-y|.
\end{split}
\end{equation}
Consequently, we can extend the functions $\hat{f}_j$ to Lipschitz functions on the whole of $Y$ as claimed.

As explained in Lemma \ref{le:weakcurweakmeas}, by the portmanteau theorem (cf. \cite{klenke_probability_2007}), $\|T_i\| \rightharpoonup \|T\|$ weakly as measures on $Y$. 
As the $\hat{f}_j$ are bounded and Lipschitz, this implies that
\begin{subequations}
\label{eq:asymptorth}
\begin{alignat}{2}
\lim_{i \to \infty} \int_Y \hat{f}_{j_1} \hat{f}_{j_2} d \|T_i\| 
	 &= \lim_{i \to \infty} \int_Y \hat{f}_{j_1} \hat{f}_{j_2} d\| T \| 
	 &&= \delta_{j_1,j_2},\\
\lim_{i \to \infty} \int_Y \hat{f}_{j_1} d\|T_i\|
	&=  \int_Y \hat{f}_{j_1} d\|T\| 
	&&= 0,
\end{alignat}
\end{subequations}
for all $j_1, j_2 \in \{ 1, \dots, k\}$.

We can restrict the functions $\hat{f}_j$ to $\set(T_i)$, subtract the average, 
\begin{equation}
f_j^i := \hat{f}_j - \frac{1}{\Mass(T_i)}\int_Y \hat{f}_j d\|T_i\|,
\end{equation}
and then apply Gram-Schmidt to obtain $\psi^i_1, \dots, \psi^i_k$, an $L^2(\|T_i\|)$-orthonormal system of bounded Lipschitz functions.
That is,
\begin{align}
\psi_1^i &:= \frac{f_1^i}{\| f_1^i \|_{L^2(\|T_i\|)}},\\
\nonumber \hat{\psi}_{j+1}^i &:= f_{j+1}^i - \left(f_{j+1}^i,  \psi_1^i \right)_{L^2(\|T_i\|)} \psi_1^i \\
&\qquad					- \left(f_{j+1}^i,  \psi_2^i \right)_{L^2(\|T_i\|)} \psi_2^i 
					- \dots 
					- \left(f_{j+1}^i,  \psi_j^i \right)_{L^2(\|T_i\|)} \psi_j^i,\\
\psi_{j+1}^i &:= \frac{\hat{\psi}_{j+1}^i}{\| \hat{\psi}_{j+1}^i \|_{L^2(\|T_i\|)}}, \qquad j = 1, 2, \dots, k-1.
\end{align}
Note that in particular, $\int_Y \psi_j^i \, d\|T_i\| = 0$, so that $\psi_j^i \in \mathcal{V}(\|T_i\|)$. 
As a result of the Gram-Schmidt algorithm, the functions $\psi_{j_1}^i$ satisfy, for $j_1 = 1, \dots, k$,
\begin{equation}
\psi_{j_1}^i = \sum_{j_2 = 1}^k a^i_{j_1 j_2} \hat{f}_{j_2} + b^i_{j_1},
\end{equation} 
where $a^i_{j_1 j_2}$ and $b^i_{j_1}$ are constants, $j_1,j_2 \in \{1,\dots,k\}$, which by (\ref{eq:asymptorth}) satisfy
\begin{subequations}
\label{eq:CoeffToDelta}
\begin{align}
\lim_{i \to \infty} a^i_{j_1 j_2} &= \delta_{j_1 j_2}, \\
\lim_{i \to \infty} b^i_{j_1} &= 0.
\end{align}
\end{subequations}
Observe that this implies that
\begin{equation}
\label{eq:Switchpsif}
\lim_{i \to \infty} \int_Y \left| |d\psi_j^i|^2 - |d \hat{f}_j|^2 \right| d\|T_i\| = 0.
\end{equation}
By (\ref{eq:Switchpsif}), (\ref{eq:KlCovEps}) and the fact that $\Lip(\hat{f}_j) < 2 \Lip(f_j)$,
\begin{equation}
\label{eq:EstSecTermPsi}
\begin{split}
\limsup_{i \to \infty}  \int_Y |d\psi_j^i|^2 d\|T_i\| 
& \leq \limsup_{i \to \infty} \sum_{\ell=1}^N \int_{U_\ell} |d \psi_j^i|^2 d\|T_i\| 
	+ \limsup_{i \to \infty} \int_{(\cup_\ell U_\ell)^c} |d \psi_j^i|^2 d\|T_i\|
\\ 
& \leq \limsup_{i \to \infty} \sum_{\ell=1}^N \int_{U_\ell} |d \psi^i_j|^2 d\|T_i\| +
 2 \epsilon \sup_{j} \Lip(f_j)^2.
\end{split}
\end{equation}
For the first term, we have by (\ref{eq:Switchpsif}), (\ref{eq:BoundLipRestr}), and the bound (\ref{eq:oscbound}) from the application of Lemma \ref{le:dividecurrent},
\begin{equation}
\label{eq:EstFirstTermPsi}
\begin{split}
\limsup_{i \to \infty} \sum_{\ell=1}^N \int_{U_\ell} |d \psi_j^i|^2 d\|T_i\| 
	& \leq \limsup_{i \to \infty} \sum_{\ell=1}^N \int_{U_\ell} |d \hat{f}_j |^2 d\|T_i\| \\
	& \leq \limsup_{i \to \infty} \sum_{\ell=1}^N  (c_j^\ell)^2 \| T_i \|(U_\ell) \\
	& \leq \sum_{\ell = 1}^N \int_{U_\ell} |d f_j|^2 d\|T\| + \epsilon \mathbf{M}(T) \\
	& \leq \lambda_k(T) + \sigma + \epsilon \mathbf{M}(T).
\end{split}
\end{equation}
Since $\{\psi_j^i\}_{j=1}^k \subset \mathcal{V}(\|T_i\|)$ are $L^2(\|T_i\|)$-orthonormal, we conclude from (\ref{eq:EstSecTermPsi}) and (\ref{eq:EstFirstTermPsi}) that
\begin{equation}
\limsup_{i \to \infty} \lambda_k(T_i) \leq \lambda_k(T) + \sigma + \epsilon \mathbf{M}(T) + 2 \epsilon \sup_j \Lip(f_j).
\end{equation}
Because $\sigma$ and $\epsilon$ were arbitrary, and the $f_j$ do not depend on $\epsilon$, this implies the theorem.
\end{proof}

\section{Semicontinuity for min-max values under intrinsic flat convergence}
\label{se:SemInfEn}

In this section we define the infimum of the normalized energy $\lambda_1$ and the other min-max values $\lambda_k$ for integral current spaces, and show that they are semicontinuous under intrinsic flat convergence if the mass converges as well.

\subsection{Min-max values for integral current spaces}

We first define the infimum of the normalized energy $\lambda_1$ and the min-max values $\lambda_k$ following the definitions in  (\ref{eq:InfEn}) and (\ref{eq:minmax}).

\begin{definition}
Given a nonzero integral current space $M=(X,d,T)$ we define $\lambda_1(M)$ as the infimum of the normalized energy
\begin{equation}
\lambda_1(M) = \inf_{ f \in \mathcal{V}(\|T\|) } \mathcal{E}_T(f),
\end{equation}
and the min-max values $\lambda_k(M)$
\begin{equation}
\lambda_k(M) := \inf_{\substack{\{\phi_1 , \dots , \phi_{k}\} \subset \mathcal{V}(\|T\|) \\ L^2(\| T\| )-\text{orthonormal} }} \sup_{i=1,\dots,k} \mathcal{E}_T(\phi_i),
\end{equation}
with $\mathcal{E}_T$ as in Definition \ref{de:NormEn}.
\end{definition}

Observe that when $M$ is induced by an oriented Riemannian manifold, $\lambda_k(M)$ is its $k$th Neumann eigenvalue.

The intrinsic flat distance between two integral current spaces is zero if and only if there exists a current preserving isometry between the two spaces. The following Lemma states that in that case, the min-max values of the two spaces coincide.

\begin{lemma}
Let $M_1 = (X_1, d_1, T_1)$ and $M_2 = (X_2, d_2, T_2)$ be integral current spaces and let $\phi: X_1 \to X_2$ be a current-preserving isometry. That is, besides being an isometry from $X_1$ to $X_2$, $\phi$ also satisfies $\phi_\# T_1 = T_2$. Then for all $k=1, 2, \dots$, it holds that $\lambda_k(M_1) = \lambda_k(M_2)$.
\end{lemma}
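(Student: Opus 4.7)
The plan is to transfer test functions via pullback by $\phi$ and show that all the ingredients in the definition of $\lambda_k$ are preserved, reducing the equality $\lambda_k(M_1)=\lambda_k(M_2)$ to the observation that the two variational problems are identical after relabeling.

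First, I would verify that $\phi$ induces a mass-preserving correspondence. Since $\phi:X_1 \to X_2$ is an isometric bijection (of the canonical subsets of the completions) with $\phi_\# T_1 = T_2$, the definition of the mass measure together with the isometry property of $\phi$ gives $\phi_\# \|T_1\| = \|T_2\|$, essentially as in Proposition \ref{pr:behaviormaps}. I would then observe that for any bounded Lipschitz $f$ on $\supp T_2$, the pullback $\phi^\# f := f \circ \phi$ is bounded Lipschitz on $\supp T_1$ with $\Lip(\phi^\# f) = \Lip(f)$, and by the same proposition (applied locally, via the embedding viewpoint, or by direct inspection of approximate local dilatation, which is an intrinsic metric quantity and behaves naturally under isometries),
\begin{equation}
\int_{X_1} |\phi^\# f|^2 d\|T_1\| = \int_{X_2} |f|^2 d\|T_2\|, \qquad E_{T_1}(\phi^\# f) = E_{T_2}(f), \qquad \int_{X_1} \phi^\# f \, d\|T_1\| = \int_{X_2} f\, d\|T_2\|.
\end{equation}

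Next, I would extend the pullback to an isometric isomorphism $\phi^\#:W^{1,2}(\|T_2\|)\to W^{1,2}(\|T_1\|)$. Since $W^{1,2}(\|T_i\|)$ is by definition the completion of bounded Lipschitz functions under a norm built only from $\|\cdot\|_{L^2(\|T_i\|)}$ and $E_{T_i}$, the identities above imply that $\phi^\#$ is a norm-preserving bijection on the dense subspaces, and hence extends to an isometric isomorphism of the completions. It also maps $\mathcal{V}(\|T_2\|)$ bijectively onto $\mathcal{V}(\|T_1\|)$ and preserves $L^2$-inner products, so it sends $L^2(\|T_2\|)$-orthonormal $k$-tuples to $L^2(\|T_1\|)$-orthonormal $k$-tuples, and $\mathcal{E}_{T_1}(\phi^\# f)=\mathcal{E}_{T_2}(f)$ whenever $f$ is not $\|T_2\|$-a.e. zero.

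Finally, I would conclude by unwinding the min-max definition: for every admissible orthonormal tuple $\{\phi_1,\dots,\phi_k\}\subset \mathcal{V}(\|T_2\|)$, the tuple $\{\phi^\#\phi_1,\dots,\phi^\#\phi_k\}\subset \mathcal{V}(\|T_1\|)$ is admissible and realizes the same value $\sup_i \mathcal{E}_{T_i}(\cdot)$, giving $\lambda_k(M_1)\le \lambda_k(M_2)$; the reverse inequality follows by the same argument with $\phi^{-1}$, which is again a current-preserving isometry. I do not expect any serious obstacle: the only point requiring care is the initial identity $\phi_\# \|T_1\| = \|T_2\|$ and the intrinsic nature of $E_T$ (so that the pullback preserves the energy), both of which are essentially bookkeeping on top of Proposition \ref{pr:behaviormaps} and the definition of the approximate local dilatation as a purely metric quantity.
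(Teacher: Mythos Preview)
Your proposal is correct and follows essentially the same approach as the paper: the paper's proof consists of the single sentence ``This follows immediately from Proposition \ref{pr:behaviormaps},'' and what you have written is precisely the unpacking of that sentence, showing that the isometry transports the mass measure, the energy, the $L^2$ inner product and the zero-average constraint, and hence identifies the two min-max problems.
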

\begin{proof}
This follows immediately from Proposition \ref{pr:behaviormaps}.
\end{proof}

\subsection{Semicontinuity for min-max values under intrinsic flat convergence}

Theorem \ref{th:SemContMinMaxFlat} immediately implies semicontinuity of the min-max values $\lambda_k$ under intrinsic flat convergence when the total mass is conserved as well.

\begin{theorem}[Upper-semicontinuity of min-max values]
\label{Th:SemContFirst}
Let $(X_i,d_i,T_i)$, (with $i=1,2,\dots$), be a sequence of integral current spaces converging in the intrinsic flat distance to a nonzero integral current space $(X,d,T)$ such that additionally, $\mathbf{M}(T_i) \to \mathbf{M}(T)$ as $i \to \infty$. 
Then one has semicontinuity of the min-max values
\begin{equation}
\limsup_{i \to \infty} \lambda_k(T_i) \leq \lambda_k(T).
\end{equation}

\end{theorem}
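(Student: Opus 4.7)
The plan is to reduce the intrinsic flat statement to the already-proven flat statement in a common ambient space, using Theorem \ref{th:embcommonspace} and the invariance of the min-max values under current-preserving isometries.

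First I would apply Theorem \ref{th:embcommonspace} of Sormani--Wenger to produce a complete separable metric space $Z$ together with isometric embeddings $\phi_i : \overline{X_i} \to Z$ and $\phi : \overline{X} \to Z$ such that $(\phi_i)_\# T_i \to \phi_\# T$ in the flat distance on $Z$. Since $Z$ is separable, it embeds isometrically (via a Kuratowski embedding) into the $w^*$-separable dual Banach space $\ell^\infty$; composing with this embedding I may assume without loss of generality that all the pushforwards live in a common $w^*$-separable dual space $Y$ and that flat convergence takes place there.

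Next I would verify the mass hypothesis in the ambient space. Because pushforward under an isometric embedding preserves the mass measure (i.e. $\|\phi_\# S\| = \phi_\# \|S\|$ and therefore $\mathbf{M}(\phi_\# S) = \mathbf{M}(S)$), the assumption $\mathbf{M}(T_i) \to \mathbf{M}(T)$ translates directly into $\mathbf{M}((\phi_i)_\# T_i) \to \mathbf{M}(\phi_\# T)$. At this point all hypotheses of Theorem \ref{th:SemContMinMaxFlat} are met, so that theorem yields
\begin{equation}
\limsup_{i \to \infty} \lambda_k\bigl((\phi_i)_\# T_i\bigr) \leq \lambda_k\bigl(\phi_\# T\bigr).
\end{equation}

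Finally I would identify the min-max values on the ambient space with those of the original integral current spaces. The preceding lemma (invariance of $\lambda_k$ under current-preserving isometries), which itself follows from Proposition \ref{pr:behaviormaps}, applies to the current-preserving isometries $\phi_i : X_i \to \phi_i(X_i)$ and $\phi : X \to \phi(X)$, giving $\lambda_k(T_i) = \lambda_k((\phi_i)_\# T_i)$ and $\lambda_k(T) = \lambda_k(\phi_\# T)$. Substituting these identities into the flat-convergence estimate yields the desired conclusion. There is no serious obstacle here: the whole argument is a two-line packaging of Theorem \ref{th:embcommonspace}, Theorem \ref{th:SemContMinMaxFlat}, and the isometric invariance of the energy functional; the only mild subtlety is checking that mass is preserved by the isometric embeddings, which is immediate from the definition of the pushforward.
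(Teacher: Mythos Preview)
Your proposal is correct and follows essentially the same route as the paper: invoke Theorem \ref{th:embcommonspace} together with a Kuratowski embedding to reduce to a common $w^*$-separable dual space, then apply Theorem \ref{th:SemContMinMaxFlat}. The paper's proof is more terse and leaves implicit the mass-preservation and isometric-invariance checks you spell out, but the argument is the same.
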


\begin{proof}
Since the integral current spaces $(X_i,d_i,T_i)$ converge in the intrinsic flat sense to $(X,d,T)$, by Theorem \ref{th:embcommonspace} established by Sormani and Wenger, and the Kuratowski embedding, there exist a $w^*$-separable dual space $Y$ and isometric embeddings $\phi_i: \overline{X_i} \to  Y$, $\phi: \overline{X} \to Y$ such that $(\phi_i)_\# T_i \to (\phi)_\# T$ in the flat distance in $Y$. Then we may apply Theorem \ref{th:SemContMinMaxFlat}.
\end{proof}

 \section{Infinitesimally Hilbertian integral currents}
 \label{se:InfHilb}
 
 In this section we additionally assume that the currents involved are infinitesimally Hilbertian, that is, that (almost everywhere) the norm on the tangent spaces to their rectifiable set is induced by an inner product. 
 This assumption is similar to the one made by Cheeger and Colding \cite{cheeger_structure_2000}, and ensures that there is a quadratic form associated to the energy.
 
 More precisely, for a metric space $X$ and an integral current $T \in I_n(X)$, we assume that for $\|T\|$-a.e. $x \in X$, $\Tan(\set{T},x)$ is an inner product space. Denote the inner product on the dual space to $\Tan(\set{T},x)$ by $g_x(.,.)$.
 
\begin{theorem}
\label{th:DefLapl}
Let $X$ be a complete metric space and let $T \in I_n(X)$. Then the quadratic form 
\[
Q_T(f,g) := \int_X g_x( d_x^S f, d_x^S g ) \, d\| T \|,
\]
with form-domain $W^{1,2}(\|T\|) \subset L^2(\|T\|)$ is closed. Consequently, there is a unique associated (unbounded) nonnegative self-adjoint operator $-\Delta_T$ on $L^2(\|T\|)$ that satisfies $Q_T(\phi,\phi) = (\phi,- \Delta_T \phi)$ for every $\phi \in \mathcal{D}(\Delta_T)$.
\end{theorem}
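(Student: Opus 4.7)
The plan is to verify closedness by showing that the form-norm coincides with the already-constructed $W^{1,2}$-norm from Section \ref{se:DefSobolev}, so that the form-domain is automatically complete, and then to invoke the standard representation theorem for closed nonnegative symmetric forms. Since the current is infinitesimally Hilbertian, $g_x$ is a genuine inner product on the dual of $\Tan(\set(T),x)$ for $\|T\|$-a.e. $x$, hence $g_x(d_x^S f, d_x^S f) = |d_x^S f|^2$ and $Q_T(f,f) = E_T(f)$. In particular the form-norm $\|f\|_Q^2 := \|f\|_{L^2(\|T\|)}^2 + Q_T(f,f)$ is exactly $\|f\|_{W^{1,2}}^2$. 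That $Q_T$ is finite, bilinear and symmetric on $W^{1,2}(\|T\|)$ follows from Cauchy--Schwarz applied to $g_x$, together with the definition of $df$ for Sobolev functions made available through Theorem \ref{th:TangDiffSob}.

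Next I would verify closedness directly. Recall that $W^{1,2}(\|T\|)$ was constructed in Section \ref{se:DefSobolev} as the completion of bounded Lipschitz functions under $\|.\|_{W^{1,2}}$, so it is by construction a Banach space; by Corollary \ref{co:InjectivityNaturalMap} the natural map $\iota: W^{1,2}(\|T\|) \to L^2(\|T\|)$ is injective, so we identify $W^{1,2}(\|T\|)$ with a linear subspace of $L^2(\|T\|)$. Given any sequence $f_n \in W^{1,2}(\|T\|)$ with $f_n \to f$ in $L^2(\|T\|)$ and $\|f_n - f_m\|_{W^{1,2}} \to 0$ as $n, m \to \infty$, completeness produces $\tilde{f} \in W^{1,2}(\|T\|)$ with $\|f_n - \tilde{f}\|_{W^{1,2}} \to 0$. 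Then $\iota(f_n) \to \iota(\tilde{f})$ in $L^2(\|T\|)$, forcing $\iota(\tilde{f}) = f$; injectivity of $\iota$ identifies $f$ with $\tilde{f}$, so $f \in W^{1,2}(\|T\|)$ and $\|f_n - f\|_Q \to 0$, which is exactly closedness.

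For the representation theorem we also need $W^{1,2}(\|T\|)$ to be dense in $L^2(\|T\|)$. This is routine: since $\|T\|$ is a finite Borel measure on the complete metric space $X$, continuous functions with bounded support are dense in $L^2(\|T\|)$, and each such function can be approximated in $L^2$ by bounded Lipschitz functions via a standard truncation-and-mollification argument, all of which lie in $W^{1,2}(\|T\|)$. With $Q_T$ densely defined, symmetric, nonnegative and closed, the classical representation theorem for such forms (see for instance Reed--Simon or Kato) yields a unique nonnegative self-adjoint operator $-\Delta_T$ on $L^2(\|T\|)$ satisfying $Q_T(\phi,\phi) = (\phi, -\Delta_T \phi)_{L^2(\|T\|)}$ for every $\phi \in \mathcal{D}(\Delta_T)$, and whose form-domain is $W^{1,2}(\|T\|)$.

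I do not anticipate any real obstacle here: the substantive work---the tangential differentiability of Sobolev functions (Theorem \ref{th:TangDiffSob}), the injectivity of $\iota$ (Corollary \ref{co:InjectivityNaturalMap}), and the lower semicontinuity of $E_T$ (Theorem \ref{th:LowSCEnergy})---has already been done. The only point that deserves care is the identification of the form-norm with $\|.\|_{W^{1,2}}$, which rests on infinitesimal Hilbertianity; once this is in place, closedness is essentially a bookkeeping consequence of the fact that $W^{1,2}(\|T\|)$ was defined as a completion, and the existence of $-\Delta_T$ is immediate from the standard form representation theorem.
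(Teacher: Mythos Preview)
Your proposal is correct and follows essentially the same approach as the paper: both identify the form-norm with the $W^{1,2}$-norm, use the injectivity of $\iota$ (Corollary \ref{co:InjectivityNaturalMap}) together with the completeness of $W^{1,2}(\|T\|)$ to conclude closedness, and then invoke the standard Reed--Simon representation theorem. Your version is actually slightly more thorough in that you explicitly verify density of $W^{1,2}(\|T\|)$ in $L^2(\|T\|)$, a hypothesis of the representation theorem that the paper leaves implicit.
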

 
\begin{proof}
We may interpret $W^{1,2}(\|T\|)$ as a subset of $L^2(\|T\|)$ by the injectivity of the natural map $\iota:W^{1,2}(\|T\|) \to L^2(\|T\|)$, shown in Corollary \ref{co:InjectivityNaturalMap}. The closedness of $Q_T$ is then immediate as $W^{1,2}(\|T\|)$ is complete.
It follows (see for instance \cite[Theorem VIII.15]{reed_methods_1972}) that $Q_T$ is the quadratic form of a unique self-adjoint operator on $L^2(\|T\|)$.
\end{proof}

 \begin{definition}
 By the min-max values $\mu_k(A)$ of an unbounded nonnegative self-adjoint operator $A$ on a Hilbert space $H$ with domain $\mathcal{D}(A)$, we mean
 \begin{equation}
 \mu_k(A) := \inf_{\substack{\{\phi_1, \dots, \phi_k \}\subset H\\ H-\text{orthonormal} } } (\phi , A \phi )_H. 
 \end{equation}
 \end{definition}
 
From standard functional analysis (cf.~e.g.~\cite[XIII.1]{reed_methods_1978}), it follows that there are two options. It could be that $\mu_k \to \infty$ as $k \to \infty$. 
In that case, the spectrum of $\Delta_T$ completely consists of eigenvalues $\mu_k$. 
Alternatively, there is a $\mu \geq 0$, the bottom of the essential spectrum of $A$, that is
\begin{equation}
\mu := \inf\{ \lambda \, | \, \lambda \in  \sigma_\text{ess}(A) \},
\end{equation}
and if $\mu_k<\mu$, it is the $k$th eigenvalue counting degenerate eigenvalues a number of times equal to their multiplicity.
If $\mu_k = \mu$, then also $\mu_j = \mu$ for all $j > k$. 
We note that this second case can occur in our setting, and that $-\Delta_T$ does not necessarily have a compact inverse.
 
 For $f \in W^{1,2}(\|T\|)$, it holds that $\mathcal{E}_T(f) = Q_T(f,f)$, therefore the operator $\Delta_T$ defined in Theorem \ref{th:DefLapl} satisfies
 \begin{equation}
 \begin{split}
 \mu_k(-\Delta_T) &= \inf_{\substack{\{\phi_1 , \dots , \phi_{k}\} \subset \mathcal{V} \\ L^2(\|T\|) - \text{orthonormal} }} \sup_{i=1,\dots,k}
 	 Q_T(\phi_i,\phi_i)\\
 &= \inf_{\substack{\{\phi_1 , \dots , \phi_{k}\} \subset \mathcal{V} \\ L^2(\|T\|) - \text{orthonormal} }} \sup_{i=1,\dots,k}
 	 \mathcal{E}_T(\phi_i)\\
 &= \lambda_k(T).
 \end{split}
 \end{equation}
 
 Hence, when we combine Theorem \ref{th:DefLapl} and Theorem \ref{Th:SemContFirst}, we obtain semicontinuity of min-max values of the self-adjoint operators under intrinsic flat convergence without loss of volume.
 
 \begin{theorem}
 \label{th:SemContHilb}
 Let $M_i = (X_i,d_i,T_i)$, be a sequence of integral current spaces converging in the intrinsic flat distance to a nonzero integral current space $M_\infty = (X_\infty,d_\infty,T_\infty)$ such that also $\mathbf{M}(T_i) \to \mathbf{M}(T_\infty)$ as $i \to \infty$. 
 Moreover, assume that the norm on $\|T_i\|$-a.e. approximate tangent space to $\set(T_i)$ is Hilbert, $i=1,2,\dots, \infty$. 
 Then, for every $i$, we may define an unbounded operator $\Delta_{T_i}$ on $L^2(\|T_i\|)$ as in Theorem \ref{th:DefLapl}, and its min-max values satisfy 
 \begin{equation}
 \limsup_{i \to \infty} \mu_k(-\Delta_{T_i}) \leq \mu_k(-\Delta_{T_\infty}).
 \end{equation}
 \end{theorem}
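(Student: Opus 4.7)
The plan is to reduce the statement to a direct combination of Theorem \ref{th:DefLapl} and Theorem \ref{Th:SemContFirst}, since all the real work has been done there. The key observation, already spelled out in the displayed computation immediately preceding the statement, is that the min-max values of the unbounded self-adjoint operator $-\Delta_{T_i}$ coincide with the min-max values $\lambda_k(T_i)$ of the associated Dirichlet-type energy $\mathcal{E}_{T_i}$.

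First, I would invoke Theorem \ref{th:DefLapl}, which requires the infinitesimally Hilbertian hypothesis, to guarantee that for each $i \in \{1,2,\dots,\infty\}$ the quadratic form $Q_{T_i}$ is closed on the form-domain $W^{1,2}(\|T_i\|) \subset L^2(\|T_i\|)$ (using the injectivity of $\iota$ from Corollary \ref{co:InjectivityNaturalMap}), and hence defines a unique nonnegative self-adjoint operator $-\Delta_{T_i}$. By the variational (min-max) characterization of the operator's min-max values in terms of its quadratic form, for every admissible test family $\{\phi_1,\dots,\phi_k\} \subset \mathcal{V}(\|T_i\|)$ that is $L^2(\|T_i\|)$-orthonormal we have $(\phi_j, -\Delta_{T_i}\phi_j) = Q_{T_i}(\phi_j,\phi_j) = \mathcal{E}_{T_i}(\phi_j)$, and therefore
\begin{equation}
\mu_k(-\Delta_{T_i}) \;=\; \lambda_k(T_i),
\end{equation}
exactly as displayed in the paragraph above the statement.

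Next, I would apply Theorem \ref{Th:SemContFirst} to the sequence $(X_i,d_i,T_i) \to (X_\infty,d_\infty,T_\infty)$, whose hypotheses (intrinsic flat convergence together with $\mathbf{M}(T_i)\to\mathbf{M}(T_\infty)$) are exactly what is assumed here. This yields the upper semicontinuity
\begin{equation}
\limsup_{i\to\infty} \lambda_k(T_i) \;\leq\; \lambda_k(T_\infty).
\end{equation}
Substituting the identification from the previous step on both sides immediately produces the claimed inequality.

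There is no genuine obstacle: the content is entirely encoded in the two cited theorems, and the infinitesimal Hilbertianity enters only to make the Laplacian well defined via the form approach. The only small matter of care is to note that the identification $\mu_k(-\Delta_{T_i})=\lambda_k(T_i)$ uses the full form-domain $W^{1,2}(\|T_i\|)$, while $\lambda_k(T_i)$ in (\ref{eq:minmax}) is initially defined by infimum over bounded Lipschitz functions; but since bounded Lipschitz functions are dense in $W^{1,2}(\|T_i\|)$ by definition and the energy is lower semicontinuous (Theorem \ref{th:LowSCEnergy}), the two infima agree, so no adjustment is needed.
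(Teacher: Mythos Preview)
Your proposal is correct and follows exactly the paper's approach: the paper does not give a separate proof of this theorem but simply states that it follows by combining Theorem \ref{th:DefLapl} with Theorem \ref{Th:SemContFirst}, using the identification $\mu_k(-\Delta_{T_i}) = \lambda_k(T_i)$ displayed just before the statement. Your additional remark about density of bounded Lipschitz functions is harmless but unnecessary, since $\lambda_k(T)$ in (\ref{eq:minmax}) is already defined with test functions ranging over $\mathcal{V}(\|T\|) \subset W^{1,2}(\|T\|)$.
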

 
 Theorem \ref{th:main} in the introduction is a translation of Theorem \ref{th:SemContHilb} to the simpler setting of closed oriented Riemannian manifolds.
 
 When we combine Theorem \ref{th:DefLapl} together with Theorem \ref{th:SemContMinMaxFlat}, we obtain a version of a Theorem proven by the author in \cite{portegies_semicontinuity_2012} with simpler methods.
 
 \begin{corollary}
 Suppose $T_i$ ($i=1,2,\dots$) are integral currents on Euclidean space, converging in the flat sense to a limit current $T_\infty$ such that additionally $\mathbf{M}(T_i) \to \mathbf{M}(T_\infty)$. 
 Then the min-max values of the unbounded operators $\Delta_{T_i}$, $i=1,2, \dots, \infty$, defined as in Theorem \ref{th:DefLapl}, satisfy
 \begin{equation}
 \limsup_{i \to \infty} \mu_k(-\Delta_{T_i}) \leq \mu_k(-\Delta_{T_\infty}).
 \end{equation}
 \end{corollary}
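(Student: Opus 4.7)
The plan is to chain together two results already proved in the paper: the semicontinuity of the min-max values $\lambda_k$ under flat convergence without loss of mass (Theorem \ref{th:SemContMinMaxFlat}), and the identification $\mu_k(-\Delta_T) = \lambda_k(T)$ that was derived immediately before Theorem \ref{th:SemContHilb}. The Euclidean setting simplifies matters considerably because no passage to a common ambient metric space through Theorem \ref{th:embcommonspace} is needed; the currents $T_i$ and $T_\infty$ already live in the same space $\R^N$, which is itself a $w^*$-separable dual space.

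First I would verify that the hypotheses of Theorem \ref{th:DefLapl} are met, so that the operators $\Delta_{T_i}$ and $\Delta_{T_\infty}$ are actually defined. This is immediate: for any countably $\Ha^n$-rectifiable subset of $\R^N$, the approximate tangent space at $\|T\|$-a.e.\ point is a linear subspace of $\R^N$ and inherits the Euclidean inner product, so every current in question is infinitesimally Hilbertian in the required sense, and the associated self-adjoint Laplacians $\Delta_{T_i}$ and $\Delta_{T_\infty}$ exist on $L^2(\|T_i\|)$ and $L^2(\|T_\infty\|)$ respectively.

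Next, I would apply Theorem \ref{th:SemContMinMaxFlat} directly in $Y = \R^N$, with no intermediate embedding, to conclude
\begin{equation}
\limsup_{i \to \infty} \lambda_k(T_i) \leq \lambda_k(T_\infty).
\end{equation}
Finally, by the computation just before Theorem \ref{th:SemContHilb}, which uses that the quadratic form $Q_T$ restricted to $W^{1,2}(\|T\|)$ agrees with the energy $E_T$ and that the form-domain embeds injectively into $L^2(\|T\|)$ by Corollary \ref{co:InjectivityNaturalMap}, we have the identity $\mu_k(-\Delta_T) = \lambda_k(T)$ for each of the currents involved. Substituting this into the displayed inequality yields the claim.

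There is no real obstacle: this corollary is essentially a packaging of earlier results in the special case where the ambient space is Euclidean. The only thing worth emphasising is that the technical step of producing a common isometric embedding via Theorem \ref{th:embcommonspace} (needed in the proof of Theorem \ref{Th:SemContFirst}) is superfluous here, which is precisely why this version was singled out and attributed to the simpler argument of \cite{portegies_semicontinuity_2012}.
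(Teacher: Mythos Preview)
Your proposal is correct and follows essentially the same approach as the paper: observe that in Euclidean space the approximate tangent spaces automatically carry a Hilbert structure so Theorem \ref{th:DefLapl} applies, then invoke Theorem \ref{th:SemContMinMaxFlat} directly (no common embedding needed) and use the identification $\mu_k(-\Delta_T)=\lambda_k(T)$. The paper's proof is the same, only more terse.
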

 
 \begin{proof}
 As the currents $T_i$ are defined on Euclidean space, the approximate tangent spaces automatically have a Hilbert structure. The rest follows from Theorems \ref{th:DefLapl} and \ref{th:SemContMinMaxFlat}.
 \end{proof}
 
\section{Examples}
\label{se:example}

\subsection{Example: a disappearing spline}
We first consider the example of a disappearing spline. This shows that when a sequence of manifolds converges in the intrinsic flat sense without loss of volume, that is when it satisfies the conditions of Theorem \ref{th:main}, we may not expect \emph{continuity} of the eigenvalues. 
Consider for $\epsilon > 0$ the smooth functions $h_\epsilon:[-2,2] \to \R$, $h_\epsilon\geq 0$, such that
\begin{equation}
h_\epsilon(x) = \begin{cases}
\sqrt{1-(x+1)^2} & -2 \leq x \leq -\epsilon, \\
\epsilon &  \epsilon \leq x \leq 2 - \epsilon, \\
\sqrt{ \epsilon^2 - (x - 2 + \epsilon)^2 } &  2 - \epsilon \leq x \leq 2.
\end{cases}
\end{equation}
Moreover, assume that $h_\epsilon$ is decreasing on $(-1,2)$.
We construct manifolds $M_\epsilon$ by revolving the graphs $y = h_\epsilon(x)$ around the $x$-axis.
Sormani has shown that in this case the manifolds $M_\epsilon$ converge in the intrinsic flat sense as $\epsilon \downarrow 0$ to the unit sphere $S^1$ \cite[Example A.4]{sormani_intrinsic_2011}.
Also observe that $\mathrm{Vol}(M_\epsilon) \to \mathrm{Vol}(S^1)$.
We choose test functions $f_\epsilon$ on $M_\epsilon$, that only depend on $x$, increasing in $x$, such that 
\begin{equation}
f_\epsilon(x) := \begin{cases}
- c_\epsilon & x \leq -\epsilon,\\
\sin\left(\frac{\pi x}{4} \right) & \epsilon \leq x \leq 2 - \epsilon.
\end{cases}
\end{equation}
where $c_\epsilon$ is chosen in such a way that $f_\epsilon$ has zero average on $M_\epsilon$. 
After calculating the Rayleigh-quotien for $f_\epsilon$, we observe that
\begin{equation}
\limsup_{\epsilon \downarrow 0} \lambda_1(M_\epsilon)
\leq \limsup_{\epsilon\downarrow 0} \frac{\int_{M_\epsilon} |\nabla f_\epsilon|^2 d \Ha^2 }{\int_{M_\epsilon} |f_\epsilon |^2 d\Ha^2} 
= \left(\frac{\pi}{4}\right)^2 < 2 = \lambda_1( S^1 ).
\end{equation}
This shows that the first eigenvalue can actually jump up in the limit.

\subsection{Example: cancellation can make eigenvalues drop}

We conclude with an example that shows that we cannot remove the assumption of the convergence of the volumes from Theorem \ref{th:main}. 
Without this assumption, cancellation can occur, which can make the eigenvalues drop down in the limit.

For $x \in \R^3$ let $Q_{q,r}(x)\subset \R^3$ denote the following rectangular box:
\begin{equation}
Q_{q,r}(x) = [x_1 - q , x_1 + q]
\times [x_2 - q, x_2 + q]
\times [x_3 - r, x_3 + r].
\end{equation}
Let $e_1$ denote the unit vector $(1,0,0)\in \R^3$.

Consider the sequence $M_j = \partial W_j$ where $W_j$ is given as a set by
\begin{equation}
W_j = Q_{1,1}(2 e_1) \cup Q_{1,1}(- 2 e_1) \cup Q_{1,4^{-j}}(0).
\end{equation}
At first sight, one might be tempted to think that as $j\to\infty$ the $M_j$ converge in the intrinsic flat sense to the boundary of the two cubes $Q_{1,1}(2 e_1)$ and $Q_{1,1}(-2 e_1)$. However, the induced embedding of $M_j$ into $\mathbb{R}^3$ is not \emph{isometric}. Yet, we can use the construction by Sormani in \cite[Example A.19]{sormani_intrinsic_2011}, and create manifolds $\tilde{M}_j$ with a lot of tunnels from one side of the thin sheet to the other side. 
To be more precise, let
\begin{equation}
P_j := Q_{1,4^{-j}}(0) \backslash 
	\bigcup_{k,\ell = -2^j+1}^{2^j - 1} Q_{4^{-j},4^{-j}}
		\left(\frac{k}{2^{j}},\frac{\ell}{2^{j}},0\right)
\end{equation}
and
\begin{equation}
\tilde{W_j} = Q_{1,1}(2 e_1) \cup P_j \cup Q_{1,1}(- 2 e_1).
\end{equation}
and let $\tilde{M}_j = \partial \tilde{W}_j$ (to be interpreted as the boundary of the set in Euclidean space).
We claim that 
\begin{equation}
\tilde{M}_j \to \partial\left( Q_{1,1}(2e_1) \cup Q_{1,1}(-2e_1) \right) =: \tilde{M}.
\end{equation}
in the intrinsic flat sense, where $\tilde{M}$ is endowed with the distance $d_Y$, which is the induced length distance on the space $Y$ given by
\begin{equation}
Y = \partial Q_{1,1}(2 e_1) \cup \partial Q_{0,1}(0) \cup \partial Q_{1,1}(-2e_1),
\end{equation}
which is the Gromov-Hausdorff limit of $\tilde{M}_j$.

By the isometric product $A\times B$ of two metric spaces $(A,d_A)$ and $(B,d_B)$ we mean the Cartesian product endowed with the distance
\begin{equation}
d_{A\times B} ((a_1,b_1),(a_2,b_2)) = \sqrt{(a_1-a_2)^2 + (b_1-b_2)^2}.
\end{equation}
We consider the metric space
\begin{equation}
\begin{split}
Z_j &:= \left( \tilde{M}_j \times [0,\tfrac{1}{j}]\right) \cup  
\left( Y \times [-\tfrac{1}{j},0]\right)\\
& \qquad \cup \left(\left(\partial Q_{1,1}(2 e_1) \cup Q_{1,4^{-j}}(0) \cup \partial Q_{1,1}(-2e_1)\right) \times \{0\} \right).
\end{split}
\end{equation}
Note that for $j$ large enough, the embeddings 
\begin{alignat}{2}
\phi_j &: M_j \to Z_j ,\quad & \phi_j(x) &= (x,\tfrac{1}{j}),\\
\psi_j &: Y \to Z_j  ,\quad & \psi_j(y) &= (y, -\tfrac{1}{j}),
\end{alignat}
are isometric. 
Let $B_j \in I_3(Z_j)$ be the current
\begin{equation}
B_j := \llbracket \tilde{M}_j \times [0,\tfrac{1}{j}] \rrbracket 
	  + \llbracket \tilde{M} \times [-\tfrac{1}{j} ,0] \rrbracket
	  - \llbracket P_j \times \{0\} \rrbracket.
\end{equation}
Then 
\begin{equation}
(\phi_j)_\# \llbracket \tilde{M}_j \rrbracket - (\psi_j)_\# \llbracket \tilde{M} \rrbracket = \partial \llbracket B_j \rrbracket.
\end{equation}
Since
\begin{equation}
\Mass(\llbracket B_j \rrbracket)  \leq \frac{1}{j} \left( \mathrm{Vol}(\tilde{M}_j) + \mathrm{Vol}(\tilde{M}) \right) + \mathrm{Vol}(Q_{1,4^{-j}}(0)) \to 0,
\end{equation}
as $j \to \infty$, it follows that indeed $\tilde{M}_j$ converges to $\tilde{M}$ in the intrinsic flat sense.

Recall that for a smooth $n$-dimensional manifold $M$, the Cheeger's constant $h(M)$ is defined as
\begin{equation}
h(M) := \inf_E \frac{\Ha^{n-1}(E)}{\min(\Ha^{n}(A),\Ha^n(B))},
\end{equation}
where the infimum runs over all smooth $(n-1)$-dimensional submanifolds $E$ of $M$ that divide $M$ into two disjoint submanifolds $A$ and $B$ \cite{cheeger_lower_1970}. 
Cheeger's inequality states that
\begin{equation}
\lambda_1(M) \geq \frac{h(M)^2}{4}.
\end{equation}

As we can take a minimizing function with average zero that is constant on each of the two cubes, we have $\lambda_1 ( \tilde{ M } ) = 0$. 
However, Cheeger's inequality implies that $\lambda_1(\tilde{M}_j)$ is uniformly bounded away from zero. The idea is as follows. Let $E$ be a $1$-dimensional submanifold of $\tilde{M}_j$, separating $\tilde{M}_j$ into two disjoint submanifolds $A$ and $B$, such that $\Ha^{2}(A) \leq \Ha^2(B)$.
Without loss of generality we may assume that $A$ is connected.
If $\dia(E) \leq 1$, there are constants $c_1$ and $c_2$ (independent of $j$) such that
\begin{equation}
\Ha^2(A) \leq c_1 \dia(E)^2, \qquad \Ha^1(E) \geq c_2 \dia(E),
\end{equation}
so that
\begin{equation}
\frac{\Ha^1(E)}{\Ha^2(A)} \geq \frac{c_2}{c_1} \frac{1}{\dia(E)} \geq \frac{c_2}{c_1}.
\end{equation}
If $\dia(E) > 1$, $\Ha^1(E) > c_3 > 0$ and 
\begin{equation}
\frac{\Ha^1(E)}{\Ha^2(A)} \geq \frac{c_3}{2 \Ha^2(\tilde{M_j})}.
\end{equation}
As the volume $\Ha^2(\tilde{M_j})$ is uniformly bounded, the uniform lower bound on $h(\tilde{M}_j)$ follows.

It is true that Cheeger's inequality in this form applies to smooth manifolds, but we can for instance approximate the spaces in the flat distance in Euclidean space to conclude the bound still holds.
Therefore, in the case of this example,
\begin{equation}
\limsup_{j \to \infty} \lambda_1(\tilde{M}_j) > \lambda_1(\tilde{M}).
\end{equation}
The example can be easily modified to obtain a connected limit space, by adding a thin tube connecting the one cube to the other. In that case $\lambda_1$ of the limit space can be made arbitrarily small by making the tube arbitrarily thin.

\appendix

\section{Decomposition of one-dimensional integral currents into curves}
\label{se:Decomposition}
In this section we will prove that any one-dimensional integral current can be decomposed in the sum of pushforwards under Lipschitz functions of currents associated to intervals. 
The Euclidean result is well-known, and can be found in \cite[4.2.25]{federer_geometric_1996}. 
The proof relies on the deformation theorem, however, which is not available in arbitrary metric spaces. 
To obtain the decomposition result, we therefore approximate the original currents by finite-dimensional ones, and take an appropriate limit.

The result is an easier version of the decomposition theorems of normal one-dimensional currents by Paolini and Stepanov \cite{paolini_decomposition_2012,paolini_structure_2013}. 
The decomposition theorem for integral currents can also be proved using their results as a starting point, but we chose to instead give a simpler argument, which is possible because the currents are integral. However, there are many parallels with the proofs in \cite{paolini_decomposition_2012}.

Throughout this section, $X$ will denote a complete metric space.

We first define what it means for an integral current (of arbitrary dimension) to be (in)decomposable. 
This definition is exactly the same as in the Euclidean case.

\begin{definition}
A current $T \in I_n(X)$ is called decomposable if there are two nonzero currents $T_1, T_2 \in I_n(X)$ such that
\begin{equation}
T = T_1 + T_2, \qquad \Mass(T) = \Mass(T_1) + \Mass(T_2), \qquad \Mass(\partial T) = \Mass(\partial T_1) + \Mass(\partial T_2).
\end{equation}
If a current is not decomposable, it is called indecomposable.
\end{definition}

The following theorem is the main result of this section. 
It is an immediate consequence of Proposition \ref{pr:Decompn} and Lemmas \ref{le:acyclicDecomp} and \ref{le:cyclicDecomp} that follow below.

\begin{theorem}
\label{th:DecompositionOneD}
Let $T \in I_1(X)$.
Then, there is a sequence $T_1, T_2, \dots$ in $I_1(X)$ such that
\begin{equation}
T = \sum_{i=1}^\infty T_i, \qquad \Mass(T) = \sum_{i=1}^\infty \Mass(T_i), \qquad \Mass(\partial T) = \sum_{i=1}^\infty \Mass(\partial T_i),
\end{equation}
and for every $i=1, 2, \dots$ there is a Lipschitz function $\sigma_i:[0,L_i] \to X$ with $\Lip(\sigma_i) \leq 1$, such that 
\begin{equation}
(\sigma_i)_\# \llbracket \chi_{[0,L_i]} \rrbracket = T_i.
\end{equation}
\end{theorem}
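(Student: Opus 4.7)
My plan is to divide the proof into two conceptual pieces corresponding to Proposition~\ref{pr:Decompn} and Lemmas~\ref{le:acyclicDecomp}, \ref{le:cyclicDecomp}: first, decomposing an arbitrary $T \in I_1(X)$ into at most countably many indecomposable integral $1$-currents, and second, showing that each indecomposable piece is the pushforward of an interval under a $1$-Lipschitz map.

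For the first piece, I would argue by a Zorn's lemma (or transfinite exhaustion) procedure applied to families of nontrivial, pairwise mass-additive integral subcurrents of $T$. Because $\Mass(T)$ and $\Mass(\partial T)$ are finite, only countably many pieces can carry positive mass, so the process terminates at a countable collection. Maximality then forces each piece to be indecomposable and forces the remainder to have zero mass, yielding $T = \sum_i T_i$ with additivity of both $\Mass$ and $\Mass \circ \partial$. This step is soft and avoids the Euclidean deformation theorem because we only need existence, not explicit geometric structure, at this stage.

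The second piece is the main obstacle, and this is where the finite-dimensional approximation indicated in the introduction enters. Embed $X$ isometrically into $\ell^\infty$ via the Kuratowski embedding using a dense sequence, so that the coordinate projections $\pi_j$ are $1$-Lipschitz. For each $N$, the pushforward $\pi^N_\# T$ with $\pi^N = (\pi_1, \dots, \pi_N)$ is a Euclidean integral $1$-current, to which Federer's classical decomposition \cite[4.2.25]{federer_geometric_1996} applies, giving $1$-Lipschitz parametrizations $\sigma^N : [0, L_N] \to \R^N$ of intervals. Indecomposability of $T$ should translate (after possibly pairing up components at finite levels and absorbing cancellations) to compatibility of a distinguished parametrization across scales. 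A coordinatewise diagonal Arzel\`{a}--Ascoli argument then produces a limit $\sigma : [0, L] \to \ell^\infty$ with $\Lip(\sigma) \leq 1$ and $L \leq \Mass(T)$.

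The delicate remaining points are (i) that $\sigma$ takes values in the embedded copy of $X$, which follows from $T$ being concentrated on $\set(T) \subset X$ and the fact that the images of the $\sigma^N$ accumulate on this set; and (ii) that $\sigma_\# \llbracket \chi_{[0,L]} \rrbracket = T$, which I would verify by evaluating on arbitrary tuples $(f, \pi) \in \mathcal{D}^1(\ell^\infty)$, using continuity of the pushforward under the convergence $\sigma^N \to \sigma$ and mass convergence to rule out cancellation in the limit. The cyclic case $\partial T = 0$ is treated in parallel, with $\sigma(0) = \sigma(L)$ arising as the limit of the corresponding identity at each finite level, or equivalently by cutting at a point of $\set(T)$ and reducing to the acyclic case.
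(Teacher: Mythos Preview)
Your Step 1 has a genuine gap. Maximality of a family of mass-additive subcurrents under set inclusion does not force the individual pieces to be indecomposable: if some $T_i$ decomposes as $T_i' + T_i''$, replacing $T_i$ by the pair $\{T_i', T_i''\}$ yields a \emph{different} family, not a strictly larger one in the inclusion order, so Zorn says nothing. If instead you order decompositions by refinement, you must produce upper bounds for chains, and the limit along an infinite chain of refinements is precisely where compactness and closure for integral currents enter --- which in the metric setting rest on the isoperimetric inequality in $\ell^\infty$. This is why the paper's Proposition~\ref{pr:Decompn} explicitly invokes that inequality; the step is not soft.

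Step 2 is closer in spirit to the paper but also needs repair. Coordinate truncations $\pi^N$ on $\ell^\infty$ need not satisfy $\pi^N x \to x$ on the Kuratowski image of $X$ (that image typically does not lie in $c_0$), so $\pi^N_\# T \rightharpoonup T$ can fail; the paper instead uses finite-rank contractions furnished by the metric approximation property of $\ell^\infty$, which converge to the identity uniformly on compacta. More seriously, indecomposability of $T$ does not pass to the projected currents, so there is no ``distinguished parametrization across scales'' to track, and your pairing-and-cancellation remark is where the real work hides. The paper sidesteps this by reversing your two steps: it first extracts the components with boundary directly from $T$ (Lemma~\ref{le:acyclicDecomp}) --- there are exactly $\Mass(\partial T)/2$ of them, so finitely many curves can be followed through the finite-dimensional approximations and passed to the limit via Arzel\`a--Ascoli --- and only afterwards decomposes the leftover cycle into indecomposable pieces (Proposition~\ref{pr:Decompn}), each of which is reduced to the acyclic case by a single cut (Lemma~\ref{le:cyclicDecomp}).
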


Just as in the Euclidean case, integral $n$-currents can be decomposed into indecomposable pieces.

\begin{proposition}
\label{pr:Decompn}
Let $T \in I_n(X)$.
Then, there is a sequence of indecomposable integral $n$-currents $T_1, T_2, \dots$ such that
\begin{equation}
T = \sum_{i=1}^\infty T_i, \qquad \Mass(T) = \sum_{i=1}^\infty \Mass(T_i), \qquad \Mass(\partial T) = \sum_{i=1}^\infty \Mass(\partial T_i).
\end{equation}
\end{proposition}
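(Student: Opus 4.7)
My plan mirrors the Euclidean argument (Federer, 4.2.25) but substitutes Wenger's compactness theorem for the role of the deformation theorem. For $S, T \in I_n(X)$ write $S \leq T$ to mean $T - S \in I_n(X)$ together with both $\mathbf{M}(T) = \mathbf{M}(S) + \mathbf{M}(T - S)$ and $\mathbf{M}(\partial T) = \mathbf{M}(\partial S) + \mathbf{M}(\partial (T-S))$. The proposition reduces to two ingredients: (i) every nonzero $T \in I_n(X)$ admits a nonzero indecomposable $S$ with $S \leq T$; and (ii) a greedy exhaustion of $T$ by such pieces.

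The technical core is (i), which I would establish via Zorn's lemma on the poset $(\mathcal{P}(T), \geq)$, where $\mathcal{P}(T) = \{S \in I_n(X) : 0 \neq S \leq T\}$, and indecomposable subcurrents correspond exactly to maximal elements under $\geq$. For a chain $\{S_\alpha\} \subset \mathcal{P}(T)$ descending in $\leq$, masses and boundary masses are monotone decreasing and bounded by $\mathbf{M}(T)$ and $\mathbf{M}(\partial T)$, so Wenger's compactness theorem extracts a weak subsequential limit $S_{\alpha_j} \rightharpoonup S_\infty \in I_n(X)$. Setting $m := \lim_j \mathbf{M}(S_{\alpha_j})$, for each fixed $\alpha$ the additivity $\mathbf{M}(S_\alpha) = \mathbf{M}(S_{\alpha_j}) + \mathbf{M}(S_\alpha - S_{\alpha_j})$ inherited from the chain, combined with lower semicontinuity of mass applied to $S_{\alpha_j} \rightharpoonup S_\infty$ and to $S_\alpha - S_{\alpha_j} \rightharpoonup S_\alpha - S_\infty$, sandwiches $\mathbf{M}(S_\alpha)$ between $\mathbf{M}(S_\infty) + \mathbf{M}(S_\alpha - S_\infty)$ and itself, forcing the equalities $\mathbf{M}(S_\infty) = m$ and $\mathbf{M}(S_\alpha) = \mathbf{M}(S_\infty) + \mathbf{M}(S_\alpha - S_\infty)$. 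The identical argument for boundaries certifies $S_\infty \leq S_\alpha$ for every $\alpha$. One arranges the chain with $\mathbf{M}(S_{\alpha+1}) \geq \mathbf{M}(S_\alpha)/2$ at each successor step (always possible, since a decomposition $S = A + B$ has $\max(\mathbf{M}(A),\mathbf{M}(B)) \geq \mathbf{M}(S)/2$), forcing $m > 0$ and $S_\infty \in \mathcal{P}(T)$; Zorn then produces a maximal, hence indecomposable, element.

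Given (i), the greedy exhaustion is standard. Set $R_0 := T$, define
\[
\alpha_k := \sup\{ \mathbf{M}(S) : S \leq R_k,\ S \neq 0 \text{ and indecomposable}\},
\]
pick indecomposable $T_{k+1} \leq R_k$ with $\mathbf{M}(T_{k+1}) \geq \alpha_k/2$, and set $R_{k+1} := R_k - T_{k+1}$. Telescoping the finite-stage additivities yields $\mathbf{M}(T) = \sum_{k=1}^N \mathbf{M}(T_k) + \mathbf{M}(R_N)$ and the same for $\partial T$, so $\sum_k \mathbf{M}(T_k) \leq \mathbf{M}(T)$ and consequently $\alpha_k \to 0$. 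The partial sums $\sum_{k=1}^N T_k$ are Cauchy in mass and in boundary mass, converging to $T_\infty \in I_n(X)$ with $R_N \rightharpoonup T - T_\infty$, and weak lower semicontinuity promoted by the finite-stage identities yields $\mathbf{M}(T) = \mathbf{M}(T_\infty) + \mathbf{M}(T - T_\infty)$ and the corresponding boundary identity. If $T - T_\infty \neq 0$, ingredient (i) would supply an indecomposable $S \leq T - T_\infty$ with $\mathbf{M}(S) > 0$; chasing additivity shows $S \leq R_N$ for every $N$, hence $\mathbf{M}(S) \leq \alpha_N \to 0$, a contradiction. Thus $T = T_\infty = \sum_k T_k$ with all required additivities.

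The main obstacle is (i): the Euclidean proof leans on deformation-theoretic tools unavailable here, and the replacement via Wenger compactness must transfer both the mass and boundary-mass additivities to the weak limit of a chain simultaneously, since lower semicontinuity alone gives only one inequality and the reverse inequality has to be manufactured from the chain's own additivity. Ensuring that the limit along a descending chain does not collapse to zero, handled via the halving refinement, is the accompanying subtlety.
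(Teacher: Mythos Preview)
Your exhaustion step (ii) is correct once (i) is in hand, but the Zorn argument for (i) has a real gap. Zorn's lemma requires \emph{every} chain in $(\mathcal{P}(T),\geq)$ to have an upper bound there, i.e.\ every $\leq$-descending chain must admit a nonzero lower bound; you do not get to ``arrange the chain.'' And the halving refinement does not keep $m>0$: take $T=\sum_{k\geq 1} C_k\in I_1(\R^2)$ where the $C_k$ are pairwise disjoint oriented circles of radius $2^{-k}$, and set $S_j=\sum_{k>j}C_k$. Then $S_{j+1}\leq S_j$ (all boundaries vanish, and masses add since the circles are disjoint), $\Mass(S_{j+1})=\tfrac12\Mass(S_j)$ exactly, so your halving condition is met at every step, yet $\Mass(S_j)\to 0$ and the weak limit is $0\notin\mathcal{P}(T)$. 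The same example defeats a transfinite-recursion reading of your construction: the recursion reaches $0$ at the first limit ordinal. (A side remark: the compactness you want here is not Wenger's intrinsic-flat compactness for current \emph{spaces} but the Ambrosio--Kirchheim compactness and closure theorems for currents in a fixed space, with tightness supplied by $\|S_\alpha\|\leq\|T\|$.)

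This collapse-to-zero is exactly the obstruction Federer's argument overcomes with the isoperimetric inequality, and the paper follows Federer verbatim after a Kuratowski embedding into $\ell^\infty$, substituting the Ambrosio--Kirchheim isoperimetric inequality in $\ell^\infty$ for the Euclidean one. Compactness alone yields closure of $\{S:S\leq T\}$ under weak limits, but not the existence of nonzero minimal elements; the isoperimetric input is what Federer's scheme uses to secure an indecomposable piece. You correctly flag this as the crux in your last paragraph, but the halving device does not resolve it.
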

\begin{proof}
We may assume without loss of generality that $X = \ell^\infty$ (indeed, we may first restrict $X$ to $\overline{\set(T)} \subset X$, which is separable, and then use the Kuratowski embedding to embed $\overline{\set(T)}$ into $\ell^\infty$).
The proof of the proposition is then exactly the same as in the Euclidean case (cf. \cite[4.2.25]{federer_geometric_1996}),
except one needs to replace the Euclidean isoperimetric inequality by the isoperimetric inequality in $\ell^\infty$, that is, use the fact that there is a constant $C(n)$ such that for any $S \in I_n(\ell^\infty)$ with $\partial S = 0$ there exists an $S_0 \in I_{n+1}(\ell^\infty)$ with $\partial S_0 = S$ and
\begin{equation}
\Mass(S_0) \leq C(n) \Mass(S)^{\frac{n+1}{n}}.
\end{equation}
This fact is proved by Ambrosio and Kirchheim in \cite[Appendix B]{ambrosio_currents_2000}.
\end{proof}

In order to obtain a decomposition, we approximate the current $T$ by finite-dimensional projections, use the standard decomposition theorem for those, and take a weak limit to obtain a decomposition for the original current. 
The decomposition is non-unique, and one needs to be careful to select which components of the approximate currents converge to a component of the original current. 
We initially only keep track of components with boundary.

\begin{lemma}
\label{le:acyclicDecomp}
Let $T \in I_1(X)$.
Then there exist one-dimensional integral currents $T_i$, for $i = 0, 1, \dots, N = \Mass(\partial T) / 2$, such that $T = \sum_{i=0}^N T_i$, $\partial T_0 = 0$, and 
\begin{equation}
\Mass(T) = \sum_{i=0}^N \Mass(T_i), \qquad \Mass(\partial T) = \sum_{i=0}^N \Mass(\partial T_i),
\end{equation}
such that for $i=1, \dots, N$, 
\begin{equation}
T_i = (\sigma_i)_\# \llbracket  \chi_{[0,L_i]} \rrbracket,
\end{equation}
for Lipschitz functions $\sigma_i : [0,L_i] \to X$ with $\Lip(\sigma_i) \leq 1$.
\end{lemma}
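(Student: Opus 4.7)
The plan is to reduce the problem to the classical Euclidean decomposition \cite[4.2.25]{federer_geometric_1996} by approximating $T$ by finite-dimensional currents. By the Kuratowski embedding, assume $X = \ell^\infty$, so that one has the $1$-Lipschitz coordinate projections $\pi_n:\ell^\infty\to\R^n$; write the boundary as $\partial T = \sum_{k=1}^{2N}\epsilon_k\delta_{p_k}$ with $\epsilon_k\in\{\pm 1\}$ and $\sum_k\epsilon_k=0$, and set $T^{(n)}:=(\pi_n)_\# T \in I_1(\R^n)$. For $n$ large enough the images $\pi_n(p_k)$ are distinct, so $\Mass(\partial T^{(n)}) = 2N$, while $\Mass(T^{(n)})\leq\Mass(T)$ for every $n$.

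Applying Federer's classical result in $\R^n$ gives a decomposition
\[
T^{(n)} = T^{(n)}_0 + \sum_{i=1}^N T^{(n)}_i, \qquad T^{(n)}_i = (\gamma^{(n)}_i)_\#\llbracket \chi_{[0,L^{(n)}_i]}\rrbracket,
\]
with $\partial T^{(n)}_0 = 0$, each $\gamma^{(n)}_i:[0,L^{(n)}_i]\to\R^n$ a $1$-Lipschitz simple curve matching a positive to a negative boundary point, and with additive body and boundary mass. Since only finitely many ($N!$) pairings of endpoints are possible, extract a subsequence of $n$'s along which the pairing is fixed, say $\gamma^{(n)}_i$ joins $\pi_n(p_i^-)$ to $\pi_n(p_i^+)$. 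Viewing each $T^{(n)}_i$ as a current in $\ell^\infty$ via $\R^n\hookrightarrow\ell^\infty$, the uniform bounds $\Mass(T^{(n)}_i)\leq\Mass(T)$ and $\Mass(\partial T^{(n)}_i)\leq 2$ allow Wenger's compactness theorem \cite{wenger_compactness_2011} to produce a further diagonal subsequence along which $T^{(n)}_i\to\tilde T_i$ in flat distance in $\ell^\infty$ for every $i=0,1,\dots,N$.

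The main obstacle is to identify $\sum_{i=0}^N\tilde T_i$ with $T$: since $\pi_n$ does not tend to the identity uniformly on compacta in $\ell^\infty$, the assertion $(\pi_n)_\# T\to T$ flatly is not automatic. The plan is to replace $\pi_n$ by a $1$-Lipschitz Kuratowski-type map $\rho_n:\ell^\infty\to\R^n$ built from distances to a countable dense subset of $\supp T$, so that $\rho_n\to\mathrm{id}$ uniformly on the rectifiable set on which $\|T\|$ is concentrated; a chain-homotopy argument based on the linear interpolation between $\mathrm{id}$ and $\rho_n$ inside $\ell^\infty$ then yields $(\rho_n)_\# T\to T$ in flat distance, and rerunning the previous construction with $\rho_n$ in place of $\pi_n$ gives the identification. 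Mass and boundary-mass additivity in the limit are then forced by lower semicontinuity of mass under flat convergence combined with the upper bounds inherited from the finite-dimensional decompositions.

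Finally, each limit $\tilde T_i$ ($i\geq 1$) must be exhibited as a pushforward of an interval by a $1$-Lipschitz curve. The curves $\gamma^{(n)}_i$ may be taken arc-length parametrized, so after rescaling to the common domain $[0,1]$ they are $(L^{(n)}_i)$-Lipschitz with $L^{(n)}_i\leq\Mass(T)$; their images lie in $\rho_n(\supp T)$, and by construction every cluster point of these images lies in the closed separable set $\supp T \cup \{p_i^\pm\}$. A diagonal Arzela--Ascoli extraction in this separable ambient context produces a $1$-Lipschitz limit curve $\sigma_i:[0,L_i]\to\ell^\infty$ (with $L_i=\lim L^{(n)}_i$) whose pushforward of $\llbracket \chi_{[0,L_i]}\rrbracket$ agrees, by continuity of the current evaluation on test tuples, with the flat limit $\tilde T_i$, completing the desired decomposition.
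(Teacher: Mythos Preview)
Your overall architecture --- push $T$ forward by $1$-Lipschitz maps into $\R^n$, invoke Federer's Euclidean decomposition, and pass to the limit --- is exactly the paper's strategy. The problem lies in the maps you choose. You correctly observe that the coordinate projections $\pi_n$ do not approximate the identity on compacta of $\ell^\infty$, but your proposed remedy has the same defect: a Kuratowski-type map $\rho_n(x)=(d(x,q_1),\dots,d(x,q_n))$, viewed back in $\ell^\infty$ by zero-padding, does \emph{not} satisfy $\rho_n\to\mathrm{id}$ on $\supp T$. Indeed $\|\rho_n(x)-\rho_\infty(x)\|_\infty=\sup_{j>n}d(x,q_j)$, which is bounded away from zero in general; so your chain-homotopy argument for $(\rho_n)_\# T\to T$ in flat distance does not go through. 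The paper closes this gap by invoking the \emph{metric approximation property} of $\ell^\infty$: there exist finite-rank projections $P^j$ with operator norm at most $1$ and $\|P^j x-x\|_\infty\leq 1/j$ uniformly on prescribed compact sets $K_j$ exhausting $\supp\|T\|$. From this one proves $P^j_\# T\rightharpoonup T$ weakly and $\Mass(P^j_\# T)\to\Mass(T)$ directly, with no homotopy needed.

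Two further points. First, Wenger's compactness theorem yields a limit only after passing to an abstract common target space, which loses the information that the pieces live inside $\ell^\infty$ and are dominated by $\|T\|$; the paper instead uses the Ambrosio--Kirchheim compactness and closure theorems inside $\ell^\infty$, for which the key hypothesis is \emph{uniform tightness} of the $\|T_i^j\|$ --- and this follows from $\|T_i^j\|\leq\|P^j_\# T\|\rightharpoonup\|T\|$. You should make this tightness explicit. Second, your Arzel\`a--Ascoli step needs pointwise precompactness of $\{\sigma_i^j(s)\}_j$ in $\ell^\infty$; in the paper this again comes from the weak convergence of the measures $\|T_i^j\|$ (so that the curves hit neighbourhoods of the fixed compact $\set(T_i)$), not merely from the separability of $\supp T$.
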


\begin{proof}
Again, by using the Kuratowski embedding, we may assume that $X = \ell^\infty$. 
Let $K_j$ be a sequence of compact sets
$K_j \subset K_{j+1}$, $\|T\|(X \backslash K_j) \leq 1/j$, 
such that $\|T\|$ is concentrated on $\cup_j K_j$ and $\|\partial T\|$ is concentrated on $\cap_j K_j$. 
We  may also assume that $T$ is supported on $B_R(0)$ for some $R > 0$.  
Indeed, since $T$ has finite mass, there exists an $R > 0$ such that 
\begin{equation}
\partial (T \llcorner (X \backslash B_R(0)) ) = - \langle T, \|. \|_\infty, R \rangle = 0,
\end{equation}
so that $T\llcorner (X\backslash B_R(0))$ can be included in $T_0$.

Since $\ell^\infty$ has the metric approximation property, there are projections $P^j: \ell^\infty \to \ell^\infty$ such that the operator norm of $P^j$ is less than $1$, and for every $x \in K_j$, $\|P^j x - x\|_\infty \leq 1/j$. 
Paolini and Stepanov recorded a proof of this statement in \cite[Lemma 5.7]{paolini_decomposition_2012}.

It holds that $P^j_\# T \rightharpoonup T$ weakly.
Indeed, if $\omega = (f, \pi_1, \dots, \pi_n) \in \mathcal{D}^{n+1}(X)$, with $\Lip(\pi_i) \leq 1$ for all $i=1,\dots,n$, then
\begin{equation}
\begin{split}
|P^j_\# T (\omega) - T(\omega)|
&= |T( (P^j)^\#\omega ) - T(\omega) |\\
&\leq |T ( f \circ P^j, \pi_1 \circ P^j, \dots, \pi_n \circ P^j ) - T(f\circ P^j, \pi_1, \dots, \pi_n) | \\
&\quad + |T(f \circ P^j, \pi_1, \dots, \pi_n) - T(f, \pi_1, \dots, \pi_n) |.
\end{split}
\end{equation}
By \cite[Theorem 5.1]{ambrosio_currents_2000},
\begin{equation}
\begin{split}
&|T ( f \circ P^j, \pi_1 \circ P^j, \dots, \pi_n \circ P^j ) - T(f\circ P^j, \pi_1, \dots, \pi_n) | \\
&\qquad \leq \sum_{i=1}^n \Big( \int_X |f\circ P^j| |\pi_i - \pi_i \circ P^j| d\| \partial T \| \\
&\qquad \qquad + \Lip(f \circ P^j) \int_{\supp f} |\pi_i - \pi_i\circ P^j| d\|T\|\Big) \\
&\qquad \leq \frac{n}{j} \int_X |f \circ P^j| d\|\partial T\| + \frac{n}{j} \Lip(f) \Mass(T) + 2 n R \|T\|\left(X \backslash \bigcup_{\ell=1}^j K_\ell \right),
\end{split}
\end{equation}
which tends to $0$ as $j\to\infty$. 
The other term is estimated by
\begin{equation}
\begin{split}
&|T (f \circ P^j, \pi_1, \dots, \pi_n) - T(f , \pi_1, \dots, \pi_n )|  \\
&\qquad \leq \int_X |f \circ P^j - f| d\|T\| \\
&\qquad \leq \Lip(f) \frac{1}{j} \Mass(T) + 2 \sup_{B_R(0)} (f) \|T\|\left(X \backslash \bigcup_{\ell=1}^j K_\ell \right),
\end{split}
\end{equation}
which also tends to $0$ as $j\to \infty$.
This showed that $P^j_\#T \rightharpoonup T$ weakly. 

Since $P^j$ is contractive, $\Mass(P^j_\# T) \leq \Mass(T)$ and since the mass is lower semicontinuous under weak convergence, $\Mass(P^j_\# T) \to \Mass(T)$.
It follows also that $\| P^j_\# T \| \rightharpoonup \| T\|$ in the weak sense of measures.
 
By the corresponding theorem in the Euclidean case, which can be proved using the deformation theorem, we know that $P^j_\# T$ can be decomposed, that is, for $i$ large enough, there exist (indecomposable for $i \geq 1$) integral currents $T_i^j$ such that $P^j_\# T = \sum_{i=0}^N T_i^j$, $\partial T_0^j = 0$, and  
\begin{equation}
\Mass(P^j_\# T) = \sum_{i=0}^N \Mass(T_i^j), \qquad \Mass(\partial (P^j_\# T) ) = \sum_{i=0}^N \Mass(\partial T_i^j).
\end{equation} 
Moreover, we note that the measures $\|T_i^j\|$ are uniformly tight, as they are absolutely continuous with respect to $\|P_\#^j T\|$, and these measures are uniformly tight as they weakly converge to $T$ (see also \cite[Lemma B.2]{paolini_decomposition_2012}). Therefore, by the compactness theorem \cite[Theorem 5.2]{ambrosio_currents_2000} and closure theorem \cite[Theorem 8.5]{ambrosio_currents_2000}, possibly selecting a subsequence, we can assume that $T_i^j \rightharpoonup T_i$ as $j \to \infty$ weakly as currents.
Set $L_i := \Mass(T_i)$.

Again, by the Euclidean decomposition theorem, we know that there exist Lipschitz functions $\sigma_i^j:[0,L_i]\to X$ such that $\Lip(\sigma_i^j) \leq \Mass(T_i^j) / L_i$, and 
\begin{equation}
(\sigma_i^j)_\# \llbracket \chi_{[0,L_i]} \rrbracket = T_i^j.
\end{equation}
By a slight variation of the Arzela-Ascoli Theorem, for a subsequence, $\sigma_i^j \to \sigma_i$ strongly in the continuous topology, where $\Lip(\sigma_i) \leq 1$. 
(One way to see this, is to select a dense sequence of points $p_\ell \in \set(T_i)$, and to note that for every $\ell$, since $\|T_i^j\| \to \|T_i\|$ weakly in the sense of measures, there are values $t_i^j$ such that $\sigma_i^j(t_i^j) \to p_\ell$.)
Therefore, by the basic continuity property of currents, $(\sigma_i^j)_\# \llbracket \chi_{[ 0,L_i^j]} \rrbracket \rightharpoonup (\sigma_i)_\# \llbracket \chi_{[0,L]} \rrbracket$ as $j \to \infty$, and thus by uniqueness of weak limits,
\begin{equation}
(\sigma_i)_\# \llbracket \chi_{[0,L_i]} \rrbracket = T_i.
\end{equation}
\end{proof}

We are left with finding the components that have no boundary.
However, we can easily reduce to the previous case in case of indecomposable cyclic currents. 
This is the content of the following lemma.

\begin{lemma}
\label{le:cyclicDecomp}
Let $T \in I_1(X)$ be indecomposable, with $\partial T = 0$. 
Then, with $L= \Mass(T)$, there exists a curve $\sigma : [0,L] \to X$, $\Lip(\sigma) \leq 1$, such that
\begin{equation}
\sigma_\# \llbracket \chi_{[0,L]} \rrbracket = T.
\end{equation}
\end{lemma}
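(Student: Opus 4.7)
The plan is to cut the cyclic current open near a chosen point, apply Lemma \ref{le:acyclicDecomp} to the two resulting pieces, and use indecomposability of $T$ to reassemble them into a single arc-length parametrized loop.

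First I fix any $x_0 \in \set(T)$ and set $\pi(x) := d(x, x_0)$. Since $\int_0^\infty \Mass(\langle T, \pi, t\rangle)\,dt \leq \Lip(\pi)\Mass(T)$, for $\mathcal{L}^1$-a.e.\ $\epsilon > 0$ the slice $\langle T, \pi, \epsilon\rangle$ is a well-defined integral $0$-current (hence a finite signed integer sum of Diracs on $\{\pi = \epsilon\}$), and since $\partial T = 0$ we have $\partial(T \llcorner \{\pi > \epsilon\}) = -\langle T, \pi, \epsilon\rangle = -\partial(T \llcorner \{\pi \leq \epsilon\})$. I pick such an $\epsilon$ small enough that both $\|T\|(\{\pi \leq \epsilon\}) > 0$ and $\|T\|(\{\pi > \epsilon\}) > 0$; the first holds for every $\epsilon > 0$ because $x_0 \in \set(T)$, the second for small $\epsilon$ because $\|T\|$ puts no mass on $\{x_0\}$ and $\Mass(T) > 0$.

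Next I apply Lemma \ref{le:acyclicDecomp} separately to $T \llcorner \{\pi > \epsilon\}$ and to $T \llcorner \{\pi \leq \epsilon\}$, obtaining
\begin{equation*}
T \llcorner \{\pi > \epsilon\} = A^{(0)} + \sum_{i=1}^{N} A^{(i)}, \qquad T \llcorner \{\pi \leq \epsilon\} = B^{(0)} + \sum_{j=1}^{N'} B^{(j)},
\end{equation*}
with $A^{(0)}, B^{(0)}$ boundaryless, each $A^{(i)}, B^{(j)}$ equal to the pushforward of $\llbracket \chi_{[0,L]}\rrbracket$ by a $1$-Lipschitz curve whose domain length equals its mass, and with masses and boundary masses fully additive. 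Indecomposability of $T$ forces $A^{(0)} = 0 = B^{(0)}$: if, say, $A^{(0)} \ne 0$, then $T = A^{(0)} + (T - A^{(0)})$ is a splitting with disjoint supports (so masses add) and zero boundaries, in which the second piece is nontrivial because it contains $T \llcorner \{\pi \leq \epsilon\}$, contradicting indecomposability, and symmetrically for $B^{(0)}$.

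I then view $\mathcal{E} := \{A^{(i)}\}_{i=1}^N \cup \{B^{(j)}\}_{j=1}^{N'}$ as the oriented edges of a finite multigraph $G$ whose vertex set is $\supp\langle T, \pi, \epsilon\rangle$, each edge joining its two endpoints. Additivity of boundary masses combined with $\partial T = 0$ ensures equal in- and out-degree at every vertex, so each connected component of $G$ admits an Eulerian circuit. Indecomposability of $T$ also forces $G$ to be connected: any partition $\mathcal{E} = \mathcal{E}_1 \sqcup \mathcal{E}_2$ into nonempty unions of components produces $T = T_1 + T_2$ with $T_m := \sum_{e \in \mathcal{E}_m} e$ having $\Ha^1$-disjoint supports (they meet only on the finite vertex set) and vanishing boundaries, contradicting indecomposability.

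Finally I traverse a single Eulerian circuit on $G$, concatenating the given arc-length parametrizations of the edges in order, to obtain a map $\sigma: [0, L] \to X$ with $L = \sum_i \Mass(A^{(i)}) + \sum_j \Mass(B^{(j)}) = \Mass(T)$. Consecutive pieces share an endpoint, so $\sigma$ is globally $1$-Lipschitz, and by construction $\sigma_\# \llbracket \chi_{[0,L]}\rrbracket = \sum_i A^{(i)} + \sum_j B^{(j)} = T$. The main technical obstacle is the graph-theoretic step: one must check that the mass- and boundary-mass additivity coming from the two applications of Lemma \ref{le:acyclicDecomp} translates faithfully into the combinatorial statement that indecomposability of $T$ forces both the triviality of $A^{(0)}$ and $B^{(0)}$ and the connectedness of $G$.
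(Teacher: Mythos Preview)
Your approach coincides with the paper's: cut $T$ by a metric sphere around a point of $\set(T)$, apply Lemma~\ref{le:acyclicDecomp} to each piece, and reassemble. The paper's proof is extremely terse and simply says ``apply Lemma~\ref{le:acyclicDecomp} \ldots\ to construct $\sigma$''; your Eulerian-circuit argument spells out this reassembly step explicitly.

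There is, however, one recurring slip in your justifications. You twice invoke ``disjoint supports'' to conclude that masses add: once for $T = A^{(0)} + (T - A^{(0)})$, and once for $T = T_1 + T_2$ when $G$ is disconnected. Neither support claim is correct in general: $A^{(0)}$ and the various $A^{(i)}$ all live in $\{\pi > \epsilon\}$ and their supports can overlap, and edges in different graph components can likewise trace overlapping paths. What you actually need is the mass-additivity clause of Lemma~\ref{le:acyclicDecomp} itself, combined with the fact that $\|T\| = \|T \llcorner \{\pi > \epsilon\}\| + \|T \llcorner \{\pi \leq \epsilon\}\|$ as measures. These give
\[
\Mass(T) = \Mass(A^{(0)}) + \sum_i \Mass(A^{(i)}) + \Mass(B^{(0)}) + \sum_j \Mass(B^{(j)}),
\]
and then subadditivity of mass yields $\Mass(T) = \Mass(A^{(0)}) + \Mass(T - A^{(0)})$, and similarly $\Mass(T) = \Mass(T_1) + \Mass(T_2)$ for the component split. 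With this correction the argument goes through.
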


\begin{proof}
Take a point $p \in \set T$.
Let $\rho: Y \to \R$ be given by $\rho(x) = d(p,x)$. 
Then, for almost every $r$, $T \llcorner B_r(p)$ is an integral current.
For ($\lebmeas^1$-a.e.) small enough $r$, $\partial(T \llcorner B_r(p))$ is nonzero, otherwise $T$ would be decomposable.
For such $r$, we apply Lemma \ref{le:acyclicDecomp} to $T \llcorner B_r(p)$ and $T \llcorner (X \backslash B_r(p))$, to construct $\sigma$.
\end{proof}

\section{Many one-dimensional slices have good connected components}

In this section, we will slice integral currents according to an extension of $n-1$ coordinate functions in charts, to obtain one-dimensional slices.
The main conclusion is, that $\Ha^n$-a.e., locally for a large proportion of such slices, and for a large proportion of pairs of points in the same slice, there exists a component of the slice that connects the two points.

In this section and the next, for $x \in \R^n$ and $r > 0$, $Q_r(x) \subset \R^n$ will stand for the cube centered at $x$ with edge length $2r$.

\begin{lemma}
\label{le:GoodOneDSlices}
Let $Y$ be a $w^*$-separable Banach space and let $T\in I_n(Y)$. 
Let $T$ have the parametrization
\begin{equation}
T = \sum_{\ell=1}^\infty \vartheta_\ell (g_\ell)_\# \llbracket \chi_{K_\ell}  \rrbracket,
\end{equation} 
with $\vartheta_\ell \in \N$, compact sets $K_\ell \subset \R^n$ and Lipschitz maps $g_\ell: \R^n \to Y$, with the properties described in Lemma \ref{le:GoodParametrization}.

Fix an $\ell \in \mathbb{N}$. 
Let $\hat{P}_i:\R^n \to \R^{n-1}$, for any $i=1, \dots, n$, be the orthogonal projection given by
\begin{equation}
\hat{P}_i(x_1, \dots, x_n) := (x_1, \dots, x_{i-1}, x_{i+1} , \dots, x_n),
\end{equation}
and let $W: Y \to \R^{n}$ be a Lipschitz function with $\Lip(W_i) \leq 2 \sqrt{n}$ such that for all $x \in K_\ell$, $x = W(g_\ell(x))$, and define $\hat{W}_i = \hat{P}_i \circ W$. (Such a $W$ exists by (\ref{eq:ComparisonToEuclidean}).)

Given $0 < \delta$, for $\lebmeas^n$-a.e. $x \in K_\ell$, there exists an $r_0$ such that for $\lebmeas^1$-a.e. $0 < r \leq r_0$, 
there exists a compact set $K^r \subset Q_r(x) \cap K_\ell$, with $\Ha^n(K^r) \geq (1 - \delta) (2r)^n$ such that for every $i=1,\dots,n$, and $x_1, x_2 \in K^r$ with $\hat{P}_i(x_1) = \hat{P}_i(x_2)$, with $t_1:= P_i(x_1) \leq P_i(x_2) =:t_2$, 
\begin{equation}
\langle T, \hat{W}_i, \hat{P}_i(x_1) \rangle \llcorner \{ t_1 < W_i \leq t_2\},
\end{equation}
is a one-dimensional integral current concentrated on 
\begin{equation}
\bigcup_\ell g_\ell(G_\ell) \cap \hat{W_i}^{-1}(\hat{P}_i(x_1)) \cap W_i^{-1}([t_1,t_2]), 
\end{equation}
and for all its decompositions (in the sense of Theorem \ref{th:DecompositionOneD}) there are exactly $\vartheta_\ell$ components $\sigma_j:[0,L_j] \to Y$, $j = 1, \dots, \vartheta_\ell$, such that
\begin{equation}
\begin{cases}
\sigma_j(0) = g_\ell( x_1 ), \quad \sigma_j(L_j) = g_\ell(x_2),& \text{if } n-i \text{ even},\\
\sigma_j(0) = g_\ell( x_2 ), \quad \sigma_j(L_j) = g_\ell(x_1),& \text{if } n-i \text{ odd}.
\end{cases}
\end{equation} 
Moreover, for these $\sigma_j$ it holds that $\sigma_j([0,L_j]) \subset B_R(g_1(x))$, where $R := (3 + \sqrt{n})r$.
\end{lemma}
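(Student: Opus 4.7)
My plan is to apply the slicing theorem for integral currents together with Fubini and the Lebesgue density properties guaranteed by Lemma~\ref{le:GoodParametrization}, to construct the set $K^r$ and then analyze the slice through a pair of points in it. Fix $\ell$ and restrict attention to $\lebmeas^n$-a.e.\ $x \in K_\ell$ satisfying simultaneously: $x$ has Lebesgue density $1$ in $K_\ell$; by Corollary~\ref{co:ParamDens} the density of $\|T\|$ at $g_\ell(x)$ is concentrated entirely on $g_\ell(K_\ell)$, so each other piece $g_m(K_m)$ with $m \neq \ell$ contributes zero density there; the metric/$w^*$-differential and comparison bounds of Lemma~\ref{le:GoodParametrization} hold quantitatively at $x$; and for every $i=1,\ldots,n$, the Ambrosio--Kirchheim slicing theorem guarantees $\langle T, \hat W_i, y\rangle \in I_1(Y)$ for $\lebmeas^{n-1}$-a.e.\ $y$, with its restriction under $W_i^{-1}([t_1,t_2])$ remaining integral by standard Lipschitz restriction. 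By Fubini, $\lebmeas^1$-a.e.\ small $r$ is then also good with respect to all the slicing operations inside $Q_r(x)$.

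Next I would define
\[
K^r := \bigl\{x' \in K_\ell \cap Q_r(x) \,:\, \forall i,\ \lebmeas^1\bigl(\hat P_i^{-1}(\hat P_i(x')) \cap Q_r(x) \setminus K_\ell\bigr) = 0\bigr\}.
\]
The central technical point, and the step I expect to be the main obstacle, is the measure bound $\Ha^n(K^r) \geq (1-\delta)(2r)^n$. One starts from $\Ha^n(Q_r(x) \setminus K_\ell) \leq \delta'(2r)^n$, available at density-$1$ points for small $r$, but a direct Fubini bound on the $\Ha^{n-1}$-measure of each shadow $\hat P_i(Q_r(x) \setminus K_\ell)$ is not automatic. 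I would address this by first refining $K_\ell$ further, using Egorov and Lusin applied to $z \mapsto md_z g_\ell$ and to the continuity moduli of (\ref{eq:ComparisonToEuclidean}), to a compact subset on which the geometry is quantitatively flat, and then applying Fubini fiberwise in each axis direction together with a union bound over $i=1,\ldots,n$ upon choosing $\delta'$ much smaller than $\delta/n$.

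Once $K^r$ is in hand, the slicing analysis is routine. Slicing commutes with push-forward and with the sum defining $T$, giving
\[
\langle T, \hat W_i, y\rangle = \sum_m \vartheta_m\,(g_m)_\#\langle \llbracket \chi_{K_m}\rrbracket, \hat W_i \circ g_m, y\rangle.
\]
For $m = \ell$, $\hat W_i \circ g_\ell = \hat P_i$ on $K_\ell$, so this summand is the push-forward under $g_\ell$ of the Euclidean slice $\langle \llbracket \chi_{K_\ell}\rrbracket, \hat P_i, y\rangle$, which (with sign $(-1)^{n-i}$ from the slicing formula) is the $1$-current given by the characteristic function of $K_\ell \cap \hat P_i^{-1}(y)$ on the fiber. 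For $x_1, x_2 \in K^r$ sharing $\hat P_i$-image $y$, the defining property of $K^r$ ensures that the segment between them in the fiber lies in $K_\ell$ up to a $\lebmeas^1$-null set, so the restriction of the $\ell$-summand to $\{t_1 < W_i \leq t_2\}$ equals $(-1)^{n-i}\llbracket \chi_{[t_1,t_2]}\rrbracket$ pushed forward by $g_\ell$ with multiplicity $\vartheta_\ell$. Theorem~\ref{th:DecompositionOneD} then produces exactly $\vartheta_\ell$ Lipschitz curves from $g_\ell(x_1)$ to $g_\ell(x_2)$ (in the order prescribed by the parity of $n-i$), each lying in $g_\ell([x_1,x_2]) \subset B_{\sqrt n\,r}(g_\ell(x)) \subset B_R(g_\ell(x))$ by (\ref{eq:ComparisonToEuclidean}).

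Finally, to obtain the \emph{exact} count, I would argue that no further component connecting $g_\ell(x_1)$ and $g_\ell(x_2)$ can exist in any decomposition. Each summand with $m \neq \ell$ is pushed forward by $g_m$ and is therefore supported in $g_m(K_m)$, which Lemma~\ref{le:GoodParametrization} guarantees is disjoint from $g_\ell(K_\ell)$; in particular neither $g_\ell(x_1)$ nor $g_\ell(x_2)$ can be an endpoint of a component coming from an $m \neq \ell$ summand. Hence any component of any decomposition whose endpoints are $\{g_\ell(x_1), g_\ell(x_2)\}$ must come from the $\ell$-summand, yielding exactly $\vartheta_\ell$ such components.
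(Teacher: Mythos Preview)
There are two genuine gaps.

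\textbf{The set $K^r$ may be far too small.} Your definition requires, for each $x' \in K^r$ and each direction $i$, that the entire fiber $\hat P_i^{-1}(\hat P_i(x'))\cap Q_r(x)$ lie in $K_\ell$ up to $\lebmeas^1$-measure zero. Since $K_\ell$ is closed, this forces the whole segment into $K_\ell$. But a compact set of density one at $x$ can easily miss a positive-measure piece of \emph{every} axis-parallel segment through \emph{every} nearby point (remove from $Q_1$ a sequence of very thin slabs orthogonal to each axis, around a dense set of heights); your $K^r$ is then empty for all small $r$. Neither Egorov/Lusin applied to $md_z g_\ell$ nor any flatness of the chart touches this, because the obstruction is purely about the shape of $K_\ell$, which is fixed by the hypothesis. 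The paper does \emph{not} ask fibres to stay in $K_\ell$. It instead uses the slicing mass estimate to find, for most levels $a$, that $\|\langle T,\hat W_n,a\rangle\|(B_R(g_1(x))\setminus g_1(K_1)) < r$, and that $\partial\langle T,\hat W_n,a\rangle\llcorner B_R(g_1(x))=0$ (this last uses that $\Theta_{n-1}^*(\|\partial T\|,\cdot)$ vanishes $\Ha^n$-a.e.\ on $K_1$, a point you do not invoke). These two facts together give the trapping: any component starting at $g_1(x_1)$ would need mass at least $R-\sqrt n\,r\geq 3r$ to reach $\partial B_R$, but only $2r+r$ of mass is available inside $B_R$.

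\textbf{The decomposition does not split along the summands.} Your final paragraph asserts that any component with endpoints $g_\ell(x_1),g_\ell(x_2)$ ``must come from the $\ell$-summand''. But a decomposition in the sense of Theorem~\ref{th:DecompositionOneD} is a decomposition of the \emph{whole} restricted slice, not of each $(g_m)_\#$-piece separately; a single component can in principle traverse several $g_m(K_m)$. You implicitly rely on the $g_m(K_m)$ being at positive mutual distance, but Lemma~\ref{le:GoodParametrization} only gives pairwise disjointness of a countable family, so $\overline{\bigcup_{m\neq\ell} g_m(K_m)}$ may meet $g_\ell(K_\ell)$. The paper sidesteps this entirely: it computes, via iterated slicing and the vanishing of $\partial\langle T,\hat W_n,a\rangle$ inside $B_R$, that
\[
\partial\bigl(\langle T,\hat W_n,a\rangle\llcorner\{t_1<W_n\leq t_2\}\bigr)\llcorner B_R(g_1(x))=\vartheta_1\,\delta_{g_1(x_2)}-\vartheta_1\,\delta_{g_1(x_1)},
\]
which, combined with the mass-trapping above, forces exactly $\vartheta_1$ components to run from $g_1(x_1)$ to $g_1(x_2)$ and to stay in $B_R$, without any appeal to how the slice splits over the parametrization.
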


\begin{proof}
It suffices to prove the lemma for $i = n$ and $\ell = 1$. 

By the Ambrosio-Kirchheim slicing theorem \cite[Theorem 5.6]{ambrosio_currents_2000}, for $x \in K_1$,
\begin{align}
\int_{\R^{n-1}} \| \langle \partial T, \hat{W}_n, t \rangle \| (B_R(g_1(x))) dt &\leq (2 \sqrt{n})^{n-1}\|\partial T\|(B_R(g_1(x))), \\
\int_{\R^{n-1}} \| \langle T, \hat{W}_n, t \rangle \| (B_R(g_1(x)) \backslash g_1(K_1)) dt & \leq (2 \sqrt{n})^{n-1}\| T \|(B_R(g_1(x)) \backslash g_1(K_1)), \\
\int_{\R^n} \| \langle T, W, t \rangle \| (B_R(g_1(x)) \backslash g_1(K_1) ) dt 
&\leq (2 \sqrt{n})^{n} \|T \|(B_R(g_1(x)) \backslash g_1(K_1)).
\end{align}
Hence, since for $\lebmeas^{n-1}$-a.e. $t \in \R^{n-1}$, $\langle \partial T, \hat{W}_n, t \rangle = (-1)^n \partial \langle T, \hat{W}_n, t \rangle$,
\begin{equation}
\lebmeas^{n-1}\{ t \in \R^{n-1} \, | \, \| \partial \langle T, \hat{W}_n, t \rangle \| (B_R(g_1(x))) \geq 1 \}
\leq (2 \sqrt{n})^{n-1} \|\partial T\|(B_R(g_1(x))),
\end{equation}
and,
\begin{align*}
&\lebmeas^{n-1}\{ t \in \R^{n-1} \, | \, \| \langle T, \hat{W}_n, t \rangle \| (B_R(g_1(x)) \backslash g_1(K_1)) \geq \frac{R}{3 + \sqrt{n}} \} \\
&\qquad \leq  (2 \sqrt{n})^{n-1} \frac{3 + \sqrt{n}}{R} \| T \|(B_R(g_1(x)) \backslash g_1(K_1)), \\
&\lebmeas^n \{ y \in \R^n \, | \, \|\langle T, W, y \rangle \|(B_R(g_1(x)) \backslash g_1(K_1)) \geq 1\}\\
&\qquad \leq (2 \sqrt{n})^{n} \| T \|(B_R(g_1(x)) \backslash g_1(K_1)).
\end{align*}

The ($n-1$)-dimensional upper density $\Theta_{n-1}^*(\|\partial T\|, p)$ vanishes for $\Ha^{n-1}$-a.e. $p \in \set T \backslash \set \partial T$, and in particular, $\Theta_n^*(\|\partial T\|, g_1(x))$ vanishes for $\lebmeas^n$-a.e. $x \in K_1$. 
Moreover, $\Theta_n^*(\|T\| \llcorner (Y \backslash g_1(K_1)), g_1(x))= 0$ for $\lebmeas^n$-a.e. $x \in K_1$. 

It follows from the above and from \cite[Theorems 5.6, 5.7, 5.8]{ambrosio_currents_2000} that for $\lebmeas^n$-a.e. $x \in K_1$, there exists an $r_0$ such that for $\lebmeas^1$-a.e. $r\leq r_0$, there exist compact sets $A_r \subset Q_r(\hat{P}_n(x))$ and $K^r \subset Q_r(x)$ such that 
\begin{itemize}
\item $\hat{P}_n^{-1}(A_r) \cap K^r = K^r$,
\item $\Ha^n(K^r) \geq (1- \delta) (2r)^n$,
\item for every $a \in A_r$, $\langle T, \hat{W}_n, a \rangle$ is in $I_1(Y)$ and is concentrated on 
\begin{equation}
\bigcup_\ell g_\ell(K_\ell) \cap \hat{W_i}^{-1}(a) , 
\end{equation}
with $\partial \langle T, \hat{W}_n, a \rangle \llcorner B_R(g_1(x)) = 0$ and  
\begin{equation}
\label{eq:NotMuchOutsideOneDSlice}
\|\langle T, \hat{W}_n, a \rangle \| (B_R(g_1(x)) \backslash g_1(K_1) ) < r,
\end{equation}
\item for every $y \in K^r$, $\langle T, W, y \rangle$ is in $I_0(Y)$, concentrated on 
\begin{equation}
\bigcup_{\ell} g_k(K_\ell) \cap W^{-1}(y),
\end{equation}
with
\begin{equation}
\begin{split}
\langle T, W, y \rangle 
&= \langle \langle T, \hat{W}_n,\hat{P}_n(y) \rangle, W_n, P_n(y) \rangle \\
&= (\partial \langle T, \hat{W}_n, \hat{P}_n(y) ) \llcorner \{ P_n(y) < W_n \}  \\
&\quad - \partial ( \langle T, \hat{W}_n, \hat{P}_n(y) \llcorner \{ P_n(y) < W_n \} ),
\end{split}
\end{equation}
and
\begin{align}
\| \langle T, W, y \rangle \| (B_R(g_1(x)) \backslash g_1(K_1)) &= 0, \\
\langle T, W, y \rangle \llcorner B_R(g_1(x)) &= \vartheta_1 \delta_{g_1(y)}.
\end{align}
\end{itemize}
Let $a \in A_r$ and $t_1 < t_2$ such that $(a, t_i) \in K^r$, $i = 1, 2$. 
Write for ease of notation
\begin{equation}
T_{t_1, t_2} := \langle T, \hat{W}_n, a \rangle \llcorner \{ t_1 < W_n \leq t_2 \}.
\end{equation}
Note that 
\begin{equation}
\begin{split}
\partial  T_{t_1,t_2}
&= (\partial \langle T, \hat{W}_n, a \rangle ) \llcorner \{ t_1 < W_n \} 
- \langle \langle T, \hat{W}_n, a \rangle, W_n, t_1 \rangle \\
&\quad - (\partial \langle T, \hat{W}_n, a \rangle) \llcorner \{ t_2 < W_n \} 
+ \langle \langle T, \hat{W}_n, a \rangle, W_n, t_2 \rangle,
\end{split}
\end{equation}
so that, since $\partial \langle T, \hat{W}_n, a \rangle \llcorner B_R(g_1(x)) = 0$,
\begin{equation}
\begin{split}
\partial T_{t_1,t_2} \llcorner B_R(g_1(x))
&= - \langle \langle T, \hat{W}_n, a \rangle, W_n, t_1 \rangle \llcorner B_R(g_1(x)) 
	+ \langle \langle T, \hat{W}_n, a \rangle, W_n, t_2 \rangle \llcorner B_R(g_1(x)) \\
&= - \langle T, W, (a,t_1) \rangle \llcorner B_R(g_1(x))
	+ \langle T, W, (a,t_2) \rangle\llcorner B_R(g_1(x)) \\
&= \vartheta_1 \delta_{g_1(x_2)} - \vartheta_1 \delta_{g_1(x_1)}.
\end{split}
\end{equation}

Decomposing $T_{t_1, t_2}$ as in Theorem $\ref{th:DecompositionOneD}$, we find that there are Lipschitz maps $\sigma_j:[0,L_j]\to Y$, 
with $\Lip(\sigma_j) \leq 1$, 
and $T_j \in I_1(X)$ ($j=1,2, \dots$) given by
\begin{equation}
T_j := (\sigma_j)_\# \llbracket \chi_{[0,L_j]} \rrbracket,
\end{equation}
which are indecomposable and
\begin{equation}
T_{t_1, t_2} = \sum_{j=1}^\infty T_j, 
\qquad \Mass(T_{t_1, t_2}) = \sum_{j=1}^\infty \Mass(T_j), 
\qquad \Mass(\partial T_{t_1, t_2}) = \sum_{j=1}^\infty \Mass(\partial T_j).
\end{equation}
Without loss of generality, we may assume that $\sigma_j (0) = g_1(x_1)$ for all $j = 1, \dots, n$. 
We will now show that for these values of $j$, $\sigma_j([0,L_j]) \subset B_R(g_1(x))$. 
Indeed, because 
\begin{equation}
\partial \langle T, \hat{W}_n, a \rangle \llcorner B_R(g_1(x)) = 0,
\end{equation}
if  $\sigma_j$ touches $\partial B_R(g_1(x))$, necessarily
\begin{equation}
\Mass((\sigma_j)_\# \llbracket \chi_{[0,L_j]} \rrbracket ) 
\geq R - d(g_1(x_1), g_1(x) ) 
\geq R - \| x_1 - x \| \geq R - \sqrt{n} r \geq 3 r,
\end{equation}
where we used (\ref{eq:ComparisonToEuclidean}).
However, again by (\ref{eq:ComparisonToEuclidean}) and the area formula,
\begin{equation}
\| T_{t_1,t_2} \| (g_1(K_1)) \leq t_2 - t_1 \leq 2r,
\end{equation}
and by (\ref{eq:NotMuchOutsideOneDSlice}),
\begin{equation}
\| T_{t_1,t_2} \| (B_R(g_1(x)) \backslash g_1(K_1)) < r.
\end{equation}
Since $\sigma_j([0,L_j]) \subset B_R(g_1(x))$, necessarily $\sigma_j(L_j) = g_1(x_2)$.
\end{proof}

\section{A Poincar\'{e}-like inequality}
\label{se:Poincare}

In Section \ref{se:DefSobolev}, we showed that any function in $W^{1,2}(\|T\|)$ is tangentially differentiable $\|T\|$-a.e.. 
The main ingredient is a Poincar\'{e}-like inequality, that we will prove in this section.

In the Poincar\'{e}-like inequality below, we will use the notation
\begin{equation}
(f)_G := \frac{1}{\Ha^k(G) }\int_G f d\Ha^{k},
\end{equation}
for an integrable function $f$ and a Borel set $G \subset \R^n$, where the appropriate integer $k$ will be apparent from the context (i.e., $k$ such that $0 < \Ha^k (G)< \infty$).

\begin{theorem}
\label{th:PoincareLike}
Let $Y$ be a $w^*$-separable Banach space and let $T \in I_n(Y)$. Let $T$ have the parametrization
\begin{equation}
T = \sum_{\ell=1}^\infty \vartheta_\ell (g_\ell)_\# \llbracket \chi_{K_\ell} \rrbracket,
\end{equation} 
with $\vartheta_\ell \in \N$, compact $K_\ell \subset \R^n$, and Lipschitz maps $g_\ell : \R^n \to Y$, with the properties described in Lemma \ref{le:GoodParametrization}.

Then, for almost every $x \in K_1$, for small enough $r_0>0$, for almost every $r \leq r_0$, there exists a compact set
$G \subset K_1 \cap Q_r(x) $ such that $\Ha^n(G) \geq (1-\delta) (2r)^n$ and for all Lipschitz $f:\supp(T) \to \R$,
\begin{equation}
\frac{1}{\Ha^n(G)}\int_G | (g_1^\# f) - ( g_1^\# f)_G | d \Ha^n 
	\leq \frac{C(n) r} {\|T\|(B_R(g_1(x)))}\int_{B_R(g_1(x))} |df| d\|T\|,
\end{equation}
with $R := (3 + \sqrt{n})r$.
\end{theorem}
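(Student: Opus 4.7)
My plan is to prove the Poincar\'e-like inequality by telescoping along coordinate-axis-aligned paths. I express $u(y_1) - u(y_2)$ with $u := g_1^\# f$ as a sum of differences at points differing in a single coordinate, use Lemma \ref{le:GoodOneDSlices} to realize each such difference as an integral of $|df|$ along a curve inside $B_R(g_1(x))$, and then apply the Ambrosio--Kirchheim slicing formula to convert the resulting double integral over $G \times G$ into an integral against $\|T\|$.

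First I would apply Lemma \ref{le:GoodOneDSlices} simultaneously in all $n$ coordinate directions (intersecting the compact sets) to obtain, for $\lebmeas^n$-a.e.\ $x \in K_1$ and $\lebmeas^1$-a.e.\ small $r$, a compact set $K^r \subset K_1 \cap Q_r(x)$ with $\Ha^n(K^r) \geq (1 - \delta_1)(2r)^n$ (for an auxiliary $\delta_1 \ll \delta$ to be chosen), such that for every direction $i$ and every $y, y' \in K^r$ with $\hat{P}_i(y) = \hat{P}_i(y')$, there are $\vartheta_1$ curves of Lipschitz constant at most $1$ inside $B_R(g_1(x))$ connecting $g_1(y)$ to $g_1(y')$, jointly parametrizing portions of the one-dimensional slice $T^{(i)}_{t, t'} := \langle T, \hat{W}_i, \hat{P}_i(y) \rangle \llcorner \{\min(t, t') < W_i \leq \max(t, t')\}$, where $t = y^i$, $t' = {y'}^i$. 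The Lipschitz bound gives
\[
|u(y) - u(y')| \leq \frac{1}{\vartheta_1} \sum_{j=1}^{\vartheta_1} \int_{\sigma_j} |df| \, d\Ha^1 \leq \frac{1}{\vartheta_1} \int_{B_R(g_1(x))} |df| \, d\|T^{(i)}_{t,t'}\|.
\]

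To chain $y_1, y_2 \in K^r$ I set $z_k := (y_2^1, \ldots, y_2^k, y_1^{k+1}, \ldots, y_1^n)$ and call the pair \emph{admissible} when all $z_k$ lie in $K^r$. A standard Fubini--Chebyshev argument produces $G \subset K^r$ with $\Ha^n(G) \geq (1 - \delta)(2r)^n$ such that for every $y \in G$ the non-admissible partners form a set of measure at most $\sqrt{\delta_1} (2r)^n$; then for every $y_1, y_2 \in G$ the set of auxiliary $w \in K^r$ for which both $(y_1, w)$ and $(w, y_2)$ are admissible has measure at least $\tfrac12 (2r)^n$, and non-admissible pairs are handled by averaging the triangle bound $|u(y_1) - u(y_2)| \leq |u(y_1) - u(w)| + |u(w) - u(y_2)|$ over such $w$. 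Summing the telescoping estimate over $k$, integrating over the admissible pairs in $G \times G$, and applying Fubini separates (i) the $n-1$ "free" non-$k$ coordinates, contributing a factor $(2r)^{n-1}$; (ii) the two $k$-th coordinates, whose joint integration of $\chi_{W_k(p) \in [\min(t,t'), \max(t,t')]}$ is bounded by $2 r^2 \chi_{W_k(p) \in Q_r(x^k)}$; and (iii) the transverse slice parameter $a_k \in \R^{n-1}$, controlled by the slicing formula with $\Lip(\hat{W}_k) \leq 2\sqrt{n}$. The net bound is
\[
\iint_{G \times G} |u(y_1) - u(y_2)| \, dy_1 \, dy_2 \leq C(n) \, \frac{r^{n+1}}{\vartheta_1} \int_{B_R(g_1(x))} |df| \, d\|T\|.
\]

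Dividing by $\Ha^n(G)^2 \geq ((1-\delta)(2r)^n)^2$ and using the standard inequality $\int_G |u - (u)_G| \leq \Ha^n(G)^{-1} \iint_{G \times G} |u(y_1) - u(y_2)|$ yields a bound proportional to $(r / (r^n \vartheta_1)) \int_{B_R} |df| \, d\|T\|$. The desired final form follows from the upper-density estimate $\|T\|(B_R(g_1(x))) \leq C(n) \vartheta_1 r^n$, which holds for $r$ small because $\Theta_n(\|T\|, g_1(x)) = \vartheta_1 \lambda_{\Tan(\set T, g_1(x))}$ is a.e.\ finite by Corollary \ref{co:ParamDens}. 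The main obstacle will be the measure-theoretic construction of $G$ and the uniform handling of non-admissible pairs through an auxiliary point $w$, which requires $\delta_1 \ll \delta$ and some care with the dimensional constant $C(n)$ arising from steps (i)--(iii); a secondary difficulty is to keep all curves and slice restrictions inside $B_R(g_1(x))$ throughout, so that the right-hand side only collects $|df| \, d\|T\|$ over $B_R$.
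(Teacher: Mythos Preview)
Your approach is correct and genuinely different from the paper's proof. The paper proceeds by an induction on slice codimension: it invokes a combinatorial lemma (Lemma~\ref{le:GoodCuts}) to produce a nested tower of compact sets $A^k \subset [0,1]^k$ such that every fiber $\{a\}\times\R^{n-k}$ over $a\in A^k$ meets $A^n$ in nearly full measure, and then shows that a Poincar\'e inequality on the $(n-k)$-dimensional slices $\langle T, W^k, a\rangle$ implies one on the $(n-k+1)$-dimensional slices, with the base case $k=n-1$ given directly by the curves from Lemma~\ref{le:GoodOneDSlices}. Your route bypasses both the induction and Lemma~\ref{le:GoodCuts}: you telescope along the $n$ coordinate axes at once, control the set of pairs $(y_1,y_2)$ whose intermediate corner points $z_k$ all lie in $K^r$ by a Fubini--Chebyshev argument (the measure of bad pairs is at most $(n-1)\delta_1(2r)^{2n}$ because each constraint $z_k\notin K^r$ freezes exactly $n$ of the $2n$ coordinates), and repair the remaining pairs with an auxiliary midpoint $w$. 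The subsequent change of variables $(y_1,y_2)\mapsto(a_k,t,t',\text{free})$ and the slicing estimate $\int_{\R^{n-1}}\|\langle T,\hat W_k,a\rangle\|\,da\le (2\sqrt n)^{n-1}\|T\|$ are exactly the ingredients the paper uses, just organized differently.

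What each approach buys: yours is closer in spirit to the textbook proof of the Poincar\'e inequality on a cube in $\R^n$ and avoids the somewhat delicate nested-fiber construction of Lemma~\ref{le:GoodCuts}; it also makes the role of the multiplicity $\vartheta_1$ explicit (by averaging over the $\vartheta_1$ curves of Lemma~\ref{le:GoodOneDSlices} before invoking the density bound $\|T\|(B_R)\le C(n)\vartheta_1 r^n$). The paper's inductive scheme, on the other hand, gives a deterministic path between any two points of $G=A^n$ (every intermediate point is in $A^n$ by construction), so no auxiliary-point averaging is needed, and the intermediate inequalities~(\ref{eq:PoincareInduction}) on all slice dimensions may be of independent interest. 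Two small points to watch when you write it out: Lemma~\ref{le:GoodOneDSlices} already delivers a single $K^r$ valid for all directions $i$, so no intersection is needed; and make sure your definition of $G$ controls both $\{w:(y,w)\text{ not admissible}\}$ and $\{w:(w,y)\text{ not admissible}\}$, so that the midpoint set has the claimed lower bound for every pair in $G\times G$.
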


\begin{proof}
First, we fix a $\delta > 0$ and apply Lemma \ref{le:GoodOneDSlices} (with $\delta$ replaced by $\delta^{n+1}$), to obtain a set $K^r$ with the properties mentioned in the lemma, in particular $\Ha^n(K^r) > (1-\delta^{n+1}) (2r)^n$. 

Let $P$ and $W$ be defined as in Lemma \ref{le:GoodOneDSlices}. 
We additionally introduce the notations 
\begin{equation}
P^k = (P_1, \dots, P_k), \qquad \underline{P}^k = (P_{k+1}, \dots, P_n),
\end{equation}
and will use the corresponding definitions for $W^k$ and $\underline{W}^k$.
By the Ambrosio-Kirccheim slicing theorems \cite[Theorems 5.6, 5.7, 5.8]{ambrosio_currents_2000} and the regularity of the Lebesgue-measure, we may without loss of generality assume that for \emph{all} $x \in K_r$, and every $k=1, \dots, n$, $\langle T, W^{k}, P^{k}(x) \rangle \in I_{n - k}(Y)$, while for $k = 2, \dots, n$,
\begin{itemize}
\item for $\lebmeas^1$-a.e. $t \in \R$,
\begin{equation}
\langle \langle T, W^{k-1}, P^{k-1}(x) \rangle, W_k, t \rangle = \langle T, W^k, (P^{k-1}(x),t) \rangle,
\end{equation}
\item for $\lebmeas^{n-k}$-a.e. $b \in R^{n-k}$,
\begin{equation}
\langle \langle T, W^{k-1}, P^{k-1}(x) \rangle, \underline{W}^k , b \rangle = \langle T, \hat{W}_k, (P^{k-1}(x), b) \rangle,
\end{equation}
\item the following two iterated-slicing equalities hold
\begin{align}
\langle \langle T, W^{k-1}, P^{k-1}(x) \rangle, W_k, P_k(x) \rangle 
&= \langle T, W^k, P^k(x) \rangle,\\
\langle \langle T, W^{k-1}, P^{k-1}(x)\rangle, \underline{W}^k, \underline{P}^k(x) \rangle 
&= \langle T, \hat{W}^k, \hat{P}_k(x) \rangle.
\end{align}
\end{itemize}
We next apply Lemma \ref{le:GoodCuts} to (a scaled version of) $K^r$. 
The conclusion of that lemma is that there exist compact sets $A^k \subset [0,1]^{k}$, ($k=1, \dots, n$), with $A^n \subset K^r$ such that for $k=1, \dots, n$, $\Ha^k(A^k) > (1 - \delta) (2r)^k$, $(A^k \times \R^{n-k} )\cap A^n = A^n$ and for all $x \in A^k$, it holds that
\begin{equation}
\Ha^{n-k} ( (\{ x \} \times \R^{n-k}) \cap A^n  ) > (1 - \delta)(2r)^{n-k}.
\end{equation}

We will set $A^0 = \{0\}$, and we will use the convention that $\{0 \} \times \R^{n} = \R^{n}$, and $T =\langle T, W^0, 0 \rangle$.

To keep notation concise, we denote for $a \in A^{k}$,
\begin{equation}
K(a) := ( \{ a\} \times \R^{n-k} ) \cap A^n.
\end{equation}

We will show that for $k = 0, \dots, n-1$, for $a \in A^{k}$, for all $f$ Lipschitz on $\supp T$, with $\tilde{f} = g_1^\# f = g_1 \circ f$, it holds that
\begin{equation}
\label{eq:PoincareInduction}
\int_{K(a)} | \tilde{f} - (\tilde{f})_{K(a)} |d\Ha^{n-k} 
\leq C(n,k) r \int_{B_R(g_1(x))} | df | d\| \langle T, W^k, a \rangle \|.
\end{equation}
To be precise, by $|df|$ we mean the dual norm of the tangential derivative of $f$, as $f$ is restricted to $\set (\langle T, W^k, a \rangle )$.
We will show that (\ref{eq:PoincareInduction}) holds by induction on $k$, starting at $k = n-1$. 

Let $k=n-1$. Let $a \in A^{n-1}$, and let $t_1 < t_2$ be such that $(a,t_i) \in A^n$ for $i=1,2$. 
Then, since $A^n \subset K^r$, it follows that there is a $\sigma:[0,L] \to Y$ with $\sigma(0) = g_1((a,t_1))$, $\sigma(L) = g_1((a,t_2))$, $\sigma([0,L]) \subset B_R(g_1(x))$, and $\sigma([0,L])$ concentrated on a subset of $\cup_\ell g_\ell(K_\ell)$. 
In particular, $f$ is defined on the image of $\sigma$.
Consequently,
\begin{equation}
\label{eq:DetailedkOne}
\begin{split}
|\tilde{f}(a,t_2) - \tilde{f}(a,t_1)| 
& = | f(g_1((a,t_2))) -  f(g_1((a,t_1))) | \\
& = | f(\sigma(L)) - f(\sigma(0)) | \\
& \leq \int_0^L |(f \circ \sigma)' (s)| ds \\
& \leq \int_0^L | df | |\sigma'(s)| ds \\
& \leq \int_{B_R(g_1(x))} |df| d \|T, \hat{W}_n, a \|.
\end{split}
\end{equation}
By construction, $K(a) \geq (1 - \delta) 2r$.
We apply Jensen's inequality to conclude
\begin{equation}
\begin{split}
|\tilde{f}(a,t_1) - (\tilde{f})_{K(a)}|
&\leq \frac{1}{\Ha^1(K(a))} \int_{t_2: (a,t_2) \in K(a)} |\tilde{f}(a,t_1) - \tilde{f}(a,t_2)| dt_2 \\
&\leq \frac{2r}{(1 - \delta) 2r} \int_{B_R(g_1(x))} |df| d \| \langle T, \hat{W}_n, a \rangle \|.
\end{split}
\end{equation}
It follows that
\begin{equation}
\int_{K(a)} |\tilde{f} - (\tilde{f})_{K(a)}| d \Ha^{1} \leq \frac{2r}{1 - \delta} \int_{B_R(g_1(x))} |df| d \|\langle T, \hat{W}_n, a\rangle \|,
\end{equation}
or in other words, that (\ref{eq:PoincareInduction}) for $k=n-1$ with $C(n,n-1) = 4$.

Let the inequality (\ref{eq:PoincareInduction}) be shown for $k$.
We will show the inequality holds for $k-1$.

Let $a \in A^{k-1}$. 
We first estimate as follows
\begin{equation}
\label{eq:PoinStartSplit}
\begin{split}
\int_{K(a)}& |\tilde{f} - (\tilde{f})_{K(a)}| d\Ha^{n-k+1} \\
& \leq \int_{t:(a,t) \in A^{k}} \int_{K(a,t)} |\tilde{f} - (\tilde{f})_{K(a,t)}| d\Ha^{n-k} d\Ha^1(t) \\
& \quad + \int_{t:(a,t) \in A^{k}} \int_{K(a,t)} |(\tilde{f})_{K(a,t)} - (\tilde{f})_{K(a)}| d\Ha^{n-k} d\Ha^1(t).
\end{split}
\end{equation}
We can bound the first term by using the induction hypothesis
\begin{equation}
\label{eq:PoinEasyTerm}
\begin{split}
& \int_{t:(a,t) \in A^{k}} \int_{K(a,t)} |\tilde{f} - (\tilde{f})_{K(a,t)}| d\Ha^{n-k} d\Ha^1(t)\\
& \leq C(n,k) r\int_{t:(a,t) \in A^{k}} \int_{B_R(g_1(x))} |d f| d \|\langle T, W^{k}, (a, t) \rangle \| dt \\ 
& = C(n,k) r\int_{t:(a,t) \in A^{k}} \int_{B_R(g_1(x))} |d f| d \|\langle \langle T, W^{k-1}, a \rangle, W_k, t \rangle \| dt \\ 
& \leq 2 \sqrt{n} \, C(n,k) r \int_{B_R(g_1(x))} |d f| d \|\langle T, W^{k-1}, a \rangle \|.
\end{split}
\end{equation}
(Technically, the meaning of $|df|$ changed (and in general became larger $\|\langle T, W^{k-1}, a\rangle\|$-a.e.) in the last step of the previous estimate.)

Let $t_1, t_2 \in [x_{n-k} - r, x_{n-k} + r]$, such that
\[
a^i := (a,t_i) = (a_1, \dots, a_{n-k-1}, t_i) \in A^{k}, \qquad i=1,2.
\]
By the induction hypothesis,
\begin{equation}
\label{eq:PoincareByIndHyp}
\int_{K(a^i)} | \tilde{f}- (\tilde{f})_{K((a,t_i))} | d\Ha^{n-k} 
\leq C(n,k) r \int_{B_R(g_1(x))} |d f| d\| \langle T, W^{k}, (a,t_i) \rangle \|.
\end{equation}
Since $(a,t_i) \in A^{k}$, there exists a compact set $B \subset \R^{n-k}$, with $\Ha^{n-k}(B) \geq (1 - 2 \delta)(2r)^{n-k}$ such that for $(b_1, \dots, b_{n-k}) \in B$,
\begin{equation}
(a_1, \dots, a_{k-1},t_i,b_1, \dots, b_{n-k}) \in A^n, \qquad i = 1,2.
\end{equation}
We apply Jensen's inequality and use (\ref{eq:PoincareByIndHyp}) to estimate
\begin{equation}
\label{eq:AvRestrClose}
\begin{split}
| (\tilde{f})_{\{(a,t_i)\} \times B} - (\tilde{f})_{K((a,t_i))} | 
& \leq \frac{1}{\Ha^{n-k}(B)} \int_{\{(a,t_i)\} \times B} | \tilde{f} - (\tilde{f})_{K((a,t_i))}| d \Ha^{n-k} \\
& \leq \frac{C(n,k) r}{(1-2\delta)(2 r)^{n-k}} \int_{B_R(g_1(x))} |df| d\|\langle T,  W^{k}, (a,t_i) \rangle \|.
\end{split}
\end{equation}
Moreover, let 
\begin{equation}
(a^i, b) := (a_1, \dots, a_{n-k-1}, t_i, b_1, \dots, b_k) \in A^n,
\end{equation}
then there exists a component $\sigma:[0,L] \to Y$ of 
\begin{equation}
\langle T, \hat{W}_{k}, (a,b) \rangle  \llcorner \{ t_1 < W_{k} \leq t_2 \},
\end{equation}
such that $\Lip(\sigma) \leq 1$, 
\begin{equation}
\begin{cases}
\sigma(0) = (a^1,b), \quad \sigma(L) = (a^2,b), & \text{if } n - k \text{ even},\\
\sigma(0) = (a^2, b), \quad \sigma(L) = (a^1, b), &\text{if } n - k \text{ odd},
\end{cases}
\end{equation}
and $\sigma([0,L]) \subset B_R(g_1(x))$, and therefore
\begin{equation}
\begin{split}
|\tilde{f}((a^2,b)) - \tilde{f}((a^1,b))| 
& \leq \int_{B_R(g_1(x))}  |df| d\|\sigma_\# \llbracket \chi_{[0,L]} \rrbracket \| \\
& \leq \int_{B_R(g_1(x))}  |df| d\| \langle T, \hat{W}_{k}, (a,b) \rangle \|.
\end{split}
\end{equation}
We integrate this inequality to obtain
\begin{equation}
\begin{split}
|(f)_{\{a^1\} \times B} - (f)_{\{a^2\} \times B}| 
& \leq \frac{ 1 }{\Ha^k (B)} \int_{B} \int_{B_R(g_1(x))} |df| d\| \langle T, \hat{W}_{k}, (a,b) \rangle  \| d\Ha^{n-k} \\
& \leq \frac{ (2\sqrt{n})^{n-k} }{(1- 2 \delta) (2r)^{n-k}} \int_{B_R(g_1(x))} |d f | d \| \langle T, W^{k-1}, a \rangle\|.
\end{split}
\end{equation}
We combine this with (\ref{eq:AvRestrClose}) and use the triangle inequality to obtain
\begin{equation}
\begin{split}
|(\tilde{f})_{K(a^1)} - (\tilde{f})_{K(a^2)}| 
&\leq \sum_{i=1,2} \frac{C(n,k) r}{(1-2 \delta)(2r)^{n-k}}\int_{B_R(g_1(x))} |df|  d\|\langle T, W^{k}, (a,t_i) \rangle\| \\
& \quad + \frac{(2 \sqrt{n})^{n-k}}{(1-2\delta)(2r)^{n-k}} \int_{B_R(g_1(x))} |df| d\|\langle T, W^{k-1}, a \rangle\|.
\end{split}
\end{equation}
It follows, again by Jensen's inequality, that 
\begin{equation}
\begin{split}
&|(\tilde{f})_{K((a,t_1))} - (\tilde{f})_{K(a)}| \\
&= \frac{1}{\Ha^{n-k+1}( K(a) ) } \left| \int_{t_2} \left( (\tilde{f})_{K((a,t_1))} - (\tilde{f})_{K((a,t_2))} \right)\Ha^{n-k}(K((a,t_2)) ) dt_2 \right| \\
&\leq \frac{1}{(1 - \delta) (2 r) } \int_{t_2 : (a,t_2) \in A^{n-k} } \left| (\tilde{f})_{K((a,t_1))} - (\tilde{f})_{K((a,t_2))} \right| dt_2.
\end{split}
\end{equation}
Since $\Ha^k(K(a,t_2)) \neq 0$ if and only if $(a,t_2) \in A^{n-k}$,
\begin{equation}
\begin{split}
& \int_{t_1 : (a,t_1) \in A^{n-k} } \int_{K(a,t_1)} |(\tilde{f})_{K((a,t_1))} - (\tilde{f})_{K(a)}| d \Ha^{n-k} dt_1 \\
& \leq (2r)^{n-k} \int_{t_1 : (a,t_1) \in A^{n-k} } |(\tilde{f})_{K((a,t_1))} - (\tilde{f})_{K(a)}| dt_1 \\
& \leq \frac{2r (2 \sqrt{n})^{n-k}}{(1- 2 \delta)(1- \delta)} \int_{B_R(g_1(x))} |df| d \| \langle T, W^{k-1}, a \rangle \| \\
& \quad + \frac{2 r C(n,k) 2 \sqrt{n}}{ (1 - 2\delta) (1-\delta) } \int_{B_R(g_1(x))} |df| d \| \langle T, W^{k-1}, a \rangle \|.
\end{split}
\end{equation}
This concludes the estimate of the second term on the right-hand side of (\ref{eq:PoinStartSplit}). 
Together with (\ref{eq:PoinEasyTerm}), we conclude that indeed
\begin{equation}
\int_{K(a)} | \tilde{f} - (\tilde{f})_{K(a)} |d\Ha^{n-k+1} 
\leq C(n,k-1) r \int_{B_R(p)} | df | d\| \langle T, W^{k-1}, a \rangle \|,
\end{equation}
with $C(n,k-1) = 4 (2\sqrt{n}) C(n,k) + 4 (2 \sqrt{n})^{n-k}$.
\end{proof}

\begin{lemma}
\label{le:GoodCuts}
Let $K \subset [0,1]^n$ be a compact set such that $|K| > 1 - \epsilon$. 
Let $\delta > 0$.
Then there exist compact sets $A^k \subset [0,1]^{k}$, for $k=1, \dots, n$, such that $A^n \subset K$, 
and for all $k = 1, \dots, n$, $\Ha^{k}(A^{k}) > 1 - \epsilon / \delta^n$, and
\begin{equation}
(A^{k} \times \R^{n-k}) \cap A^n = A^n,
\end{equation}
and for all $x \in A^{k}$,
\begin{equation}
\Ha^{n-k}  ((\{ x \} \times \R^{n-k} ) \cap A^n ) > 1 - \delta.
\end{equation}
\end{lemma}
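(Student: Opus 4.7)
The plan is to build the sets $A^k$ in two passes: a top-down trimming of $K$ to ensure good slicing, followed by a bottom-up projection-consistency adjustment.

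Top-down trimming. Set $F_n := K$ and iteratively, for $k = n-1, n-2, \dots, 1$, define
\[
D_k := \{x \in [0,1]^k : \Ha^{n-k}((\{x\}\times\R^{n-k}) \cap F_{k+1}) > 1 - \delta\}, \qquad F_k := F_{k+1} \cap (D_k \times \R^{n-k}).
\]
By Fubini and Markov's inequality, $|[0,1]^k \setminus D_k| \leq \delta^{-1}|[0,1]^n \setminus F_{k+1}|$. Setting $\epsilon_k := |[0,1]^n \setminus F_k|$ gives the recursion $\epsilon_k \leq \epsilon_{k+1}(1+1/\delta)$, so $\epsilon_1$ and each $|[0,1]^k \setminus D_k|$ are controlled (up to combinatorial constants depending on $n$) by $\epsilon/\delta^n$.

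Bottom-up consistency. Set $A^1 := D_1$ and, for $k = 2, \dots, n-1$, define
\[
A^k := \{(x_1, \dots, x_k) \in D_k : (x_1, \dots, x_{k-1}) \in A^{k-1}\},
\]
and finally $A^n := F_1$. A telescoping bound $|[0,1]^k \setminus A^k| \leq \sum_{j=1}^k |[0,1]^j \setminus D_j|$ then yields the required measure estimate for each $A^k$. Because $F_1 \subset F_{k+1} \cap (D_k \times \R^{n-k})$ for every $k$, iteratively $\pi_k(F_1) \subset D_j$ for all $j \leq k$, hence $\pi_k(A^n) \subset A^k$, which gives the nesting $A^n \subset A^k \times \R^{n-k}$.

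For the slice condition, pick $x \in A^k$. By construction $\pi_j(x) \in D_j$ for every $j \leq k$, so $\{x\}\times\R^{n-k} \subset \bigcap_{j=1}^k (D_j \times \R^{n-j})$. Since $A^n = F_{k+1} \cap \bigcap_{j=1}^k (D_j \times \R^{n-j})$, we conclude
\[
(\{x\}\times\R^{n-k}) \cap A^n \;=\; (\{x\}\times\R^{n-k}) \cap F_{k+1},
\]
which has $\Ha^{n-k}$-measure strictly greater than $1 - \delta$ precisely because $x \in D_k$. Finally, since the slice-measure functions are only Borel measurable, the sets $D_k$ need not be closed, but replacing each by a compact inner approximation (at the cost of a negligible loss of measure, absorbed into the constant) produces the required compact $A^k$.

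The main bookkeeping obstacle is the interplay between the two passes: the top-down step is essential to get a quantitative slice bound in terms of the previously trimmed $F_{k+1}$, whereas the bottom-up step is necessary so that $\{x\}\times\R^{n-k}$ sits inside every lower-level cylinder $D_j \times \R^{n-j}$, and hence the $F_{k+1}$-slice agrees with the $A^n$-slice. Getting both the nesting $A^n \subset A^k \times \R^{n-k}$ and the slice lower bound simultaneously, while accumulating the Markov losses no worse than $\epsilon/\delta^n$, is the delicate point.
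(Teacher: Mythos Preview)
Your two-pass construction (top-down trimming by Markov, then bottom-up intersection to enforce projection consistency) is exactly the paper's strategy; the paper's sets $A_k$, $K_k$, and $A^k$ correspond to your $D_k$, $F_k$, and $A^k$. Two small refinements recover the stated constant: first, in the recursion you may sharpen $\epsilon_k \leq \epsilon_{k+1}(1+1/\delta)$ to $\epsilon_k \leq \epsilon_{k+1}/\delta$ by observing that the mass of $F_{k+1}$ removed over $[0,1]^k\setminus D_k$ is at most $(1-\delta)\,\epsilon_{k+1}/\delta$ (the slices there are small by definition), and second, the compact approximation of $D_k$ should be made \emph{during} the top-down pass---the paper does this by replacing the bad set with a slightly larger open set---so that the subsequent $F_k$ and $D_{k-1}$ are defined relative to the already-compact choice and the final slice identity $(\{x\}\times\R^{n-k})\cap A^n = (\{x\}\times\R^{n-k})\cap F_{k+1}$ remains valid.
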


\begin{proof}
First, we will inductively define compact sets $A_{k} \subset [0,1]^{k}$ (not necessarily equal to the $A^k$), for $k = 1,\dots,n$, starting with $A_n := K$. 
We also set $K_n := K$, 
\begin{equation}
\begin{split}
K_{k} &:= K_{k+1} \cap (A_k \times \R^{n-k}) \\
&= K \cap (A_{n-1} \times \R^1) \cap (A_{n-2} \times \R^2) \cap \dots \cap (A_k \times \R^{n-k}),
\end{split}
\end{equation}
At the same time, we will show that $\Ha^{k}(A_{k}) > 1 - \epsilon / \delta^{n-k}$ and $\Ha^{n}(K_{k}) > 1 - \epsilon /  \delta^{n - k}$.

Let $A_n = [0,1]^n$ and $K_n = K$.
With $A_{k+1}$ defined, we proceed to define $A_k$ as follows.
Let 
\begin{equation}
\tilde{B}_{k} := \{ x \in [0,1]^{k} \, | \, \Ha^{n-k}( (\{x\} \times \R^{n-k}) \cap K_{k+1} ) \leq 1 - \delta \}.
\end{equation}
Since $\Ha^{n}( K_{k+1} ) > 1 - \epsilon / \delta^{n-k-1}$, by Fubini's Theorem,
\begin{equation}
(1 - \delta)\Ha^{k}(\tilde{B}_{k})+ (1 - \Ha^{k}(\tilde{B}_{k}))  > 1 - \frac{\epsilon}{\delta^{n-k-1}},
\end{equation}
such that $\Ha^{k}(\tilde{B}_{k}) < \epsilon / \delta^{n-k}$.
Select a relatively open $B_{k} \subset [0,1]^{k}$ such that $\tilde{B}_{k} \subset B_{k}$ and 
\begin{equation}
\Ha^{k}(B_{k}) < \frac{\epsilon}{\delta^{n-k}}, \qquad \Ha^n(K_{k+1}) -  \Ha^{k}(B_{k}\backslash \tilde{B}_{k}) > 1 - \frac{\epsilon}{ \delta^{n-k-1} }.
\end{equation}
Put $A_{k} = [0,1]^{k} \backslash B_{k}$. 
Note that $A_{k}$ is compact. 
Set $K_{k} :=  K_{k+1} \cap (A_{k} \times \R^{n-k})$. 
Note that 
\begin{equation}
\begin{split}
\Ha^{n}(K_{k}) 
&> \Ha^{n}(K_{k+1}) - (1 - \delta) \Ha^{k}(\tilde{B}_{k}) -  \Ha^{k}(B_{k}\backslash \tilde{B}_{k}) \\
&> 1 - \frac{\epsilon}{\delta^{n-k-1}} - (1 - \delta)\frac{\epsilon}{\delta^{n-k}}\\
&= 1 - \frac{\epsilon}{\delta^{n-k}}.
\end{split}
\end{equation}
Now define $A^k \subset [0,1]^k$ by $A^n := K_1$ and 
\begin{equation}
A^{k} := (A_1 \times \R^{k-1}) \cap (A_2 \times \R^{k-2} ) \cap  \dots \cap A_{k},
\end{equation}
for $k = 1, \dots, n-1$. We find that by the definition of $A^n$,
\begin{equation}
(A^{k} \times \R^{n-k}) \cap A^n = A^n,
\end{equation}
so that by Fubini
\begin{equation}
\Ha^{k}(A^{k}) \geq \Ha^n (K_1)  > 1 - \epsilon/ \delta^n.
\end{equation}
If $x \in A^{k}$, then 
\begin{equation}
\begin{split}
(\{ x \} \times \R^{n-k}) \cap A^n &= (\{ x \} \times \R^{n-k}) \cap (A^k \times \R^{n-k}) \cap K_{k} \\
& = (\{ x \} \times \R^{n-k}) \cap K_k \\
& > 1 - \delta.
\end{split}
\end{equation}

\end{proof}

\bibliography{IFEVrefCVPDE}
\bibliographystyle{spmpsci} 

\end{document}